\newcommand{\aside}[1]{\marginnote{\scriptsize{#1}}[0cm]}
\newcommand{\aaside}[2]{\marginnote{\scriptsize{#1}}[#2]}
\newcommand\Emph[1]{\emph{#1}\aside{#1}}
\newcommand\EmphE[2]{\emph{#1}\aaside{#1}{#2}}
\newtheorem{lem}{Lemma}
\newtheorem{thm}[lem]{Theorem}
\newtheorem{clm}[lem]{Claim}
\newtheorem{cor}[lem]{Corollary}
\newtheorem{prop}[lem]{Proposition}
\newtheorem{question}[lem]{Question}
\newtheorem*{mainthm}{Main General Theorem}
\newtheorem*{mainplanarthm}{Main Planar Theorem}
\theoremstyle{definition}
\newtheorem{remark}[lem]{Remark}
\newtheorem{construction}[lem]{Construction}
\DeclareMathOperator{\ISRTJ}{ISR-TJ}
\DeclareMathOperator{\ISRTS}{ISR-TS}
\def\C{\mathcal{C}}
\title{A Linear Kernel for Independent Set\\ Reconfiguration in Planar Graphs}
\author{
Nicolas Bousquet\thanks{{CNRS - Université de Montréal CRM - CNRS} and {Univ. Lyon, Université Lyon 1, CNRS, LIRIS UMR CNRS 5205}; \texttt{nicolas.bousquet@cnrs.fr}}
\quad \quad {Daniel W. Cranston}\thanks{Virginia Commonwealth University, Department of Computer Science, Richmond, VA, USA; \texttt{dcransto@gmail.com}}}
\definecolor{mypurple}{RGB}{208,134,255}
\definecolor{myblue}{RGB}{10,120,253}
\definecolor{myredred}{RGB}{163,0,0}
\colorlet{myred}{myredred!70}
\definecolor{mygreen}{RGB}{126,198,52}
\definecolor{myorange}{RGB}{244,154,33}
\tikzstyle{uStyle}=[shape = circle, minimum size = 4pt, inner sep = 1pt,
\tikzstyle{lStyle}=[shape = circle, minimum size = 5pt, inner sep =
\tikzset{every node/.style=uStyle}
\def\offset{.15mm}
\DeclareRobustCommand\greenarrow{
    \begin{tikzpicture}
        \draw[thick, mygreen] (0,0) edge[bend left=30, ->, shorten <= 1.3mm, shorten >= .8mm] (1,0); 
    \end{tikzpicture}
}
\DeclareRobustCommand\dottedround{
    \begin{tikzpicture}[scale=.7, xscale=.5]
       \draw[semithick, rounded corners=.9mm, dashed, gray] (1.5,.85) -- (2.2,.85) -- (2.2,1.15) -- (.8,1.15) -- (.8, .85) -- (1.5,.85);
    \end{tikzpicture}
}
\begin{document}

\maketitle

\begin{abstract}
Fix a positive integer $r$, and a graph $G$ that is $K_{3,r}$-minor-free.
Let $I_s$ and $I_t$ be two independent sets in $G$,
each of size $k$.  We begin with a ``token'' on each vertex of $I_s$ and seek to
move all tokens to $I_t$, by repeated ``token jumping'', removing a single token
from one vertex and placing it on another vertex.
We require that each intermediate arrangement of tokens again specifies an
independent set of size $k$.  
Given $G$, $I_s$, and $I_t$, we ask whether there exists a sequence of token
jumps that transforms $I_s$ into $I_t$.  
When $k$ is part of the input, this problem is known to be PSPACE-complete.
However, it was shown by Ito, Kami\'nski, and Ono~\cite{IKO} to be
fixed-parameter tractable.  That is, when $k$ is fixed, the problem
can be solved in time polynomial in the order of $G$.  

Here we strengthen the
upper bound on the running time in terms of $k$ by showing that the problem
has a kernel of size linear in $k$.
More precisely, we transform an arbitrary input problem on a $K_{3,r}$-minor-free
graph into an equivalent problem on a ($K_{3,r}$-minor-free) graph with order $O(k)$.
This answers positively a question of Bousquet, Mouawad, Nishimura, and Siebertz~\cite{BMNS-survey} and improves the
recent quadratic kernel of Cranston, M\"{u}hlenthaler, and Peyrille~\cite{CMP24+}.
For planar graphs, we further strengthen this upper bound to get a kernel of size at most $42k$.
\end{abstract}

\section{Introduction}

The \emph{combinatorial reconfiguration} framework aims at investigating algorithmic and structural aspects of the solution space of an underlying \emph{base problem}. For example, given an instance of some problem $\Pi$ along with two feasible solutions $I_s$ and $I_t$, called the \emph{source} and \emph{target} feasible solutions, our goal is to determine if (and in how many steps) we can transform the source into the target via a sequence of adjacent feasible solutions. Such a sequence is called a \emph{reconfiguration sequence} and every step in the sequence (going from one solution to an adjacent one) is called a \emph{reconfiguration step}. 
Reconfiguration problems arise in various fields such as combinatorial games, motion of robots, random sampling, and enumeration. This framework has been extensively studied for various rules and types of problems in the last twenty years. 
The surveys~\cite{Nishimura17,van2013complexity,BMNS-survey} give a more complete overview of the field. 
In this paper we focus on transformation between independent sets; this study was initiated in~\cite{HD} and motivated by 
planning motion of robots~\cite{HSS-robot}.

Given a simple undirected graph $G$, a subset of $V(G)$ is \emph{independent} if it induces no
edges.  Finding an independent set of maximum cardinality, i.e., the {\sc Independent Set} problem,
is a fundamental problem in algorithmic graph theory; it is known to be NP-hard.  Furthermore, this problem is
not approximable within a factor of $O(n^{1-\varepsilon})$, for any $\varepsilon > 0$, unless P $=$ NP~\cite{Zuckerman07}. 

We view an independent set as a collection of tokens placed on vertices, where no two tokens are adjacent.
We can thus naturally define adjacency relations between independent sets, also called \EmphE{reconfiguration steps}{-3mm}.
In this paper, we focus on the  {\sc Token Jumping} (TJ) model, introduced by Kami\'{n}ski et al.~\cite{KaminskiMM12}, where a 
single reconfiguration step consists of first removing a token on some vertex $v$ and then immediately adding it back on any 
other vertex $w$, as long as no two tokens become adjacent. The token \Emph{jumps} from vertex $v$ to vertex $w$.
In the {\sc Token Jumping Independent Set Reconfiguration} problem (abbreviated as $\ISRTJ$),
we are given a graph $G$ and two independent sets $I_s$ and $I_t$ of $G$. Our goal is to determine
whether there exists a sequence of reconfiguration steps (called a \EmphE{reconfiguration sequence}{-4mm}) that
transforms $I_s$ into $I_t$. 

The $\ISRTJ$ problem has received considerable attention. Hearn and Demaine proved that the problem is PSPACE-complete even retricted to planar graphs of maximum degree $3$~\cite{HD} and Wrochna proved that it remains PSPACE-complete even restricted to graphs of bounded bandwidth~\cite{Wrochna18}. On the positive side, this problem can be decided in polynomial time in certain restricted graph classes, such as line graphs and some of their extensions~\cite{BonsmaKW14,BartierBM24} and even-hole
free graphs (for this latter class,~\cite{KaminskiMM12} gave a linear-time algorithm). 
The hardness of the problem motivates studying the problem from a parameterized perspective.

\paragraph{Parameterized Algorithms.}
A problem $\Pi$ is \emph{FPT} (fixed-parameter tractable) parameterized by a parameter $x$ if there exists a function $f$ and a polynomial $P$ such that for every instance $\mathcal{I}$ of $\Pi$ of size $n$ for which parameter $x$ has value $k$, the problem can be decided in time $f(k) \cdot P(n)$. One can easily prove that the existence of an FPT algorithm is equivalent to the existence of a \emph{kernel} of size $f(k)$ (for a function $f$), which is an algorithm that provides in polynomial time an \emph{equivalent instance}\footnote{That is an instance which is positive if and only if the original instance also is.} of size $f(k)$. A kernel is \emph{polynomial} if $f$ is a polynomial function.

\textsc{Independent Set} is known to be $W[1]$-complete~\cite{DowneyF95} but admits FPT algorithms on $K_{r,r}$-free 
graphs and even semi-ladder-free graphs~\cite{FabianskiPST19}.
Its reconfiguration counterpart, $\ISRTJ$ also is W[1]-hard\footnote{Under some standard algorithmic assumptions, a W[1]-hard problem do not admit FPT any algorithm.} parameterized by the size $k$ of the independent sets~\cite{ItoKOSUY14}. One can naturally wonder what is happening when we restrict to structured graph classes. 
In~\cite{IKO}, Ito et al. showed that the $\ISRTJ$ problem is FPT parameterized by $k$ on planar graphs, and even on $K_{3,r}$-free graphs, i.e., graphs containing no copy of 
$K_{3,r}$ as a subgraph.
This result has been generalized to nowhere dense graphs~\cite{LokshtanovMPRS18} and $K_{r,r}$-free 
graphs~\cite{BMP}. For a thorough history of parameterized aspects of independent set reconfiguration, we refer the reader to~\cite{BMNS-survey}.

Although the parameterized behavior of $\ISRTJ$ is now well understood, its kernelization counterpart remains largely unexplored. 
Even beyond $\ISRTJ$, a deep understanding of kernelization aspects of reconfiguration problems remains elusive 
(see the end of the present section). 
Although it was not mentioned in~\cite{IKO}, their proof can be easily adapted to get a polynomial size kernel; for instance, 
see~\cite{BMP}.  In a recent survey paper, Bousquet, Mouawad, Nishimura, and Siebertz~\cite{BMNS-survey} 
asked whether $\ISRTJ$ admits a linear kernel for all planar graphs. Cranston, M\"{u}hlenthaler, and Peyrille~\cite{CMP24+} proposed 
a quadratic kernel, 
even for graphs embedded on surfaces.  In this paper, we  answer affirmatively the question of~\cite{BMNS-survey} in a strong sense: 
we prove that $\ISRTJ$ admits a linear kernel when, for any positive integer $r$, we restrict to $K_{3,r}$-minor-free graphs.

\begin{mainthm}
For each positive integer $r$, $\ISRTJ$ admits a kernel of
size linear in $k$ on $K_{3,r}$-minor-free graphs. 
\label{main-thm}
\end{mainthm}

The special case of planar graphs is of particular interest, so there we make the bound more precise.

\begin{mainplanarthm}
For planar graphs, $\ISRTJ$ admits a kernel of size at most $42k$.
\label{main-planar-thm}
\end{mainplanarthm}

The general approach that we use to prove each of these theorems is the same.  (However, for planar graphs we work harder to optimize the multiplicative constant.)
We note that the kernel we output is actually a subgraph of the initial graph.
This is, as far as we know, the first linear kernel for a reconfiguration problem. As a direct byproduct, we obtain an algorithm running in $2^{O(k)}\cdot Poly(n)$ to decide $\ISRTJ$. As far as we know, it is the first non-trivial single exponential algorithm for a reconfiguration problem. (At the end of this section, we discuss in more detail kernelization algorithms for other reconfiguration problems.) 

To obtain a kernel that is linear, our argument must be more global than for the kernels in~\cite{IKO,CMP24+}. Their proofs are based on a neighborhood decomposition argument. Let $I_s$ and $I_t$ be the source and target independent sets and let $X:=I_s \cup I_t$. They prove in~\cite{CMP24+} that the number of classes of neighborhoods in $X$ is linear using the neighborhood diversity of planar graphs. It is easy to prove that classes with at most $1$ or at least $3$ neighbors in $X$ have linear
size in total. The hardest part of the proof consists in reducing the size of classes with 2 neighbors in $X$. In~\cite{BMP,IKO,CMP24+}, the proofs show that each class can be individually reduced to a linear number of vertices (in $k$). To get a linear kernel, we instead use a more subtle global approach, which is different from previous methods. We prove that we can keep only a constant number of vertices in each class and a linear number (in $k$) of well-chosen vertices in \textit{the union
of the $2$-classes} and still preserve the existence of a solution. As far as we know, this is the first time that such a global 
argument has been used for reconfiguration.

In Sections~\ref{overview:sec} and~\ref{C1C3:sec},
we explain in more detail the key ingredients needed to prove our Main General Theorem. 
And in Section~\ref{C2:sec} we complete its proof.

In the second part of this paper (Section~\ref{sec:planar}), we explain how we can improve the kernel size in the specific case of planar graphs, to get a kernel of size $42k$. This bound is probably still far from optimal, so we tried to compromise between a simple analysis and a small kernel size. As far as we know, no lower bound on kernel sizes has been proposed for reconfiguration. The reduction of Hearn and Demaine has size $3k$ and seem very complicated to compact in polynomial time.
We remark that, even if computing a maximum independent set in a planar graph is NP-hard, the problem naturally has a kenel of size $4k$ since any planar graph is $4$-colorable and thus contains an independent set of size $n/4$. This prompts the following question.

\begin{question}
    Does $\ISRTJ$ admit a kernel of size $4k$ on planar graphs?
\end{question}

The existence of a polynmial kernel for $\ISRTJ$ on graph classes beyond $K_{3,r}$-minor free graphs remains wide open. The proof of Bousquet, Mary, and Parreau~\cite{BMP} for $K_{r,r}$-free graphs generalizes the method of~\cite{IKO}, but yields a kernel of size $k^{f(r)}$ where $f$ is an exponential function. We thus ask the following question.

\begin{question}
    Does $\ISRTJ$ admit a kernel of size $f(r) \cdot Poly(k)$ on $K_r$-minor free graphs? On $K_{r,r}$-free graphs?
\end{question}

\paragraph{Related work: Kernelization and Reconfiguration.}
\label{related-work-sec}
While the existence (and non-existence) of FPT algorithms for reconfiguration problems has recently been widely studied, almost no polynomial 
kernels have been proposed. Mouawad et al.~\cite{MouawadNRSS17} proved that TJ-Vertex Cover Reconfiguration and TJ-Feedback Vertex Set Reconfiguration both admit quadratic kernels. But the existence of linear kernels for these problems is still open. In the 
specific case of vertex cover, we note that this contrasts with the optimization setting, where in recent decades numerous 
classical linear kernels for vertex cover have been discovered; see e.g.~\cite{Abu-KhzamFLS07}. 

As far as we know, our result gives the first non-trivial linear kernel for a reconfiguration problem. While the existence of a 
linear kernel for independent set is trivial in the optimization setting (a planar graph on at least $4k$? vertices is a \textsf{yes}-instance), the proof 
of \Cref{sec:main} requires significant work. Many meta-kernelization algorithms guarantee the existence of linear kernels on 
planar graphs for optimization problems~\cite{AlberFN04,GuoN07}, e.g., for dominating sets. While it remains open to determine whether 
\textsc{TJ-Dominating Set Reconfiguration} admits a linear kernel, several reduction rules of~\cite{AlberFN04} cannot be adapted 
directly for reconfiguration. 

Important machinery has been developed to prove that problems do not admit polynomial kernels, even if they admit FPT algorithms 
(AND and OR compositions for instance). As far as we know, this framework has never been used for reconfiguration problems. In particular, no problem is known to be FPT and also to not admit polynomial kernels. Finding such a problem, or finding a problem that admits a polynomial kernel for the optimization setting but not for its reconfiguration counterpart, remains interesting and open.

Finally, in this paper we focus on Token Jumping. Another model, called Token Sliding (TS), has been studied. In the 
\emph{TS model}, tokens can only move along edges of the graph. Both problems remain PSPACE-complete on planar graphs and on  graphs of bounded bandwidth~\cite{HD,Wrochna18}, but their complexities differ on many graph classes, such as chordal graphs~\cite{BelmonteKLMOS21} and bipartite graphs~\cite{LokshtanovM19}, where the sliding model is harder than its jumping 
counterpart. From a parameterized viewpoint, very little is known for the problem $\ISRTS$. 
While $\ISRTJ$~is known to be FPT even, on $K_{r,r}$-free graphs, the parameterized complexity of $\ISRTS$ is open even on graphs 
of bounded treewidth. $\ISRTS$ is known to be FPT on planar graphs~\cite{BBM23} and on graphs with constraints on the 
girth~\cite{BBDLM21,BartierBHMS24}. But the existence of polynomial kernels for $\ISRTS$ remains wide open.
For dominating sets, the sliding version is much hard than its jumping counterpart since DSR-TS is XL-complete even on bounded treewidth graphs, but an FPT algorithms exist for planar graphs~\cite{BousquetDMMP25}.

\section{Main General Theorem}\label{sec:main}

\subsection{Proof Overview and Key Ideas}
\label{overview:sec}
This section is devoted to proving our Main General Theorem.
Fix a positive integer $r$.
Fix an input graph $G$ with no $K_{3,r}$-minor, along with source and target independent sets, $I_s$ and
$I_t$, each of size $k$\aaside{$r$, $I_s$, $I_t$, $k$}{-4mm}.  We will show that either
$\ISRTJ(G,I_s,I_t)$ is a 
trivial \textsf{yes}-instance, or else $\ISRTJ(G,I_s,I_t)$ is equivalent to a problem
$\ISRTJ(G',I_s,I_t)$, where $G'$ is a subgraph of $G$ and $|V(G')|=O(k)$.
Let $X:=I_s\cup I_t$\aaside{$X$}{-8mm} and note that $|X|\le 2k$.  The set $X$ is called the set of \EmphE{key vertices}{-4mm}.
For each $Y\subseteq X$, the \Emph{$X$-projection} \emph{class} \EmphE{$\C_Y$}{3.5mm}
is defined by $\C_Y:=\{v\in V(G)\mbox{ s.t. }N(v)\cap X=Y\}$.
Let 
$$\C_1:=\bigcup_{\substack{Y\subseteq X\\|Y|\le 1}}\C_Y  \mbox{~~~~and~~~~}
\C_2:=\bigcup_{\substack{Y\subseteq X\\|Y|= 2}}\C_Y  \mbox{~~~~and~~~~}
\C_3:=\bigcup_{\substack{Y\subseteq X\\|Y|\ge 3}}\C_Y.\aside{$\C_1$, $\C_2$,
$\C_3$}
$$ 

For an integer $k$, we say that the $X$-projection class is a \Emph{$k$-class} if $|X|=k$. 
In other words, $\C_1$ and $\C_2$ are the unions respectively of the $1$-classes (and 
$0$-class) and of the $2$-classes, and $\C_3$ consists of all the other classes. 
So $V(G) = X\cup\C_1\cup \C_2\cup C_3$.  Recall that $|X|\le 2k$.  

We will first bound the sizes of $\C_1$ and $\C_3$, in Section~\ref{subsec:C1C3}. As already observed in~\cite{IKO},
we will prove that if $|\C_1|\ge \chi(G) \cdot k$,
then $\ISRTJ(G,I_s,I_t)$ is a \textsf{yes}-instance; so we assume $|\C_1|\le \chi(G) \cdot k$.
Let\aside{$N_2(G)$} $N_2(G):=|\{Y\subseteq X:|Y|=2\mbox{ and }\C_Y\ne\emptyset\}|$, and
let $N_3(G):=|\{Y\subseteq X:|Y|\ge 3\mbox{ and }\C_Y\ne\emptyset\}|$\aside{$N_3(G)$}.
Since $G$ has no $K_{3,r}$-minor, for each $Y$ with $|Y|\ge 3$, we have $|\C_Y|\le r-1$.
So $|\C_3|\le (r-1)N_3(G)$.  Thus, we aim to show that $N_3(G)=O(k)$.  Proving this is easy for
all planar graphs (and, more generally, for all graphs on each fixed surface), as we will show in \Cref{planar-complexity}.
The proof for all $K_{3,r}$-minor-free graphs is more challenging~\cite{JR-complexity, BBFQR-complexity}; but the result 
still holds (see \Cref{general-complexity}).

So the core of the proof consists in showing that we can reduce the size of $\C_2$.  If we could show that $|\C_2|=O(k)$, 
then we could take $G$ as its own kernel.  
We cannot prove this directly, although we can ensure that $N_2(G)$ has size $O(k)$. 
However, unsurprisingly, the size of a class $\C_Y$ with $|Y|=2$ is in general not bounded by a constant.
But can a 2-class be very large, say $\omega(k^2)$?

To motivate our approach in the remainder of the paper, we now sketch a crucial idea.
Suppose that $\C_Y$ is very large for some 2-class $Y$.  If it is impossible to move a token to $\C_Y$
starting from $I_s$, then we can delete all of $\C_Y$ without changing whether we can reconfigure
$I_s$ into $I_t$ (since no vertices of $\C_Y$ can be used in a reconfiguration sequence); 
the same is true if it is impossible to move a token to $\C_Y$ starting from $I_t$.
So we assume that neither of these is impossible.

Since $\C_Y$ is very large, it contains an independent set $I_Y$ of size $kr$.
Since $G$ is $K_{3,r}$-minor-free, each vertex other than the two key vertices of $\C_Y$ has at most $r-1$ neighbors
in $\C_Y$ and, in particular, in $I_Y$.  By assumption, starting from $I_s$ we can eventually
move some token to $\C_Y$.  
In the resulting independent set $I'$, both tokens have moved off the vertices
of $Y$ (moving the first of these tokens off is called \Emph{unlocking} $\C_Y$), 
so each of the $k$ vertices with token has at most $r-1$ neighbors in $I_Y$.  Thus, $I_Y$ contains at least
$|I_Y|-k(r-1)=k$ vertices that are not adjacent to any vertex in $I'$.  So we can move all tokens on vertices 
in $I'$ to these available vertices of 
$I_Y$ (in arbitrary order).  The same is true starting from $I_t$.  Finally, we can move tokens freely
within $I_Y$, since it is an independent set.  So we can reconfigure $I_s$ to $I_t$.
This argument succeeds whenever $|\C_Y|\ge \chi(G)kr$,
since that guarantees an independent set $I_Y$ of size $kr$ (the largest color class in a $\chi(G)$-coloring of 
$G[\C_Y]$).  Thus, whenever $|\C_Y| > \chi(G)kr$, we can delete arbitrary vertices of $\C_Y$.  

Following this approach ensures\footnote{When $G$ is $K_{3,r}$-minor-free, it is straightforward to show that $\chi(G)=O(r)$.  In fact, when $r\ge 3600$ Kostochka and Prince~\cite{KP} showed that $G$ is $(r+2)$-degenerate; thus, $\chi(G)\le r+3$.  See \Cref{KP-lem}.} that $|\C_2'|\le \chi(G)kr N_2(G)=O(k^2)$.
Below we adapt this idea to ensure that $|\C_2'|\le \max\{O(\chi(G)kr),$ $O(N_2(G))\}$.
The main new idea is that the independent set $I_Y$ above can be spread over multiple 2-classes.  So 
if we unlock a 2-class containing vertices of $I_Y$, then we use its available vertices to receive tokens
from the \emph{next} 2-class containing vertices of $I_Y$, unlocking that one and proceeding by induction.
Informally, $G'$ is formed from $G$ by deleting some vertices of certain ``big'' 2-classes. 
Formally, we defer constructing $G'$ to \Cref{G'const}.
But once we define $G'$ we can prove the next lemma, which is the core of proving our Main General Theorem.

\begin{lem}
    \label{key-lem}
    If in $G$ we can (a) start from $I_s$ and unlock some big 2-class and also (b) start from $I_t$ and unlock some big 2-class, then
    $\ISRTJ(G',I_s,I_t)$ is equivalent to $\ISRTJ(G,I_s,I_t)$, and both of them are \textsf{yes}-instances.
\end{lem}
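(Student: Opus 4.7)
My plan is to prove the lemma by exhibiting, from both $I_s$ and $I_t$, TJ-sequences that meet at a common independent set lying inside a big 2-class. Since $G'$ is a subgraph of $G$, any TJ-sequence valid in $G'$ is valid in $G$, so it suffices to produce these sequences inside $G'$; this simultaneously shows $\ISRTJ(G',I_s,I_t)$ is a \textsf{yes}-instance and, via inclusion, that $\ISRTJ(G,I_s,I_t)$ is too, yielding the equivalence and both ``yes'' conclusions at once.

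First I would formalize the paragraph preceding the lemma. By hypothesis~(a), there is a TJ-sequence from $I_s$ to some independent set $I_s^*$ with no token on either key vertex of some big 2-class $\C_{Y_s}$. Using bigness of $\C_{Y_s}$ together with the bound $\chi(G)=O(r)$ from \Cref{KP-lem}, the largest color class in a proper coloring of $G[\C_{Y_s}]$ gives an independent set $L_s\subseteq \C_{Y_s}$ of size at least $kr$. Since $G$ is $K_{3,r}$-minor-free, every vertex outside $Y_s$ has at most $r-1$ neighbors in $\C_{Y_s}$; hence the $k$ tokens of $I_s^*$, which avoid $Y_s$, collectively block at most $k(r-1)$ vertices of $L_s$, leaving at least $k$ vertices available. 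Moving the tokens of $I_s^*$ one at a time onto $k$ distinct available vertices of $L_s$ produces an independent set $J_s\subseteq \C_{Y_s}$ of size $k$ (each such jump is valid because its target is free of neighbors in the remaining $I_s^*$-tokens by choice, and independent from the already-moved tokens because $L_s$ is independent). The symmetric argument via hypothesis~(b) yields $J_t\subseteq \C_{Y_t}$ for some big 2-class $\C_{Y_t}$.

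Next I would connect $J_s$ and $J_t$. When $Y_s=Y_t$, sliding $J_s$ to $J_t$ inside $L_s$ is routine, since the slack $|L_s|-k\ge k(r-1)$ guarantees a safe landing vertex at every step. When $Y_s\ne Y_t$, I would use $J_s$ itself as the starting configuration of a second receiving step aimed at $\C_{Y_t}$: because $J_s\subseteq \C_{Y_s}\subseteq V(G)\setminus X$, the key vertices of $Y_t$ are already empty under $J_s$, so $\C_{Y_t}$ is unlocked ``for free'' and we can rerun the receiving argument to move $J_s$ into $L_t$, then slide it onto $J_t$ as in the easy case.

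The main obstacle I anticipate is compatibility with the as-yet-undefined $G'$ of \Cref{G'const}. The unlocking sequences supplied by (a) and (b) live in $G$ and may traverse vertices removed when forming $G'$, while the independent sets $L_s,L_t$ of size $\ge kr$ must still survive after deletion. To make the above argument go through in $G'$ rather than merely $G$, the construction of \Cref{G'const} must be engineered so that every big 2-class retains a large independent set of ``receiver'' vertices and so that the unlocking step itself remains feasible in $G'$. I expect this is precisely where the ``spread across multiple 2-classes'' refinement flagged just before the lemma is invoked: pooling receivers across several big 2-classes provides the inductive slack to absorb the deletions that define $G'$ while keeping the unlocking-and-receiving routine intact.
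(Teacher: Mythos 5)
There is a genuine gap: your argument is essentially the heuristic sketch from the overview, which the paper itself notes only yields a quadratic kernel, and it does not actually prove the lemma for the graph $G'$ of \Cref{G'const}. The central problem is your claim that ``bigness of $\C_{Y_s}$ \dots gives an independent set $L_s\subseteq\C_{Y_s}$ of size at least $kr$.'' In \Cref{G'const} a 2-class is \emph{big} as soon as $|\C_Y|\ge \chi(G)(2r-1)+1$, a threshold that is constant in $k$; and after the construction, a big class retains only an independent set of size about $2r$ in $G'$ (or only its vertices of $I$). So no single big 2-class can be assumed to contain $kr$ independent receiver vertices, and your entire receiving step --- ``the $k$ tokens block at most $k(r-1)$ vertices of $L_s$, leaving $k$ available'' --- collapses, because $k(r-1)$ already exceeds the size of the class. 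The same issue defeats your $Y_s\ne Y_t$ case, where you propose to ``rerun the receiving argument'' into $L_t$.

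The missing ideas are exactly the ones the paper supplies in \Cref{lem:unlock}. First, one takes a \emph{shortest} unlocking sequence $\mathcal{R}$ and shows that all vertices it touches (except the last) lie in one component of $G$ minus the big classes and their key vertices; combined with \Cref{lem:minor2vertices}, this shows the reached set $J_0'\setminus I_s$ blocks only $O(r)$ vertices (not $O(kr)$) in \emph{each} big class, so two free receivers $a,b$ survive even in the pruned class of size $2r$. Second, one does not park all $k$ tokens in one class: one cascades, unlocking in turn every big class meeting the globally chosen independent set $I$ of size $N_2(G)(3r-2)+k$ (spread over many 2-classes), and only at the end counts that at least $|I|-N_2(G)(3r-2)=k$ vertices of $I$ remain free to receive all tokens; the meeting point for the sequences from $I_s$ and $I_t$ is then a ``helpful'' size-$k$ subset of the single independent set $I$ (\Cref{rem:allgraal}), not two different classes that must be connected afterwards. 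Your final paragraph correctly identifies that something like this is needed, but deferring it is deferring the entire content of the lemma.
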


Given \Cref{key-lem}, we prove our Main General Theorem as follows.

\begin{proof}[Proof of the Main General Theorem] 
    We prove that $\ISRTJ(G',I_s,I_t)$ and $\ISRTJ(G,I_s,I_t)$ are equivalent.  Since $G'\subseteq G$, 
    if the latter is a \textsf{no}-instance, then so is the former.  So assume instead that $\ISRTJ(G,I_s,I_t)$
    is a \textsf{yes}-instance.  First, suppose that it is impossible, starting from $I_s$, to ever unlock a  big 2-class.  
    So all big 2-classes will always remain locked, and 
    it is impossible to ever move a token to a vertex of a big 2-class.  Thus, since every vertex of
    $V(G)\setminus V(G')$ is in a big 2-class, every reconfiguration sequence in $G$, starting from $I_s$
    is also valid in $G'$.  This proves the desired result.  The argument is identical if it is imposible
    to unlock a big 2-class, starting from $I_t$.  Thus, we assume instead that, starting from both $I_s$
    and $I_t$, it is possible to unlock some big 2-class.  Now we are done by \Cref{key-lem}.
\end{proof}

\subsection{Dealing with \texorpdfstring{$\C_1$}{C1} and \texorpdfstring{$\C_3$}{C3}}\label{subsec:C1C3}
\label{C1C3:sec}
In this subsection, we determine a function $f$ such that if $G$ is $K_{3,r}$-minor-free, then we can assume that $|\C_1|+|\C_3|\le f(r)k$.
And when $G$ is planar, we can improve our bound on $f$.  Bounding $|\C_1|$ is easy, both in the planar case and in the more general case.  But bounding $|\C_3|$ is more work.  For this we use the observation (\Cref{clm:C3}) that $|\C_3|\le (r-1)N_3(G)$; recall here that $N_3(G)$ denotes the number of sets $Y\subseteq X$ with $|Y|\ge 3$ and $\C_Y\ne\emptyset$. To bound $N_3(G)$ in the general (non-planar) case, we use a powerful result (\Cref{nbhd-complexity-lem}) from~\cite{BBFQR-complexity}.

It is straightforward to prove that if $G$ is $K_{3,r}$-minor-free, then $\chi(G) = O(r)$.  But determining the right multiplicative (and additive) constant is more work.  This is done by the following lemma, which is sharp.

\begin{lem}[Kostochka--Prince~\cite{KP}]
\label{KP-lem}
Fix $r\ge 6300$.  If $G$ is an $n$-vertex graph with $n\ge r+3$ and $G$
has no $K_{3,r}$-minor, then $2|E(G)| \le (r+3)(n-2)+2$.  Thus, $G$ is $(r+2)$-degenerate
and $\chi(G)\le r+3$.
\end{lem}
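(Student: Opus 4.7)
The statement has three conclusions of decreasing difficulty: the edge bound $2|E(G)| \le (r+3)(n-2)+2$, the $(r+2)$-degeneracy, and the chromatic bound $\chi(G) \le r+3$. The plan is to establish the edge bound first as the substantive content, then extract the other two as essentially one-line corollaries.

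For the edge bound itself, I would invoke directly the main theorem of Kostochka and Prince~\cite{KP} rather than reprove it; their argument is delicate and occupies the bulk of a paper. Their approach, which I would only sketch at a very high level, proceeds by contradiction: take a hypothetical counterexample $G$ of minimum order, deduce that $G$ is edge-maximal subject to being $K_{3,r}$-minor-free, and derive from the edge assumption a strong minimum-degree lower bound. A careful local analysis around a low-degree vertex (plus a ``splitting'' operation when convenient) then exposes three candidate branch vertices that can be joined by $r$ internally disjoint connected branch sets, producing the forbidden $K_{3,r}$-minor. The threshold $r \ge 6300$ appears because several counting inequalities in this analysis only close up once $r$ is sufficiently large; any weaker hypothesis would require either extra slack in the bound or a substantially more intricate case analysis.

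Given the edge bound, $(r+2)$-degeneracy follows at once. It suffices to show every subgraph $H$ of $G$ has a vertex of degree at most $r+2$. If $|V(H)| \le r+2$ this is trivial. If $|V(H)| = n_H \ge r+3$, then $H$ is also $K_{3,r}$-minor-free, so the edge bound applied to $H$ gives average degree at most $(r+3)(n_H-2)/n_H + 2/n_H < r+3$, forcing some vertex of $H$ to have degree at most $r+2$. Iteratively removing such a vertex yields a degeneracy ordering certifying $(r+2)$-degeneracy. For the chromatic bound, I would run the standard greedy coloring along the reverse degeneracy ordering: each vertex, when reached, has at most $r+2$ previously-colored neighbors and so one of $r+3$ colors is always available, giving $\chi(G) \le r+3$.

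The main obstacle is unmistakably the edge bound, which is the content of~\cite{KP} and is proved there by a long extremal-structural argument. The derivations of degeneracy and of the chromatic bound from the edge count are routine and essentially textbook; the only verification required is the arithmetic check that $(r+3)(n-2)+2 < (r+3)n$, which is immediate.
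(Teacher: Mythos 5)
Your proposal matches the paper exactly in spirit: the paper states this lemma with attribution to Kostochka--Prince and gives no proof, treating the edge bound as a black box from~\cite{KP} and the degeneracy and chromatic conclusions as the routine consequences you spell out. Your average-degree calculation and greedy-coloring derivation are correct, so this is essentially the same (and the intended) approach.
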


\begin{lem}\label{clm:C1}
If $|\C_1|\ge \chi(G)k$, 
then $\ISRTJ(G,I_s,I_t)$ is a \textsf{yes}-instance.
In particular, since $G$ is $K_{3,r}$-minor-free, this is true whenever $r\ge 6300$ and $|\C_1|\ge k(r+3)$.
\label{clm1}
\end{lem}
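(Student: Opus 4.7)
The plan is to find a large independent set $J$ inside $\C_1$ and use it as a staging area to reroute the tokens: reconfigure $I_s$ into some $k$-subset of $J$, slide to any other $k$-subset of $J$, and then reconfigure into $I_t$. A proper $\chi(G)$-coloring of $G$, restricted to $\C_1$, yields by pigeonhole an independent set $J \subseteq \C_1$ with $|J| \ge \lceil |\C_1|/\chi(G)\rceil \ge k$. By the definition of $\C_1$, every $j \in J$ satisfies $|N(j) \cap X| \le 1$, and $J \cap X = \emptyset$ since $J \subseteq V(G)\setminus X$.

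The crux is a greedy routine moving all tokens from $I_s$ onto $J$. Write the current token set as $T = A \cup B$ with $A := T \cap I_s$ and $B := T \cap J$; initially $A = I_s$ and $B = \emptyset$. Whenever $|B| < k$ we have $|A| \ge 1$ and $|J \setminus B| \ge |J| - (k-1) \ge 1$. Pick any $u \in J \setminus B$. Since $|N(u) \cap X| \le 1$ and $A \subseteq X$, the set $N(u) \cap A$ is either empty or a singleton $\{v\}$; in the empty case, instead pick any $v \in A$. Jumping the token on $v$ to $u$ is valid: the remaining tokens $A \setminus \{v\}$ have no neighbor in $\{u\}$ by choice of $v$, and $B \subseteq J$ has no neighbor in $\{u\} \subseteq J$ because $J$ is independent. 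Iterating $k$ times produces a reconfiguration sequence from $I_s$ to a $k$-subset of $J$.

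The identical routine, run from $I_t$, yields a reconfiguration sequence from $I_t$ to some $k$-subset of $J$; reversing gives a sequence back to $I_t$. Finally, any two $k$-subsets of $J$ are mutually reconfigurable, since every intermediate set obtained by swapping one vertex of $J$ for another unoccupied vertex of $J$ is a $k$-subset of the independent set $J$, hence independent. Composing the three sequences reconfigures $I_s$ to $I_t$, proving the first assertion. The second assertion follows directly from \Cref{KP-lem}: for $r \ge 6300$ we have $\chi(G)\le r+3$, so $|\C_1|\ge k(r+3)$ indeed implies $|\C_1|\ge \chi(G)k$. There is no significant obstacle; the only care needed is to verify that the greedy step never gets stuck, which is immediate from $|N(u)\cap X|\le 1$.
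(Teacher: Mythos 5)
Your proposal is correct and follows essentially the same route as the paper: pigeonhole on a $\chi(G)$-coloring restricted to $\C_1$ yields an independent set of size at least $k$, and the fact that each vertex of $\C_1$ has at most one neighbor in $X$ guarantees the greedy token moves never get stuck. The only cosmetic differences are that the paper takes the staging set to have size exactly $k$ (so no intra-$J$ shuffle is needed) and moves the "blocked" tokens first rather than interleaving.
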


\begin{proof}
The second statement follows from the first by \Cref{KP-lem}; thus, we prove the first.

    Assume $|\C_1|\ge \chi(G)k$.  By definition, $G$ is $\chi(G)$-colorable.  
By Pigeonhole, $\C_1$ contains an independent set $I_m$ (for middle) of size 
$\chi(G)k/\chi(G)=k$.  
Starting with tokens on $I_s$, for each $v\in I_s$ with a neighbor $w_v\in
I_m$, move the token on $v$ to some such $w_v$.  Now move all remaining tokens
(in an arbitrary order) to the unoccupied vertices of $I_m$.  By symmetry, we
can also move all tokens from $I_t$ to $I_m$.  Thus, we have a \textsf{yes}-instance of
$\ISRTJ(G,I_s,I_t)$, as claimed.
\end{proof}

Henceforth we assume $|\C_1|<\chi(G)k$.  Recall from above that $N_3(G)$ 
denotes the number of sets $Y\subseteq X$ with $|Y|\ge 3$ and $\C_Y\ne\emptyset$.  
The next lemma follows directly from the fact that $G$ is $K_{3,r}$-minor-free.

\begin{lem}\label{clm:C3}
$|\C_3|\le (r-1)N_3(G)$.
\label{clm2}
\end{lem}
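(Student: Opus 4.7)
The plan is to bound the size of each individual class $\C_Y$ with $|Y|\ge 3$ by $r-1$ and then sum over all such $Y$.

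First I would fix any $Y\subseteq X$ with $|Y|\ge 3$ and $\C_Y\ne\emptyset$, and pick three distinct key vertices $y_1,y_2,y_3\in Y$. By definition of the projection class $\C_Y$, every vertex $v\in\C_Y$ satisfies $N(v)\cap X=Y$, so in particular $v$ is adjacent to each of $y_1,y_2,y_3$. Thus $\{y_1,y_2,y_3\}$ together with the vertices of $\C_Y$ form a complete bipartite subgraph $K_{3,|\C_Y|}$ in $G$.

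Next I would observe that a subgraph of $G$ is in particular a minor of $G$. Since $G$ is assumed to be $K_{3,r}$-minor-free, we cannot have $|\C_Y|\ge r$; hence $|\C_Y|\le r-1$.

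Finally, I would sum this bound over all $Y\subseteq X$ with $|Y|\ge 3$ and $\C_Y\ne\emptyset$. Since the classes $\C_Y$ partition $\C_3$ by definition, this yields
\[
|\C_3|\;=\;\sum_{\substack{Y\subseteq X\\ |Y|\ge 3,\ \C_Y\ne\emptyset}}|\C_Y|\;\le\;(r-1)\,N_3(G),
\]
which is the claimed bound. There is no real obstacle here: the argument is a direct unpacking of the definitions of $\C_Y$ and $N_3(G)$ together with the fact that a $K_{3,r}$-subgraph is automatically a $K_{3,r}$-minor.
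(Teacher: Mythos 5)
Your proof is correct and rests on exactly the same key fact as the paper's: a class $\C_Y$ with $|Y|\ge 3$ and $|\C_Y|\ge r$ would yield a $K_{3,r}$ subgraph (hence minor), so each such class has size at most $r-1$, and summing over the $N_3(G)$ nonempty classes gives the bound. The paper phrases this as a pigeonhole argument by contradiction, but your direct version is the same argument (and arguably cleaner).
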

\begin{proof}
Suppose the lemma is false.  By Pigeonhole, there exists $Y\subseteq X$ with $|\C_Y|\ge \lceil |\C_3|/N_3(G)\rceil>\mbox{$(r-1)N_3(G)/N_3(G)$}$.  
That is, $|\C_Y|\ge r$.  But now $G$ contains the subgraph $K_{3,r}$ with the vertices in the part of size 3 in $Y$ and those in 
the part of size $r$ in $\C_Y$.  This contradicts that $G$ is $K_{3,r}$-minor-free.
\end{proof}

When $G$ is planar, we can use Euler's formula to improve the bound above.
\begin{lem}
\label{planar-complexity}
If $G$ is planar, then $N_2(G)\le 3|X| \le 6k$ and $N_3(G)\le 4k$.
\end{lem}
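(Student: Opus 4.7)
\medskip

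The plan is to encode each nonempty class $\C_Y$ (with $|Y|\ge 2$) by a single ``witness'' vertex and exploit the planarity of $G$ to bound how many such witnesses there can be.

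For $N_2(G)$, I would proceed as follows. For each $Y\subseteq X$ with $|Y|=2$ and $\C_Y\ne\emptyset$, pick an arbitrary witness $v_Y\in\C_Y$, and let $H$ be the bipartite subgraph of $G$ on parts $X$ and $\{v_Y\}_Y$ keeping only the edges from each $v_Y$ to the two vertices of $Y$. Then $H$ is a planar subgraph of $G$ in which every $v_Y$ has degree exactly $2$. Suppressing these degree-$2$ vertices (replacing each $v_Y$ together with its two incident edges by a single edge between the two elements of $Y$) yields a planar multigraph $H^*$ on vertex set $X$. Because distinct sets $Y$ are distinct pairs in $X$, the suppression introduces no parallel edges, and since $|Y|=2$ there are no loops. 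Hence $H^*$ is a simple planar graph with $|X|$ vertices and $N_2(G)$ edges, so $N_2(G)\le 3|X|-6\le 3|X|\le 6k$.

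For $N_3(G)$, the argument is similar but uses the sharper bound for bipartite planar graphs. For each $Y\subseteq X$ with $|Y|\ge 3$ and $\C_Y\ne\emptyset$, pick a witness $v_Y\in\C_Y$ and form the bipartite subgraph $H'$ of $G$ on parts $X$ and $\{v_Y\}_Y$, keeping all edges from $v_Y$ to $X$. Each $v_Y$ has degree at least $3$ in $H'$, and $H'$ is bipartite planar. Using $|E(H')|\le 2|V(H')|-4$ for bipartite planar graphs, we get
\[
3\,N_3(G)\;\le\;|E(H')|\;\le\;2\bigl(|X|+N_3(G)\bigr)-4,
\]
so $N_3(G)\le 2|X|-4\le 4k-4\le 4k$.

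The only mildly subtle step is verifying that after suppressing the degree-$2$ vertices in the $N_2(G)$ argument we really do get a simple planar graph, i.e., that no two distinct $2$-classes share the same pair of key vertices: this is immediate from the definition of $X$-projection class (the pair $Y$ determines the class $\C_Y$), so distinct $Y$'s produce distinct suppressed edges. Everything else is just Euler's formula in the simple and bipartite planar cases; no reconfiguration machinery is needed.
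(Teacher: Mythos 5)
Your proof is correct. For $N_2(G)$ it is essentially the paper's argument: one witness per $2$-class, planarity, and Euler's formula give $N_2(G)\le 3|X|-6$; the paper realizes each suppressed witness as an edge of an auxiliary plane graph on vertex set $X$ in exactly the same way. For $N_3(G)$ you diverge slightly. The paper extends that same auxiliary graph by joining consecutive neighbors around each witness and then assigning each class with $|Y|\ge 3$ to a \emph{face} of the resulting plane graph, bounding $N_3(G)$ by the number of faces, at most $2|X|-4$. You instead bound the number of \emph{edges} of the bipartite witness--key incidence graph via the bound $|E|\le 2|V|-4$ for simple bipartite planar graphs, which yields $N_3(G)\le 2|X|-4$ directly. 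The two computations are equivalent consequences of Euler's formula, but your version sidesteps the mildly delicate step of verifying that distinct big classes get assigned distinct faces, and it even gives a marginally sharper intermediate constant ($2|X|-4$ versus the paper's stated $2|X|$). Both proofs implicitly assume $|X|\ge 3$ when invoking $3|X|-6$; the degenerate cases $|X|\le 2$ are trivial since then $N_2(G)\le 1$ and $N_3(G)=0$.
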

\begin{proof}
We draw a plane graph $G_X$ with
vertex set $X$ where each set $Y\subseteq X$ with $|Y|\ge2$ and $C_Y\ne\emptyset$ corresponds to an edge or face of $G_X$.  
Think
of restricting $G$ to $X$ and one vertex $v_Y$ in $C_Y$ for each such $Y$ with $|C_Y|\ge 2$.  For each
pair of vertices, $y_i$ and $y_j$, that appear successively around $v_Y$, add edge
$y_iy_j$, if it is not already present, following the path $y_iv_Yy_j$.  Finally,
delete each $v_Y$; and ``assign'' the resulting newly created face (or edge) to $Y$.
By Euler's Formula, the resulting plane graph has at most $3|X|-6$ edges, so at
most $2|X|-4$ faces.  
That is, $N_2(G)\le 3|X|\le 6k$ and $N_3(G)\le 2|X|\le 4k$.
\end{proof}

The \emph{neighborhood complexity} of a graph class $\mathcal{G}$ is the smallest function $f$, if it exists, such that for all $G\in \mathcal{G}$, nonempty $A\subseteq V(G)$, and nonnegative integers $r$, we have the bound $|\{N^s[v]\cap A: v\in V(G)\}| \le f(s)|A|$.  Here $N^s[v]$ is the set of vertices at distance at most $s$ from $v$.  We need the following result.
\begin{lem}[{\cite[Theorem 18]{BBFQR-complexity}}]
\label{nbhd-complexity-lem}
For all positive integers $s,t$ with $t\ge 4$, for every $K_t$-minor-free graph $G$, for every set $A$ of vertices of $G$,
$$|\{N^s[v]\cap A: v\in V(G)\}|
\le 4^t(t-3)t^{2(t-1)}(s+1)^{3(t-1)}|A|.$$
\end{lem}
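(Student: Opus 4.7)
The plan is to exploit that $K_t$-minor-free graphs form a class of \emph{bounded expansion}, with parameters controllable as functions of $t$ alone. The starting observation is that every minor of $G$ is again $K_t$-minor-free, so in particular every $s$-shallow minor of $G$ is $K_t$-minor-free; by the Kostochka--Thomason theorem every such shallow minor has edge density $O(t\sqrt{\log t})$. This places $G$ in a very well-behaved sparse class, and in particular the ``grads'' $\nabla_s(G)$ are bounded by an explicit function of $t$ uniformly in $s$.

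Next, I would translate the problem into a counting problem on equivalence classes. Define $v\sim_s w$ when $N^s[v]\cap A = N^s[w]\cap A$; the goal is to bound the number of $\sim_s$-classes. The standard tool is a low-treedepth coloring: one colors $V(G)$ with a bounded number of colors (the \emph{$(s+1)$-centered chromatic number} $\chi_{s+1}(G)$), such that any $s+1$ color classes induce a subgraph of treedepth at most $s+1$. For $K_t$-minor-free graphs, $\chi_{s+1}(G)$ can be bounded by an explicit polynomial in $s$ (with degree depending on $t$) times a function of $t$, using the density-of-shallow-minors estimate above. Inside any treedepth-$(s+1)$ subgraph, an elimination-tree ``trunk'' argument shows that a vertex's $s$-neighborhood in $A$ is determined by a bounded-length ancestral sequence of vertices, each of which can be counted by a polynomial-in-$|A|$ quantity of degree at most $s+1$. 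Summing over all $\binom{\chi_{s+1}(G)}{s+1}$ tuples of color classes, and then applying a pigeonhole-style argument to control the contribution of vertices not in $A$, gives a bound of the form $h(s,t)|A|$ for a completely explicit function $h$.

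The main obstacle is getting the constants to match $4^t(t-3)t^{2(t-1)}(s+1)^{3(t-1)}$. The generic bounded-expansion framework gives only implicit constants, so one must (a) use the sharp Kostochka/Thomason density bound rather than a generic Mader-type bound, (b) plug this into the best available estimates for $\chi_{s+1}$ on $K_t$-minor-free graphs, which are polynomial in $s$ with degree linear in $t$, and (c) enumerate trunks in elimination trees carefully so that each $s$-neighborhood is counted exactly once. Tracking these three optimizations simultaneously is the quantitatively delicate part of the argument; the qualitative picture (linear in $|A|$, polynomial in $s$, single-exponential in $t$) falls out immediately from the bounded-expansion machinery.
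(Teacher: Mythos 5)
This lemma is not proved in the paper at all: it is quoted verbatim as Theorem~18 of the cited reference \cite{BBFQR-complexity} and used as a black box. So there is no internal proof to compare against; the only question is whether your sketch would actually establish the stated inequality, and it would not.

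The content of the lemma is the explicit constant $4^t(t-3)t^{2(t-1)}(s+1)^{3(t-1)}$, and your proposal defers exactly that part (``tracking these three optimizations simultaneously is the quantitatively delicate part''). A plan that establishes only the qualitative shape ``$h(s,t)\cdot|A|$ with $h$ polynomial in $s$'' is not a proof of this statement, and in the present paper the explicit dependence on $t=r+3$ and $s=1$ is what feeds into \Cref{general-complexity}. Worse, the route you chose is known not to yield constants of this quality: the generic bounded-expansion argument via $(s+1)$-centered colorings bounds the number of distinct traces by a quantity involving $\binom{\chi_{s+1}(G)}{s+1}$ and an enumeration of trunks in elimination forests, and even with the best known polynomial bounds on $p$-centered chromatic numbers of $K_t$-minor-free graphs (polynomial in $p$ of degree growing with $t$), the resulting exponent of $(s+1)$ is far larger than $3(t-1)$ and the dependence on $t$ is worse than single-exponential. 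The sharp bounds in \cite{JR-complexity,BBFQR-complexity} are obtained by a more direct structural argument tailored to minor-closed classes, not by instantiating the low-treedepth-coloring machinery. If you want to use this lemma, cite it; if you want to prove it, you need the argument of the cited paper, not the generic framework.
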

Since we are interested only in neighborhoods (that is, distance 1), we let $s:=1$.  Since $G$ is $K_{3,r}$-minor-free, it is also $K_{3+r}$-minor-free.  So we let $t:=r+3$.  Finally, we let $A:=X$ and recall that $|A|=|X|\le 2k$.
\begin{cor}
\label{general-complexity}
$N_2(G)+N_3(G)\le 4^{r+3}r(r+3)^{2r+4}2^{3r+6}(2k)\le 2^{5r+13}(r+3)^{2r+5}k$.
\end{cor}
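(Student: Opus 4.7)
The plan is a direct application of Lemma \ref{nbhd-complexity-lem} preceded by a short translation step that identifies $N_2(G)+N_3(G)$ with a count of distinct neighborhoods. By definition, $N_2(G)+N_3(G)$ counts the distinct subsets $Y\subseteq X$ with $|Y|\ge 2$ of the form $N(v)\cap X$ for some $v\in V(G)$. Since $N^1[v]\cap X$ equals $N(v)\cap X$ when $v\notin X$ and equals $(N(v)\cap X)\cup\{v\}$ when $v\in X$, we have $N_2(G)+N_3(G)\le |\{N^1[v]\cap X:v\in V(G)\}|$, up to a trivially absorbed additive correction of at most $|X|\le 2k$.

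Next I verify the hypotheses of Lemma \ref{nbhd-complexity-lem} with $s:=1$, $t:=r+3$, and $A:=X$. The requirement $t\ge 4$ holds for $r\ge 1$. For the minor-freeness hypothesis, I use that $K_{r+3}$ contains $K_{3,r}$ as a subgraph (hence as a minor), so any $K_{r+3}$-minor of $G$ would force a $K_{3,r}$-minor; since $G$ is $K_{3,r}$-minor-free, it is also $K_{r+3}$-minor-free. Finally, $|A|=|X|\le 2k$.

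The final step is arithmetic. Plugging $s=1$ gives $(s+1)^{3(t-1)}=2^{3(r+2)}=2^{3r+6}$; plugging $t=r+3$ gives $t-3=r$ and $t^{2(t-1)}=(r+3)^{2r+4}$; and $4^t=4^{r+3}=2^{2r+6}$. The lemma then yields
$$N_2(G)+N_3(G)\le 4^{r+3}\cdot r\cdot (r+3)^{2r+4}\cdot 2^{3r+6}\cdot(2k),$$
which is the first claimed inequality. Combining powers of $2$ as $2^{2r+6}\cdot 2^{3r+6}\cdot 2 = 2^{5r+13}$, and using $r\le r+3$ to fold the factor $r$ into $(r+3)^{2r+4}$ (yielding $(r+3)^{2r+5}$), produces the second inequality. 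The only real obstacle in the whole argument is the closed-versus-open neighborhood bookkeeping in the first paragraph; everything else is routine substitution and simplification of the cited bound.
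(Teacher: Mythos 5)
Your proof is correct and follows essentially the same route as the paper, which likewise obtains the corollary by substituting $s=1$, $t=r+3$, and $A=X$ with $|X|\le 2k$ into Lemma~\ref{nbhd-complexity-lem} and simplifying. Your extra care about the open-versus-closed neighborhood discrepancy is a point the paper glosses over entirely; it is harmless here (and disappears altogether if one counts only vertices outside $X$, which is how the classes are used), so there is nothing further to add.
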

Combining the results in this subsection, we get the following.
\begin{lem}
$|\C_1|+|\C_3|\le k(\max\{r,6300\}+3+(r-1)(2^{5r+13}(r+3)^{2r+5}))$.  If $G$ is planar, then $|\C_1|+|\C_3|\le {12}k$.
\end{lem}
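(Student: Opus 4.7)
The plan is to combine the three bounds established earlier in this subsection, handling the general $K_{3,r}$-minor-free case and the planar case in parallel, with all the heavy lifting already packaged into the cited lemmas.

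For $|\C_1|$, by \Cref{clm:C1} I may assume $|\C_1|<\chi(G)k$ (otherwise the instance is a trivial \textsf{yes}-instance and we are done). In the general case I will apply \Cref{KP-lem} to bound $\chi(G)$: when $r\ge 6300$ this yields $\chi(G)\le r+3$ directly, and when $r<6300$ I will observe that a $K_{3,r}$-minor-free graph is also $K_{3,6300}$-minor-free (since deleting $6300-r$ vertices from the larger part of a $K_{3,6300}$-minor produces a $K_{3,r}$-minor), so applying \Cref{KP-lem} with the parameter $6300$ gives $\chi(G)\le 6303$. The two cases combine into $|\C_1|\le k(\max\{r,6300\}+3)$.

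For $|\C_3|$ I will first apply \Cref{clm:C3} to reduce to a bound on $N_3(G)$, and then invoke \Cref{general-complexity} to get $N_3(G)\le 2^{5r+13}(r+3)^{2r+5}k$. Multiplying yields $|\C_3|\le k(r-1)\bigl(2^{5r+13}(r+3)^{2r+5}\bigr)$, and summing with the bound on $|\C_1|$ gives the claimed general inequality. For the planar case, I will use that $\chi(G)\le 4$ by the Four-Color Theorem, so via \Cref{clm:C1} we may assume $|\C_1|\le 4k$. Since a planar graph is $K_{3,3}$-minor-free, I will take $r=3$ in \Cref{clm:C3} and combine it with \Cref{planar-complexity} to obtain $|\C_3|\le (r-1)N_3(G)\le 2\cdot 4k=8k$. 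Adding yields $|\C_1|+|\C_3|\le 12k$.

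The proof is entirely a bookkeeping exercise: the only step requiring any care is the case split at $r=6300$, forced by the hypothesis of \Cref{KP-lem}, and the specialization $r=3$ together with the Four-Color Theorem for the planar bound. There is no real obstacle, only the need to package the earlier lemmas cleanly.
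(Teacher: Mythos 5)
Your proposal is correct and follows essentially the same route as the paper: reduce to the $K_{3,6300}$-minor-free case when $r<6300$ so that \Cref{KP-lem} applies, bound $|\C_3|$ via \Cref{clm:C3} together with \Cref{general-complexity}, and in the planar case combine $\chi(G)\le 4$ with \Cref{clm:C3} (for $r=3$) and \Cref{planar-complexity}. The only cosmetic difference is that you make the Four-Color-Theorem step explicit where the paper leaves it implicit in its citation of \Cref{clm:C1}.
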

\begin{proof}
We start with the first statement.
If $G$ is $K_{3,r}$-minor-free, with $r\le 6300$, then also $G$ is $K_{3,6300}$-minor-free. So the bound on $|\C_1|$ follows from \Cref{KP-lem}, and the bound on $|\C_3|$ follows from \Cref{clm:C3} and \Cref{general-complexity}.
Summing these bounds gives the first statement.

Now we prove the second statement.  By \Cref{clm:C1}, we assume that $|\C_1|\le 4k$.
By \Cref{clm2} (with $r:=3$) and \Cref{planar-complexity} we get that $|\C_3|\le (3-1)(4k)=8k$.
Summing these bounds gives the second statement.
\end{proof}
\vfill

\subsection{Bounding the size of \texorpdfstring{$\C_2$}{C2}}
\label{C2:sec}
The rest of the proof consists in showing that the following lemma holds.

\begin{lem}\label{lem:boundC2}
We can find in polynomial time an equivalent instance, formed by possibly deleting some vertices of $\C_2$, 
to get a subset $\C_2'$ for which $|\C_2'|\le \chi(G)(N_2(G)(4r-1)+k)$.
\end{lem}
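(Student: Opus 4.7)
The plan is to delete vertices from ``big'' 2-classes---those $\C_Y$ with $|\C_Y| > \chi(G)(4r-1)$---so that the resulting $\C_2'$ satisfies the target bound while keeping the instance equivalent. Fix a proper $\chi(G)$-coloring of $G[\C_2]$ with color classes $C_1,\dots,C_{\chi(G)}$; each $C_j$ is an independent set in $G$. I construct $\C_2'$ in three stages: (i) keep every vertex of every non-big 2-class; (ii) for each big 2-class $\C_Y$ and each color $j$, retain up to $4r-1$ arbitrarily chosen ``base'' vertices from $C_j \cap \C_Y$; and (iii) for each color $j$, retain up to $k$ additional ``extra'' vertices from $\bigcup_{Y \text{ big}}(C_j \cap \C_Y)$ beyond what is already chosen. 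A non-big class contributes at most $\chi(G)(4r-1)$ vertices by definition, each big class contributes at most $\chi(G)(4r-1)$ base vertices, and the extras total at most $\chi(G)k$, so
\[
|\C_2'| \le \chi(G)(4r-1)N_2(G) + \chi(G)k = \chi(G)(N_2(G)(4r-1) + k),
\]
matching the required bound. Since $G$ is $(r+3)$-colorable in polynomial time by \Cref{KP-lem}, the construction runs in polynomial time.

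For equivalence, the direction $\ISRTJ(G',I_s,I_t) \Rightarrow \ISRTJ(G,I_s,I_t)$ is immediate since $G'\subseteq G$. For the reverse, I mirror the case analysis from the proof of the Main General Theorem. If, starting from $I_s$, no sequence of token jumps can ever unlock a big 2-class, then tokens never reach big-class vertices; since the only deleted vertices lie in big 2-classes, any valid reconfiguration sequence from $I_s$ in $G$ is also valid in $G'$. By reversing sequences, the symmetric argument handles the case where big 2-classes are unreachable from $I_t$. In the remaining case, both $I_s$ and $I_t$ can unlock some big 2-class, and \Cref{key-lem} then certifies that both $\ISRTJ(G,I_s,I_t)$ and $\ISRTJ(G',I_s,I_t)$ are yes-instances.

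The main obstacle is the chain argument that underlies \Cref{key-lem} for this particular $G'$. The difficulty is that a single big class's $4r-1$ base vertices of color $j$ cannot absorb all $k$ tokens at once: each of the $k$ tokens can block up to $r-1$ of them by $K_{3,r}$-minor-freeness, so only $4r-1-k(r-1)$ slots remain free---a negative quantity whenever $k \gg r$. The fix is to migrate tokens incrementally through a sequence of big 2-classes, using the $k$ extras of a carefully chosen color $j$ distributed across them: after unlocking one big class, move a few tokens into its color-$j$ slots, then use the resulting slack plus the extras elsewhere to unlock the next class, and so on. Getting the bookkeeping right---so that the $k$ extras per color plus $4r-1$ base vertices per color per class provide enough cumulative slack to sustain the chain all the way to a common independent set---is the combinatorial heart of the argument, and the specific constant $4r-1$ appears to be tuned precisely for this purpose (leaving a $\Theta(r)$ buffer per color per class to absorb both the two key-vertex tokens and the bounded per-token interference from $K_{3,r}$-minor-freeness during each staged move).
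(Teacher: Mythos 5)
Your construction and the size count are fine (and close in spirit to the paper's Construction~\ref{G'const}, which keeps a single independent set $I$ of size $N_2(G)(3r-2)+k$ spread across the big 2-classes plus an independent set of size $2r$ in each remaining big class), and your reduction of equivalence to the ``both sides can unlock a big class'' trichotomy mirrors the paper's proof of the Main General Theorem exactly. But there is a genuine gap: you explicitly defer ``the combinatorial heart of the argument'' --- the analogue of \Cref{lem:unlock}, i.e.\ the proof of \Cref{key-lem} for your particular $G'$ --- and the sketch you give of how to close it does not contain the idea that makes it work. You correctly observe that $k$ tokens could each block up to $r-1$ of the $O(r)$ retained vertices in a big class, which is fatal if the interference really is $k(r-1)$; but ``migrating incrementally'' does not by itself fix this, because the $k$ tokens must already sit somewhere at the moment you try to move the first few of them into the first unlocked big class.

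The paper's resolution is structural rather than bookkeeping. Take a \emph{shortest} reconfiguration sequence $\mathcal{R}$ from $I_s$ to a state unlocking some big class. Minimality forces (i) the last move to be the one taking a token off a key vertex, and (ii) every earlier move to slide a token to an \emph{adjacent} vertex; hence all vertices gaining or losing tokens during $\mathcal{R}$ (other than the final landing vertex) lie in a single connected component of $G$ minus the big classes and their key vertices. \Cref{lem:minor2vertices} then says a connected set has at most $r-1$ neighbours in any 2-class, so the displaced tokens collectively block only $O(r)$ --- not $O(kr)$ --- vertices of each big class; the tokens that have \emph{not} moved are still on $I_s\subseteq X$ and do not see the interiors of 2-classes at all. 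That is what lets $O(r)$ retained vertices per class suffice, and it is the engine behind the induction through $C_1,\dots,C_\ell$. Without this observation (or an equivalent one), the bookkeeping you defer cannot be completed, so the proposal as written does not establish the lemma.
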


First we construct our graph $G'$, by (possibly) deleting some vertices of $\C_2$, and we show that 
$|\C_2'|\le \chi(G)(N_2(G)(4r-1)+k)$.  This is fairly straightforward.  Afterwards, we show that
$\ISRTJ(G',I_s,I_t)$ is equivalent to $\ISRTJ(G,I_s,I_t)$.  We sketched this latter step above. So all that
remains for us is to handle the harder case: when it is possible starting from $I_s$ to move a token to some
vertex of $\C_2\setminus \C_2'$, and this is also possible starting from $I_t$.

\begin{construction}
    \label{G'const}
    To form $G'$ from $G$, we do the following.
    \begin{enumerate}
        \item[(1)] If $|\C_2|\le \chi(G)(N_2(G)(3r-2)+k)$, then do nothing; that is, $\C_2':=\C_2$.
        \item[(2)] Otherwise, by Pigeonhole pick $I\subseteq \C_2$ such that $I$ is an independent set and $|I|=N_2(G)(3r-2)+k$.
        \item[(3)] A 2-class $\C_Y$ is \EmphE{big}{-1.5mm} if $|\C_Y|\ge \chi(G)(2r-1)+1$; otherwise, $\C_Y$ is \EmphE{small}{1.5mm}.
        \item[(4)] For each small 2-class, do nothing. 
        \item[(5)] For each big 2-class $\C_Y$, do the following.
        \begin{itemize}
            \item[(a)] If $\C_Y$ has at least $3r-1$ vertices of $I$, then 
                delete all vertices of $\C_Y\setminus I$.
            \item[(b)] If $\C_Y$ contains at most $3r-2$ vertices of $I$, then: \\
            We keep in $\C_Y$ an arbitrary independent set of size $2r$ and remove all the other vertices of $\C_Y$. \\
            We remove all the vertices of $I \cap \C_Y$ from $I$.
        \end{itemize}
    \end{enumerate}
\end{construction}
\vfill
\newpage

\begin{prop}
We have $|\C_2'|\le \chi(G)(N_2(G)(4r-1)+k)$.
\end{prop}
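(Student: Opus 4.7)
The plan is to split on which branch of \Cref{G'const} is triggered. If step~(1) applies, then $\C_2'=\C_2$ and the hypothesis of that step already gives $|\C_2'|\le \chi(G)(N_2(G)(3r-2)+k)\le \chi(G)(N_2(G)(4r-1)+k)$. In the remaining case, steps~(2)--(5) were executed and an independent set $I\subseteq \C_2$ with $|I|=N_2(G)(3r-2)+k$ was fixed in step~(2); I would then partition the nonempty 2-classes into the set $S$ of small classes, the set $B_a$ of big classes processed by step~(5a), and the set $B_b$ of big classes processed by step~(5b), so that $|S|+|B_a|+|B_b|=N_2(G)$.

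Next I would read the contribution of each surviving class directly off the construction. Each $\C_Y\in S$ contributes at most $\chi(G)(2r-1)$ (by the definition of ``small''); each $\C_Y\in B_a$ contributes exactly $|\C_Y\cap I|$, since every vertex of $\C_Y\setminus I$ is deleted; and each $\C_Y\in B_b$ contributes exactly $2r$. Because the 2-classes are pairwise disjoint and $I\subseteq \C_2$, summing over $B_a$ gives $\sum_{Y\in B_a}|\C_Y\cap I|\le |I|=N_2(G)(3r-2)+k$. Adding everything up yields
\[
|\C_2'|\le |S|\,\chi(G)(2r-1)+N_2(G)(3r-2)+k+|B_b|\,(2r).
\]

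Finally I would tidy this estimate. Under the mild assumption $\chi(G)\ge 2$ (the degenerate case $\chi(G)=1$ has $G$ edgeless, so $\C_2=\emptyset$ and the statement is vacuous), we have $2r\le \chi(G)(2r-1)$, and combined with $|S|+|B_b|\le N_2(G)$ this gives $|\C_2'|\le N_2(G)\chi(G)(2r-1)+N_2(G)(3r-2)+k$. The target inequality then reduces to
\[
\chi(G)(N_2(G)(4r-1)+k)-\bigl(N_2(G)\chi(G)(2r-1)+N_2(G)(3r-2)+k\bigr)=N_2(G)\bigl(2r\chi(G)-(3r-2)\bigr)+(\chi(G)-1)k\ge 0,
\]
which holds since $2r\chi(G)\ge 4r\ge 3r-2$ whenever $\chi(G)\ge 2$ and $r\ge 1$.

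I do not expect a real obstacle: once the construction is unpacked, the argument is pure bookkeeping. The only point that required some thought was realizing that the slack between $\chi(G)(4r-1)$ (the target) and the natural bound $\chi(G)(2r-1)$ coming from the small and case-(b) classes alone is exactly $2r\chi(G)N_2(G)$, and that this slack must absorb the $(3r-2)N_2(G)$ contribution coming from the crude estimate $\sum_{Y\in B_a}|\C_Y\cap I|\le |I|$; this is what forces the threshold $4r-1$ in the statement rather than anything smaller.
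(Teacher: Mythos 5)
Your proof is correct and follows essentially the same route as the paper's: bound the small and case-(5b) classes by $\chi(G)(2r-1)$ each (at most $N_2(G)$ of them) and the case-(5a) classes by $|I|$ in total. The only difference is that you explicitly justify $2r\le\chi(G)(2r-1)$ via $\chi(G)\ge 2$, a step the paper's proof glosses over by lumping the case-(5b) classes with the small ones.
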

\begin{proof}
    If $|\C_2|\le \chi(G)(N_2(G)(4r-1)+k)$, then we are done, trivially.  So assume not.  Now we define $I$
    and delete vertices of big 2-classes as in \Cref{G'const}.  If a 2-class $\C_Y$ is either small or
    intersects $I$ in at most $3r-2$ vertices, then in $\C_2'$, we keep at most $\chi(G)(2r-1)$ vertices of $\C_Y$.
    Thus, the total number of vertices in these classes (restricted to $G'$) is at most $N_2(G)\chi(G)(2r-1)$.

    If a 2-class $\C_Y$ is big and intersects $I$ in at least $3r-1$ vertices, then in $G'$ we keep in $\C_Y$ only 
    its vertices in $I$.  Thus, the total number of vertices in these classes (restricted to $G'$) is at most $|I|\le N_2(G)(3r-2)+k$. 
    So the total size of $\C_2'$ is at most $N_2(G)\chi(G)(2r-1)+N_2(G)(3r-2)+k\le \chi(G)(N_2(G)(4r-1)+k)$.
\end{proof}

A \EmphE{helpful independent set}{-3mm} is any subset of $I$ of size $k$.
Since $I$ is independent, the following is clear.

\begin{remark}\label{rem:allgraal}
    Any helpful independent set can be transformed into any other.
\end{remark}

For a $2$-class $\C_Y$, we call the 2 vertices in $Y$ the \Emph{key vertices} of $\C_Y$.
To \EmphE{unlock}{4mm} a big 2-class $\C_Y$ is to move tokens to reach an independent set $I'$
such that $|I'\cap N(\C_Y)|\le 1$.  After we unlock a class $\C_Y$, we can move the 
single token in $N(\C_Y)$, if it exists, onto $\C_Y$ and then move additional tokens onto $\C_Y$ (provided
that $\C_Y$ contains a large enough independent set).

\begin{lem}\label{lem:minor2vertices}
    Fix $x,x'\in X$ and let $C$ be the $\{x,x'\}$-class. Every component of $G[V \setminus (C \cup \{x,x'\})]$ is adjacent to at most $r-1$ vertices of $C$.
\end{lem}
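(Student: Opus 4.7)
The plan is to argue by contradiction and exhibit a $K_{3,r}$-minor of $G$ directly. Suppose some component $H$ of $G[V\setminus(C\cup\{x,x'\})]$ is adjacent to at least $r$ distinct vertices of $C$; pick $r$ of them, say $v_1,\ldots,v_r$. I would then take $\{x\}$, $\{x'\}$, and $V(H)$ as the three branch sets on the ``size-$3$'' side of $K_{3,r}$, and the singletons $\{v_1\},\ldots,\{v_r\}$ as the branch sets on the other side.

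To verify that this is indeed a $K_{3,r}$-minor model, three standard conditions need checking. First, disjointness: $V(H)\subseteq V(G)\setminus(C\cup\{x,x'\})$ is disjoint from all other branch sets; the key vertices $x$ and $x'$ are distinct since $\{x,x'\}$ is a $2$-class; and the $v_i$ are distinct vertices of $C$, none equal to $x$ or $x'$. Second, connectedness: the singletons are trivially connected, and $V(H)$ is connected as a component. Third, adjacency: because $v_i\in C$ lies in the $\{x,x'\}$-class we have $N(v_i)\cap X=\{x,x'\}$, giving edges $v_ix$ and $v_ix'$; and by the choice of the $v_i$, each $v_i$ has a neighbor in $H$, providing an edge between $\{v_i\}$ and $V(H)$.

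Together these three observations exhibit a $K_{3,r}$-minor in $G$, contradicting the assumption that $G$ is $K_{3,r}$-minor-free; hence no component can have $r$ or more neighbors in $C$. I do not expect any real obstacle: the whole proof is essentially one picture, and the only subtle point to keep in mind is that $V(H)$ is guaranteed to avoid $C\cup\{x,x'\}$ precisely because $H$ was taken to be a component of the induced subgraph $G[V\setminus(C\cup\{x,x'\})]$, which is what keeps the branch sets disjoint.
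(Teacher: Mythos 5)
Your proof is correct and matches the paper's argument exactly: the paper also builds the $K_{3,r}$-minor with branch sets $\{x\}$, $\{x'\}$, and the offending component on one side, and $r$ of its neighbors in $C$ as singletons on the other. Your write-up just spells out the disjointness, connectedness, and adjacency checks that the paper leaves implicit.
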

\begin{proof}
If not, then $G$ contains a $K_{3,r}$-minor, where one side consists of the vertices $x,x'$, and the 
component $A$ with $r$ neigbhbors in $C$, and the other side consists of $r$ vertices of $N(A) \cap C$.
\end{proof}
\begin{remark}
Having defined $G'$ and bounded it size, all that remains is to prove that this new instance $\ISRTJ(G',I_s,I_t)$ is equivalent to the original $\ISRTJ(G,I_s,I_t)$.  This equivalence is precisely
the assertion of  \Cref{key-lem}, and it 
follows immediately from \Cref{lem:unlock}.
\end{remark}

\begin{lem}\label{lem:unlock}
If in $G$ we can from $I_s$ (resp.~$I_t$) unlock a big class, then in $G'$ we can from $I_s$ (resp.~$I_t$) reach a helpful 
independent set. 
That is, there exists a transformation from $I_s$ into a helpful independent set in $G$ that only uses vertices of $G'$.
\end{lem}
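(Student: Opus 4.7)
By symmetry, the argument for $I_t$ is analogous, so I focus on the case starting from $I_s$. My plan has two phases: first, transform $I_s$ into a state where some big 2-class is unlocked, using only vertices of $G'$; second, from that unlocked state, iteratively push tokens onto $I$ until $k$ of them form a helpful independent set.

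Phase 1. I would take a \emph{shortest} reconfiguration sequence $I_s = J_0, J_1, \ldots, J_\ell = I^\star$ in $G$ such that $I^\star$ unlocks some big 2-class $\C_{Y_0}$. By minimality, no intermediate state $J_i$ with $i<\ell$ unlocks any big 2-class. I would then argue, using the structure of $N(\C_{Y'})$ together with Lemma~\ref{lem:minor2vertices}, that this also forces the sequence never to place a token on any big 2-class vertex: placing a token on $w\in\C_{Y'}$ requires clearing $N(w)$ in the preceding state, and combined with the bound of $r-1$ external neighbors of $\C_{Y'}$ per component this should already yield an unlocking of $\C_{Y'}$ one step earlier, contradicting minimality. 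Since every deleted vertex lies in some big 2-class, the entire sequence is then valid in $G'$.

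Phase 2. Starting from $I^\star$ in $G'$, I would iteratively push tokens onto $I$ by chaining through big 2-classes. The at most one token $v^\star \in I^\star\cap N(\C_{Y_0})$ is first moved off $N(\C_{Y_0})$: in Case (a), by Lemma~\ref{lem:minor2vertices} $v^\star$ blocks at most $r-1$ of the $\ge 3r-1$ vertices of $\C_{Y_0}\cap I$, so a target is available; in Case (b), $v^\star$ is parked on the kept $2r$-vertex independent set as a temporary buffer (and if $v^\star\in Y_0$, I first relocate it to a vacant vertex of $\C_1$ or a small 2-class). With $N(\C_{Y_0})$ now clear, I migrate tokens sitting on $Y_0\cap X$ into further free vertices of $\C_{Y_0}\cap I$; emptying a key vertex $x\in Y_0$ relaxes every 2-class whose key-pair contains $x$, so typically a new big 2-class $\C_{Y_1}$ becomes unlocked, and I repeat the process with $\C_{Y_1}$ in place of $\C_{Y_0}$.

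I expect the main obstacle to lie in promoting the Phase~2 outline into a clean induction. I would maintain the invariant that the current state unlocks some big 2-class admitting at least one free $I$-vertex, and track a potential such as ``(number of tokens still on $X$) $+$ (number of tokens not on $I$)''. Verifying that each chain step strictly decreases this potential --- and in particular that previously absorbed tokens in $I$ are never dislodged by later moves --- requires carefully combining the Case~(a) lower bound $|\C_Y\cap I|\ge 3r-1$, the per-token blocking bound of $r-1$ from Lemma~\ref{lem:minor2vertices}, and the fact that $I$ is spread across enough 2-classes to accommodate all $k$ tokens. Once the potential reaches $0$, all $k$ tokens lie on the independent set $I$, forming a helpful independent set as required.
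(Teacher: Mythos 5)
Your Phase~1 is essentially the paper's argument: take a shortest unlocking sequence, observe that no token can land in a big $2$-class before the final step (since both key vertices of a locked class carry tokens), and conclude the sequence survives in $G'$. That part is fine.

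Phase~2 is where the proof actually lives, and your outline has a genuine gap. The construction only guarantees $|\C_Y\cap I|\ge 3r-1$ for the classes kept via rule (5a), so you can afford to lose only $O(r)$ vertices of $\C_Y\cap I$ to blocking. But by the time the big class is unlocked, the current independent set $I^\star$ may contain up to $k$ tokens that are no longer on $I_s$, and a naive application of Lemma~\ref{lem:minor2vertices} (one token blocks at most $r-1$ vertices of each $2$-class) gives a blocking bound of $k(r-1)$, which swamps $3r-1$. Your write-up only accounts for the single token $v^\star\in N(\C_{Y_0})$ and otherwise invokes ``the per-token blocking bound,'' which does not close this. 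The paper's key observation --- which your proposal is missing --- is that all vertices that gained a token during the minimal sequence (except the final landing vertex $z$) lie in a \emph{single connected component} of the graph obtained by deleting all big classes and their key vertices; hence, by Lemma~\ref{lem:minor2vertices}, they \emph{collectively} block at most $r-1$ vertices of each big class, and $z$ blocks at most $r-1$ more. This is what makes the budget $(3r-1)-3(r-1)=2$ work and leaves two usable vertices $a,b$ in the unlocked class and two usable $I$-vertices in every other big class.

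Separately, your induction mechanism --- ``emptying a key vertex of $Y_0$ relaxes every $2$-class sharing that key vertex, so typically a new big class becomes unlocked'' --- is not justified: the big classes meeting $I$ need not share key vertices with $\C_{Y_0}$, and ``typically'' is not an argument. The paper instead parks tokens on the two reserved vertices of the first unlocked class and then unlocks each remaining big class meeting $I$ directly, using the two reserved $I$-vertices it retains; the final step is the count $|I|-N_2(G)(3r-2)=k$ showing enough of $I$ survives in $G'$ to absorb all tokens. Your potential-function sketch does not substitute for these quantitative steps.
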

\begin{proof}
Assume that there exists an independent set \Emph{$J_0'$} that can be reached from $I_s$ such that $|N(C) \cap J_0'|=1$ 
for some big class $C$. Among all transformations from $I_s$ to $J_0'$, take a transformation \Emph{$\mathcal{R}$} of minimum length.
The case when $\mathcal{R}$ has length 0 (that is $J_0'=I_s$) is easier, so we handle it briefly at the end.
For now we assume that $\mathcal{R}$ has positive length.

We claim that: (i) the last step of $\mathcal{R}$ consists in moving a token on a key vertex $x$ of class $C$ to some vertex 
\Emph{$z$} and, 
(ii) each jump of the transformation except the last one consists of moving a token (from its current vertex) to an adjacent vertex.
Point (i) follows from the minimality of the transformation. At some step in $\mathcal{R}$, we move a token away from a key vertex
of some big 2-class.  If $\mathcal{R}$ continues with further steps, then we can omit these steps, contradicting the minimality of $\mathcal{R}$.
Point (ii) holds because if, prior to the last step in $\mathcal{R}$, we moved a token from a vertex $v$ to a vertex $w$, 
with $w$ not adjacent to $v$, then we should have instead moved the token on the key vertex $x$ to $w$; this gives a shorter transformation, again contradicting the minimality of $\mathcal{R}$.

We form \Emph{$G''$} from $G$ by deleting all vertices in big classes and all key vertices for big classes.
Now (ii) above implies that all vertices that have gained or lost a token during $\mathcal{R}$ must be in the same component 
of $G''$; otherwise we can simply omit from $\mathcal{R}$ all moves in components of $G''$ other than the component where 
we move our token on our final move, which unlocks $C$.

We claim that each big class $C'$ has at most $2(r-1)$ vertices with neighbors in $J_0'\setminus I_s$, as follows.
Let \Emph{$z$} denote the vertex where we move a token on the final step of $\mathcal{R}$ (unlocking $C$).
Note that all vertices of $J'_0 \setminus (I_s\cup \{z\})$ belong to the same component of $G''$; this 
follows from the previous paragraph, since each of these vertices received a token during $\mathcal{R}$ (excluding its last step). 
\Cref{lem:minor2vertices} ensures that each big class $C'$ has at most $r-1$ vertices with neighbors in $J'_0 \setminus 
(I_s\cup\{z\})$.  
\Cref{lem:minor2vertices} also ensures that each big 2-class $C'$ has at most $r-1$ neighbors of $z$.  Thus, the claim holds.
    
By the definition of $J'_0$, there is a big 2-class $C$ that has been unlocked; that is $J'_0$ only contains a token on at most
one of the two key vertices of $C$.  Regardless of whether or not $C$ contained (in $G$) at least $3r-1$ vertices of $I$,
we know that $C$ contains in $G'$ an independent set of size at least $2r$.
From above, at most $2r-2$ of these vertices have neighbors in $J'_0\setminus I_s$.  So at leaset $2r-(2r-2)=2$ of them
have no such neighbor.  We denote by $\{a,b\}$\aside{$a, b$} an independent set of size 2 in $C \setminus N(J'_0 \setminus I_s)$.

By \Cref{lem:minor2vertices}, each of $a,b$ has at most $r-1$ neighbors in each other big class $C'$. 
Actually, let $x,y$ denote the key vertices for the big 2-class $C$ that contains $a,b$.  
For each big class $C'$, with $C'\ne C$, one of $x$ and $y$ is not a key vertex for $C'$; by symmetry, we assume $x$ is not.  
If $|N(\{a,b\})\cap C'|\ge r$, then we get a $K_{3,r}$-minor
by contracting $\{a,b\}$ onto $x$ (with $x$ and the two key vertices of $C'$ as the part of size 3).  
Thus, $|N(\{a,b\})\cap C'| \le r-1$.
So for each class $C'$ containing at least $3r-1$ vertices of $I$, we know that
$|(C' \cap I) \setminus N(\{a,b\} \cup (J'_0 \setminus X))| \ge (3r-1) - (r-1)-(r-1)-(r-1)=2$. 
We denote by $I'$ the set $I \setminus N(\{a,b\})$. 
Note in particular that by construction $I'$ contains at least $2$ vertices in each big 2-class that contains (in $G'$) vertices of $I$. 
        
Let $C_1,\ldots,C_\ell$ be the big classes intersecting $I$ in $G'$ (different from $C$ if $C$ also intersects $I$).  
Let $b$ be a vetex of $I' \cap C_1$ and $x_1$ be a key vertex of $C_1$. 
Note that no vertex of $J_0'$ is adjacent to $b$. So we can move the token\footnote{If $x_1=x$, then we have already moved the token that was on $x_1$, so we now do nothing.} on $x_1$ to $b$ to reach an independent set that we denote by 
$J_0$. Note that $C_1$ is unlocked in $J_0$ (since $|N(C_1) \cap X| =1$.) 
We can now unlock all the big classes $C_1,\ldots C_i, \ldots ,C_\ell$ intersecting $I$ in $G'$, by induction.

At the end of this transformation, we get an independent set $J_\ell$ where every big class containing vertices of $I$ has 
been unlocked. Moreover, by construction, vertices of $J_\ell \setminus I$ have at most $3r-3$ neighors in each big class.
When constructing $G'$ from $G$, we deleted vertices of $I$ only in step (5b).  So the number of vertices of $I$ we deleted is at 
most $N_2(G)(3r-2)$.
Thus, the total number of vertices in $I$ (in $G$) that are unavailable in $G'$ to receive tokens from $J_\ell$ is at most
$N_2(G)(3r-2)$.  So the number of vertices available to receive tokens, from $J_\ell$ is at least $|I|-N_2(G)(3r-2)=k$.
Hence, in $G'$ all the tokens of $J_\ell$ can be moved to $I$, as desired.

Now we remark briefly on the case that $\mathcal{R}$ has length 0; that is, some big 2-class $C$ is already unlocked.
In this case, we just begin immediately moving tokens to $a,b$ in $C$.  Now each big 2-class $C'$ has at most $r-1$ vertices
in $N(\{a,b\})$, but we do not need to worry about neighbors of $z$ in the component of $G''$ where other moves occurred.
So the analysis above still holds.
\end{proof}

\section{Improved Kernel for Planar Graphs}\label{sec:planar}

\subsection{Proof Outline}

Now we provide an algorithm that outputs a smaller kernel in the specific case of planar graphs. 
More precisely, the goal of \Cref{sec:planar} is to prove the following result.

\begin{thm}\label{planar-thm}
On planar graphs $\ISRTJ$ admits a kernel of size $42k$.
\end{thm}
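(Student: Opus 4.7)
The plan is to carry out the Main General Theorem argument specifically for planar graphs (so $r=3$ and $\chi(G)\le 4$ by the Four Color Theorem), tightening every constant to reach the $42k$ bound. I would aim for the split
\[
|V(G')|=|X|+|\C_1|+|\C_2'|+|\C_3|\le 2k+4k+28k+8k=42k,
\]
with $|X|\le 2k$ by definition, $|\C_1|\le 4k$ by \Cref{clm:C1} applied with $\chi(G)\le 4$, and $|\C_3|\le 2\,N_3(G)\le 8k$ by \Cref{clm:C3} (with $r=3$) combined with \Cref{planar-complexity}. All the real work is then in producing $\C_2'\subseteq\C_2$ with $|\C_2'|\le 28k$, because the general bound $\chi(G)(N_2(G)(4r-1)+k)$ is of order $250k$ in the planar case and nowhere near this budget.

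To achieve this I would redesign \Cref{G'const} and redo \Cref{lem:unlock} directly for $r=3$, using two planar-specific tightenings. First, with $r=3$ each invocation of \Cref{lem:minor2vertices} bounds the number of neighbours any given component (or any contracted pair $\{a,b\}$, via the same minor argument) has inside another big 2-class by $r-1=2$, so the total ``blockage'' per big class in the unlocking induction shrinks from the general $3(r-1)$ to $6$. Second, for a 2-class $\C_Y$ with $Y=\{x,y\}$, applying Euler's formula to the planar graph $G[\C_Y\cup Y]$ (which already contains the $2|\C_Y|$ edges of a $K_{2,|\C_Y|}$) gives at most $|\C_Y|+1$ edges inside $\C_Y$ itself; hence $G[\C_Y]$ is unicyclic at worst and has an independent set of size at least $\lfloor|\C_Y|/2\rfloor$. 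This replaces the much weaker $\chi(G)$-colouring pigeonhole and drops the ``big'' threshold from $\chi(G)(2r-1)+1=21$ to roughly $12$, and similarly tightens the size of the retained independent set and the helpful set $I$.

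With these tightenings I would redefine ``big'' with the smaller threshold, retain a constant-size independent set (of the order of $2r=6$) in each big class that does not contain many vertices of $I$, and retain only the $I$-vertices in each big class that does. The helpful independent set $I$ then has size $O(k)$ and, since $N_2(G)\le 6k$, the total contribution to $\C_2'$ is of the order of a small constant times $6k$ plus $O(k)$, which I expect to fit inside $28k$. The main obstacle will be re-verifying the unlocking induction from \Cref{lem:unlock} with these sharper but tighter constants: the planar analogue of the chain $(3r-1)-3(r-1)\ge 2$ must still leave at least two clean $I$-vertices in each successive big class, where now each $r-1$ equals $2$, so a big class declared ``$I$-covered'' must contain at least $8$ $I$-vertices and we must check that the three subtracted sets of size $\le 2$ still leave $\ge 2$ survivors. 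Once that bookkeeping closes and the per-class and per-$I$ budgets are balanced to sit inside $28k$, summing $2k+4k+28k+8k$ yields the claimed kernel of size at most $42k$.
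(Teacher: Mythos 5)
Your target decomposition $2k+4k+28k+8k$ and the two planar facts you invoke (that a $2$-class induces a cycle or disjoint paths, i.e.\ \Cref{rem:pathorcycle}, and the $r=3$ specialization of \Cref{lem:minor2vertices}) are exactly right and are used in the paper. But the core of your plan --- rerunning \Cref{G'const} and \Cref{lem:unlock} with tightened constants --- cannot reach $28k$, and the gap is not mere bookkeeping. In that scheme the helpful independent set $I$ must survive the deletion, in step (5b), of up to $3r-2$ of its vertices from \emph{every} big $2$-class that is not ``$I$-covered,'' so $|I|$ must be at least $N_2(G)(3r-2)+k$. You yourself keep the threshold ``at least $8=3r-1$ vertices of $I$'' for the unlocking induction to leave $2$ survivors, which forces the per-class budget $7=3r-2$; with $N_2(G)$ as large as $3|X|\le 6k$ this gives $|I|\ge 43k$. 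Since the kernel retains all of $I$ (plus a constant per class), the $\C_2'$ budget is blown before you even start. Replacing the $\chi(G)$-pigeonhole by the paths-or-cycle observation shrinks the ``big'' threshold and the per-class retained set, but it does nothing to the $N_2(G)\cdot(3r-2)$ term, which is intrinsic to the unlocking induction: each big class in the chain must donate $\Theta(r)$ vertices of $I$ as intermediate landing spots.

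The paper escapes this with a genuinely different mechanism rather than sharper constants. It constructs an independent set $I$ of size only at most $2k$ that is \emph{weakly clean} (either $3$-clean or $(2,2)$-clean) for both $I_s$ and $I_t$ (\Cref{lem:condition}, requiring only $21k$ vertices in $2$-classes); the point of cleanness is that once a single \emph{activation pair} of tokens is exchanged, the rest of the transformation is greedy and consumes no spare vertices in other classes, so $|I|$ need not scale with $N_2(G)$ at all. The activation exchange itself is carried out by \Cref{lem:locked_classes2,lem:locked_classes3}, which only require unlocking one class of size $\ge 7$ or two classes of size $\ge 5$ sharing a key vertex. The per-class retained vertices are then bounded not uniformly by $N_2(G)$ times a constant but by a charging argument over \emph{important} vertices of $I_s$ (the $6n_1+7n_2+10n_3$ count, with $n_1+n_2+n_3=k$), together with \Cref{cor:condition} for classes having a key vertex outside $I_s$. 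Without some substitute for these ideas --- in particular, without decoupling $|I|$ from $N_2(G)$ --- your approach lands at roughly $80k$ for $\C_2'$ rather than $28k$.
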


The general idea of the proof is similar to that of \Cref{lem:unlock}. We prove that if $G$ is large enough, 
then $G$ contains an independent set $I$ of size at least $k$ with the following property: if we can unlock one of 
the ``big classes" from $I_s$ (or $I_t$), then we can transform $I_s$ (or $I_t$) into a size $k$ subset of $I$. 
To obtain a smaller kernel, we need two main ingredients. First, we give a more subtle way to define this independent set 
$I$. This allows us to find such an independent set $I$ in planar graphs much smaller than required by \Cref{lem:unlock}. 
Second, we prove that, as in \Cref{lem:unlock}, if we can unlock one class that is big enough, then we can transform 
$I_s$ into $I$. The difference from \Cref{lem:unlock} is that our new notion of ``big enough" is actually much smaller;
but this savings comes at the cost of slightly more involved analysis.

We now explain in more detail how we find this set $I$ and describe some of its properties. Let $X:=I_s \cup I_t$.
Let $Y$ be a subset of vertices. The \emph{$X$-neighborhood of $Y$} is $\cup_{y \in Y} N(y) \cap X$.
One of the key ideas in the proof is the concept of (weakly) greedy independent sets. Let $I$ be an independent subset of 
$V \setminus X$ consisting of vertices in $2$-classes.  

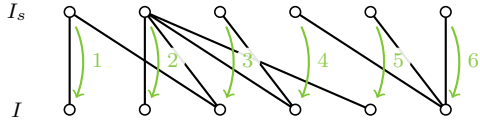
\begin{figure}
\begin{minipage}[t]{.475\textwidth}
\begin{figure}[H]
    \centering
\begin{tikzpicture}[yscale=1.3, every edge quotes/.style = {outer sep=1pt, right, font=\footnotesize, draw=none, fill=white, fill opacity = .9}]
    \foreach \i in {1,...,6}
    {
        \draw (\i,1) node (x\i) {} (\i,0) node (y\i) {};  
        \draw (x\i) ++ (-1*\offset,0) node[lStyle] (xp\i) {} (y\i) ++ (1.5*\offset,0) node[lStyle] (yp\i) {}; 
    }

    \draw[thick] (y1) -- (x1) -- (y3) -- (x2) -- (y2) (y5) -- (x2) -- (y4) -- (x3) (x6) -- (y6) -- (x5) (y6) -- (x4); 

    \draw (.3,1) node[lStyle] {\footnotesize{$I_s$}} (.3,0) node[lStyle] {\footnotesize{$I$}};

    \foreach \i in {1,...,6}
        \draw[thick, mygreen] (xp\i) edge[bend left=30, ->, shorten <= 1.3mm, shorten >= .8mm, "\i"] (yp\i); 

    \draw[white] (0,1.52) --++ (0.1,0);  
    \draw[white] (0,-.45) --++ (0.1,0);
\end{tikzpicture}
\captionsetup{width=.55\textwidth}
    \caption{An $I_s$-greedy independent set, together with the transformation \greenarrow
    }
    \label{fig:greedy}
\end{figure}
\end{minipage}
\begin{minipage}[t]{.45\textwidth}
\begin{figure}[H]
    \centering
\begin{tikzpicture}[yscale=1.3, every edge quotes/.style = {outer sep=1pt, right, font=\footnotesize, draw=none, fill=white, fill opacity = .9}]
    \clip (-.5,-.5) rectangle + (7.5,2);    
    \foreach \i in {1,...,6}
    {
        \draw (\i,1) node (x\i) {} (\i,0) node (y\i) {};  
        \draw (x\i) ++ (2,0) ++ (-1*\offset,0) node[lStyle] (xp\i) {} (y\i) ++ (2,0) ++ (1.5*\offset,0) node[lStyle] (yp\i) {}; 
    }

    \draw[thick] (x1) -- (y1) -- (x2) -- (y2) -- (x1) -- (y4) -- (x3) -- (y6) -- (x4) (y3) -- (x2) -- (y5) -- (x5) (x6) -- (y6);

    \draw (.3,1) node[lStyle] {\footnotesize{$I_s$}} (.3,0) node[lStyle] {\footnotesize{$I$}};

    \foreach \i in {1,...,4}
        \draw[thick, mygreen] (xp\i) edge[bend left=30, ->, shorten <= 1.3mm, shorten >= .8mm, "\i"] (yp\i); 

    \draw[semithick, rounded corners=2mm, dashed, gray] (1.5,.85) -- (2.2,.85) -- (2.2,1.15) -- (.8,1.15) -- (.8, .85) -- (1.5,.85);
    \begin{scope}[yshift=-1cm]
    \draw[semithick, rounded corners=2mm, dashed, gray] (1.5,.85) -- (2.2,.85) -- (2.2,1.15) -- (.8,1.15) -- (.8, .85) -- (1.5,.85);

    \draw (x1) ++ (0,.35) node[lStyle] {\footnotesize{$i_1$}} (x2) ++ (0,.35) node[lStyle] {\footnotesize{$i_2$}};
    \draw (y1) ++ (0,-.325) node[lStyle] {\footnotesize{$j_1$}} (y2) ++ (0,-.325) node[lStyle] {\footnotesize{$j_2$}};
    \end{scope}
\end{tikzpicture}
\captionsetup{width=.55\textwidth}
    \caption{A weakly $I_s$-greedy independent set. {\dottedround} are the activation pairs and \greenarrow is the tranformation.
    }
    \label{fig:weaklygreedy}
\end{figure}
\end{minipage}
\end{figure}

We say that $I$ is \Emph{$I_s$-greedy} (resp.~$I_t$-greedy) if there is a greedy algorithm that moves tokens (one-by-one) from the vertices of $I_s$ (resp.~$I_t$) onto vertices of $I$, while keeping an independent set all throughout the transformation; see Figure~\ref{fig:greedy}. To rephrase, this means that, at each step of the transformation, we can find a vertex of $I_s \setminus I$ that can be replaced by a vertex of $I \setminus I_s$. 
Again equivalently, but in a more structural way, there is an ordering $i_1,\ldots,i_k$ of $I_s$ and $j_1,\ldots,j_k$ of $I$ such that, for every $t \le k$, vertices $j_1,\ldots,j_t,i_{t+1},\ldots,i_k$ form an independent set. 
Note, if $I_1$ and $I_2$ are both independent sets of size $k$, that $I_1$ is $I_2$-greedy if and only if $I_2$ is $I_1$-greedy.  
That is, being greedy is symmetric.

We will also need the following weakening of $I_s$-greedy independent sets.
The set $I$ is \Emph{weakly $I_s$-greedy} if there exist vertices $i_1,i_2 \in I_s$ and $j_1,j_2 \in I$ such that $(I_s \setminus \{i_1,i_2\}) \cup \{ j_1,j_2 \}$ is an independent set, call it $I'$, and $I$ is $I'$-greedy; see Figure~\ref{fig:weaklygreedy}. 
In other words, there are orderings $i_1,\ldots,i_k$ of $I_s$ and $j_1,\ldots,j_k$ of $I$ such that, for every $t\in\{2,\ldots,k\}$, the vertex subset $\{j_1,\ldots,j_t,i_{t+1},\ldots,i_k\}$ is independent. 
Note that an $I_s$-greedy independent set is indeed weakly $I_s$-greedy.  If $I$ is weakly $I_s$-greedy, then the pair $\{i_1,i_2\}$ (resp.~$\{j_1,j_2\}$) is called the \emph{$I_s$-activation pair} (resp.~$I$-activation pair). These pairs ``activate'' the 
transformation in the sense that, if $\{i_1,i_2\}$ has been replaced by $\{ j_1,j_2 \}$, then we can greedily finish the 
transformation from $I_s$ into $I$.
(In the rest of the proof, $I$ might have size larger than $k$, since we simply want to transform $I_s$ into a subset of $I$; 
but imagining that these set sizes are equal keeps all the hardness of the problem.)

Assume that $G$ contains an independent set $I$ of size at least $k$ that is weakly $I_s$-greedy and weakly $I_t$-greedy. If 
$I$ is $I_s$-greedy and $I_t$-greedy, then we can transform $I_s$ into $I_t$, passing through $I$.
But if $I$ is only weakly $I_s$-greedy, then nothing ensures that we can transform $I_s$ into $I$. Nevertheless, by definition, if we can replace the activation vertices of $I_s$ with the activation vertices of $I$ (and similarly for $I_t$), then we can transform 
$I_s$ into $I$. But (i) there might not exist any transformation between $I_s$ and $I$ and, (ii) if there is a transformation, nothing guarantees that some such transformation satisfies this condition.

To overcome point (ii), we exhibit certain special weakly greedy independent sets (called weakly clean independent sets) in 
Section~\ref{sec:clean}. And we also prove that if $G$ is large enough, then either there is no transformation from $I_s$ into $I$ or 
there is a transformation that replaces the activation pair of $I_s$ by the activation pair of $I$ without moving the other tokens. Moreover, in the latter case, this transformation
only uses a constant number of vertices in each $2$-class, as well as (possibly) vertices in classes that are not $2$-classes.
(We optimize this constant knowing that the graph is planar.)
Finally, we prove that every planar graph that is large enough has an independent set $I$ that is both weakly $I_s$-greedy and 
weakly $I_t$-greedy; this completes the proof.

\paragraph{Organization.}
In Section~\ref{sec:firstobs}, we start with a few observations. 
In Section~\ref{sec:clean}, we define clean independent sets, and 
prove that if $G$ is large enough, then it contains an independent set $I$ that is clean both for $I_s$ and for $I_t$. 
In Section~\ref{sec:combining} we combine all these arguments to get the desired smaller kernel, assuming the truth of a key lemma
about transformations of $I_s$ into $I$.  Finally, in Section~\ref{sec:weaklyclean}, we prove this key lemma.

\subsection{First Observations}\label{sec:firstobs}
When constructing a sequence to reconfigure $I_s$ to $I_t$, we would prefer to be able to move tokens onto vertices of distinct 
2-classes independently of each other.  But this may be impossible, because of edges between some of these vertices.
So this first subsection is about how we can allow ourselves this desired freedom.
Throughout this section, we extensively use the following simple remark; it follows from the fact that all 
vertices belonging to a given $2$-class are adjacent to the same two key vertices.

\begin{remark}\label{rem:pathorcycle}
If $G$ is a planar graph, then the vertices of each $2$-class $C$ induce either a cycle or a disjoint union of paths.
\end{remark}

\begin{proof}
    If some vertex $v$ in $C$ has at least 3 neighbors in $C$, say $w_1,w_2,w_3$, then $G$ contains as a subgraph $K_{3,3}$, with
    $\{w_1,w_2,w_3\}$ in one part and with $v$ and the key vertices for $C$ in the other.  This $K_{3,3}$ contradicts that $G$ is
    planar.  And if $C$ induces some cycle $H$, then $H$ separates the key vertices, so no other vertex is adjacent to both 
    key vertices.  Thus, $H$ spans $C$.
\end{proof}

In particular, every $2$-class $C$ contains an independent set of size 
$\lfloor |C|/2 \rfloor$. 
Moreover, if we consider a strict subset of $C$, that is a set $D \subsetneq C$,  then $D$ induces a disjoint union of paths; so $D$ 
admits an independent set of size at least $\lceil |D|/2 \rceil$. The subset $D$ is typically formed from $C$ by deleting $2$ 
vertices. We usually remove vertices to guarantee that $2$-classes are anticomplete to each other (disjoint vertex subsets $A$ and $B$
are \Emph{anticomplete} to each other if no edge of $G$ has one endpoint in $A$ and the other in $B$). Namely, we often use the following remark.

\begin{remark}\label{rem:CC'}
If $C,C'$ are distinct $2$-classes, then the following 2 statements hold.
\begin{enumerate}[(a)]
    \item  The set $N(C) \cap C'$ has size at most $2$. Moreover, if $N(C)\cap C'$ has size $2$ then its two elements must be either
    (i) consecutive vertices on a path of $G[C']$; or  
    (ii) two endpoints of disjoint paths of $G[C']$.    
    \item If $x,y$ are the key vertices of $C$, then, for every 
    $z \in X$ distinct from $x,y$, the union of the classes incident to $z$ has at most two neighbors in $C$.
        (Otherwise, $G$ has a $K_{3,3}$-minor.)
\end{enumerate}
\end{remark}

In the case of planar graphs, we can actually strengthen \Cref{rem:CC'}(a) to apply to more than two classes, and we prove the following version.

\begin{lem}\label{lem:deletion2perclass}
Let $G$ be a planar graph. Let $C_1,\ldots,C_r$ be $2$-classes.  For every $i<r$, there exist $C_i' \subseteq  C_i$ with $|C_i'| \ge |C_i|-2$ such that $C_1',\ldots,C_{r-1}',C_r$ are pairwise anticomplete.
\end{lem}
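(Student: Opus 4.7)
The plan is to prove Lemma~\ref{lem:deletion2perclass} by induction on $r$. The base case $r\le 2$ is immediate from Remark~\ref{rem:CC'}(a): when $r=1$ the claim is vacuous, and when $r=2$ we set $C_1':=C_1\setminus N(C_2)$, losing at most two vertices.

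For the inductive step with $r\ge 3$, I would first ``peel off'' the pair $(C_{r-1},C_r)$ by setting $C_{r-1}':=C_{r-1}\setminus N(C_r)$; this costs at most two vertices by Remark~\ref{rem:CC'}(a) and makes $C_{r-1}'$ anticomplete to $C_r$. Then I would apply the inductive hypothesis to the $(r-1)$-tuple $C_1,\ldots,C_{r-2},C_{r-1}'$, treating $C_{r-1}'$ as the new anchor (noting that Remark~\ref{rem:CC'}(a) passes to subsets, so this is legitimate even though $C_{r-1}'$ is only a subset of a $2$-class). The induction then yields $C_1',\ldots,C_{r-2}'$, each losing at most two vertices from the corresponding $C_i$, pairwise anticomplete to each other and to $C_{r-1}'$.

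The main obstacle is that the induction only ensures the $C_i'$ (for $i\le r-2$) are anticomplete to $C_{r-1}'$, not necessarily to $C_r$. A naive further removal of $N(C_r)\cap C_i'$ from each $C_i'$ could push the total deletions up to four per class, breaking the budget. To overcome this, the plan is to strengthen the inductive hypothesis so that the two deletions chosen inside each class simultaneously cover all cross-edges to later classes \emph{and} to $C_r$. The combination of Remark~\ref{rem:CC'}(a) (the pair of bad vertices between any two $2$-classes lies at consecutive or endpoint positions of $G[C_i]$), Remark~\ref{rem:pathorcycle} (each $G[C_i]$ is a path or cycle), and the planarity of $G$ should force enough compatibility between the different pair-constraints on $C_i$ that a common set of at most two vertices per class suffices.

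The hardest step will be verifying this compatibility: I expect it will require a case analysis of the planar embedding showing that when $C_i$ interacts with many later classes simultaneously, the bad vertices arising from those distinct interactions are geometrically constrained to lie in a small number of ``slots'' along the path or cycle $G[C_i]$, allowing them all to be hit by a single $2$-vertex set. The alternative, should this direct induction prove awkward, is to dualize by choosing on which side of each cross-pair to delete (always the $C_i$-side for pairs involving $C_r$, and otherwise the side picked via a planar-embedding-guided orientation), and to bound the maximum in-degree of this orientation on each $C_i$ ($i<r$) by $1$, so that at most $2$ deletions per class are incurred.
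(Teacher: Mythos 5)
There is a genuine gap: the crux of the lemma is exactly the ``compatibility'' claim you defer to the end, and your proposal never actually establishes it. Your induction, as set up, does not close: after peeling off $C_{r-1}$ the inductive hypothesis only gives anticompleteness to $C_{r-1}'$, and you acknowledge that hitting the edges to $C_r$ as well could cost up to four deletions per class. The fix you suggest --- a ``strengthened inductive hypothesis'' forcing the two deleted vertices of each $C_i$ to simultaneously cover all cross-pairs --- is precisely the content of the lemma, and saying that \Cref{rem:CC'}(a), \Cref{rem:pathorcycle}, and planarity ``should force enough compatibility'' is a statement of hope, not an argument. Likewise the ``alternative'' orientation with in-degree at most $1$ on each $C_i$ is a reformulation of the same unproven claim; you give no reason such an orientation exists.

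The paper's proof is not inductive at all: it fixes one planar drawing of $\bigcup_i C_i\cup X$ with the two key vertices of $C_r$ on the outer face, and for each $i<r$ deletes the two vertices of $C_i$ lying on the outer face of the $K_{2,|C_i|}$ spanned by $C_i$ and its key vertices. The point is that any two classes are either \emph{nested} or \emph{incomparable} in this drawing (because each $G_i$ is a $\theta$-like plane graph separating its inner faces from the rest), and in either case the only possible cross-edges go through the deleted boundary vertices; the choice of outer face guarantees $C_r$ is never the nested one, so $C_r$ need not lose anything. This single global choice is the ``canonical two-vertex slot'' per class that your sketch hopes exists. If you want to salvage your approach, you would need to formulate and prove this embedding-based claim explicitly --- at which point the induction becomes unnecessary.
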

\begin{proof}
Consider a planar drawing of the subgraph of $G$ induced by $\cup_{i \le r} C_i \cup X$ such that the outer face of the drawing contains the two key vertices of $C_r$. 
For every class $C_i$, let $G_i$ denote the subgraph induced by $C_i$ and its 2 key vertices.
A class $C_i$ is \Emph{nested} in $C_j$ if all the vertices of $C_i$ lie within a face of $G_j$ distinct from its outer face. 
    Two classes $C_i$ and $C_j$ are \Emph{incomparable} if the vertices of $C_a$ are on the outer face of $G_b$ whenever $\{a,b\}=\{i,j\}$. For every $2$-class $C$, the vertices of $C$ plus its two key vertices form a $K_{2,|C|}$ (possibly with extra edges among $C$), so every pair of classes is either nested or incomparable.

Now, for every $i < r$, let $B_i$ denote the two vertices of $C_i$
on the boundary of $G_i$, and let $C_i':=C_i\setminus B_i$.
We claim that, $C_i'$ is anticomplete to $C_j'$ for every $j \ne i$ (with $C_r':=C_r$). If the two classes are incomparable, then the only vertices of $C_i$ that can be adjacent to $C_j$ are the vertices of $B_i$ and $B_j$, but the vertices of at least one of these sets have been deleted (and the vertices of both have if $r\notin \{i,j\}$). Otherwise, up to symmetry, $C_i$ is nested in $C_j$. 
By the definition of the planar drawing, $i \ne r$ and the vertices of the outer face of $C_i$ have been deleted.
\end{proof}

Later we will need a slight variation of Lemma~\ref{lem:deletion2perclass}. (Its proof is nearly identical to that above.)

\begin{lem}\label{lem:deletion2perclass_upd}
Let $G$ be a planar graph, and let $C_1,C_2$ be two $2$-classes. If $C$ is the vertex set of a connected subgraph of 
$G[V \setminus (C_1 \cup C_2 \cup X)]$, then the following 2 statements hold.
\begin{enumerate}[(a)]
\item There exist $C_1',C_2'$ such that $|C_i'| \ge |C_i|-2$ for every $i\in\{1,2\}$ and $C_1',C_2',C$ are pairwise anticomplete.
\item If $C_1$ and $C_2$ are anticomplete, then there exists $C'\subseteq C$ such that $|C'|\ge |C|-4$ and $C',C_1, C_2$ are
pairwise anticomplete. 
\end{enumerate}
\end{lem}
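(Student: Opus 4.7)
The plan is to reuse the planar-drawing argument from \Cref{lem:deletion2perclass}, letting the connected set $C$ take over the role played there by the outer class $C_r$ for part (a), and then swap the roles in part (b) to exploit the anticompleteness hypothesis.

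For part (a), I would fix a planar drawing of $G[C_1 \cup C_2 \cup C \cup X]$ and, by re-embedding if necessary, arrange that some vertex of $C$ lies on the boundary of the outer face. Because $C$ is connected and vertex-disjoint from $V(G_i)$, where $G_i := G[C_i \cup \{x_i,y_i\}]$, all of $C$ lies in a single face of $G_i$, which must be the outer face of $G_i$ (since $C$ meets the outer face of the full drawing, which is contained in the outer face of $G_i$). Following the proof of \Cref{lem:deletion2perclass}, let $B_i$ denote the (at most) two vertices of $C_i$ on the boundary of the outer face of $G_i$, and set $C_i' := C_i \setminus B_i$; this gives $|C_i'| \ge |C_i|-2$. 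Any $C_i$-vertex with a neighbor in $C$ must sit on the boundary of the outer face of $G_i$, hence lie in $B_i$, so $C_i'$ is anticomplete to $C$. The pairwise anticompleteness of $C_1'$ and $C_2'$ follows verbatim from the nested/incomparable case analysis in \Cref{lem:deletion2perclass}.

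For part (b), I would exploit the hypothesis that $C_1$ and $C_2$ are anticomplete to swap the roles, so that $C$ is trimmed rather than the $C_i$. Contract $C$ to a single vertex $c$; since $C$ is connected, the resulting graph $G^*$ remains planar. The $K_{3,3}$-minor argument from \Cref{lem:minor2vertices} then forces $c$ to have at most two neighbors in each of $C_1$ and $C_2$: any three neighbors of $c$ inside $C_i$, together with $\{c, x_i, y_i\}$, would form a $K_{3,3}$. Tracing the at most four edges from $c$ in $G^*$ back to $G$ picks out at most four ``attachment vertices'' of $C$, and defining $C'$ by removing them from $C$ gives $|C'| \ge |C|-4$ with $C'$ anticomplete to $C_1 \cup C_2$.

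The main obstacle, and the step that genuinely uses the anticompleteness of $C_1$ and $C_2$, is justifying that a single $C$-vertex suffices as an attachment for each of the (at most) four edges from $c$ in $G^*$: one neighbor $a \in C_1$ of $c$ could a priori correspond to several $C$-vertices in $G$, each adjacent to $a$. The anticompleteness hypothesis ensures that in the planar drawing of $G[C_1 \cup C_2 \cup C \cup X]$ the $C_1$-side and the $C_2$-side of $C$ do not intertwine, so that all $C$-neighbors of a fixed attachment $a$ can be grouped along a single face of the drawing of $G[C]$ and cut off by deleting one well-chosen $C$-vertex, carried out symmetrically for each of the (at most) four attachment points.
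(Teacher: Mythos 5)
Your part (a) is correct and is essentially the argument the authors intend (the paper in fact omits the proof of this lemma, asserting only that it is ``nearly identical'' to that of \Cref{lem:deletion2perclass}): after re-embedding so that $C$ meets the outer face, $C$ lies in the outer face of each $G_i$, so only the two vertices of $C_i$ on the outer boundary of $G_i$ can see $C$, and the nested/incomparable analysis from \Cref{lem:deletion2perclass} disposes of $C_1'$ versus $C_2'$.

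Part (b) has a genuine gap, at exactly the step you flag as the main obstacle. The contraction argument bounds $|N(C)\cap C_i|$, the number of vertices \emph{of $C_i$} that see $C$, by $2$; but the conclusion requires a bound on $|N(C_i)\cap C|$, the number of vertices \emph{of $C$} that see $C_i$, and these are different quantities. Your proposed repair---grouping the $C$-neighbours of an attachment vertex $a\in C_1$ along a face and ``cutting them off by deleting one well-chosen $C$-vertex''---cannot work: anticompleteness is a statement about edges, and deleting one vertex of $C$ does not remove the edges joining the \emph{remaining} vertices of $C$ to $a$. Every vertex of $N(C_1\cup C_2)\cap C$ must itself be deleted, and for a general connected $C$ this set is unbounded. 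Concretely, let $C$ be a path $u_1u_2\cdots u_{2m}$ drawn as a fan around a single vertex $a\in C_1$, with $a$ adjacent to $u_1,u_3,\ldots,u_{2m-1}$; this is planar, $C$ avoids $X\cup C_1\cup C_2$, and $C_2$ can be placed far away so that $C_1,C_2$ are anticomplete, yet for $m\ge 5$ no $C'\subseteq C$ with $|C'|\ge |C|-4$ is anticomplete to $C_1$. So part (b) is not provable as literally stated; it needs the extra structure present in its one application (\Cref{lem:locked_classes3}, Case 1), where the connected set is $A\cup B\cup\{x\}$ for two $2$-classes $A,B$ sharing the key vertex $x$, and the two classes to be avoided share a key vertex $j_1$. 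There the bound of $4$ follows from \Cref{rem:CC'}(b) applied with the roles of the classes exchanged: the union of the classes incident to $j_1$ has at most two neighbours in $A$, at most two in $B$, and none on $x$. Any correct write-up of (b) must either add such a hypothesis on $C$ or replace the last step of your argument with something that genuinely bounds $|N(C_1\cup C_2)\cap C|$.
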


\subsection{Clean Independent Sets}\label{sec:clean}

We define two types of independent sets; these are similar to the helpful independent sets of Section~\ref{sec:main}.
A weakly $I_s$-greedy independent set $I$ is \Emph{$3$-clean} \emph{for $I_s$} if it satisfies both of the 2 conditions below; 
see Figures~\ref{fig:3clean} and~\ref{fig:3cleanB}. 
\begin{enumerate}
\item Some $2$-class $C$ contains at least three vertices of $I$ and,
\item The vertices of the activation pair\footnote{There might exist several orders, and several activation pairs. 
In this case, we select one pair (and corresponding order) that satisfies the condition and call this \emph{the} activation pair.} of $I$ are in $C$.
\end{enumerate}
A weakly $I_s$-greedy independent set $I$ is \Emph{$(2,2)$-clean} \emph{for $I_s$} if it satisfies both of the 2 conditions below. 
\begin{enumerate}
\item Some vertex $x$ is a key vertex for two $2$-classes $C,C'$ that both contain two vertices of $I$ and,
\item The vertices of the activation pair of $I$ are the vertices of $C \cap I$.
\end{enumerate}

\begin{figure}[!h]
\begin{minipage}[t]{.45\textwidth}
    \centering
\begin{tikzpicture}[yscale=1.3, every edge quotes/.style = {outer sep=1pt, right, font=\footnotesize, draw=none, fill=white, fill opacity = .9}, scale=.9]
    \foreach \i in {1,...,7}
    {
        \draw (\i,1) node (x\i) {} (\i,0) node (y\i) {};  
    }

    \draw[thick] (x1) -- (y1) -- (x2) -- (y2) -- (x1) -- (y3) -- (x2) (x3) -- (y4) -- (x4) -- (y5) -- (x3) (x5) -- (y6) -- (x6) -- (y7) -- (x7);

    \draw (.3,1) node[lStyle] {\footnotesize{$I_s$}} (.3,0) node[lStyle] {\footnotesize{$I$}};

    \draw[semithick, rounded corners=1.5mm, dashed, gray] (1.5,.85) -- (2.2,.85) -- (2.2,1.15) -- (.8,1.15) -- (.8, .85) -- (1.5,.85);
    \begin{scope}[yshift=-1cm]
    \draw[semithick, rounded corners=1.5mm, dashed, gray] (1.5,.85) -- (2.2,.85) -- (2.2,1.15) -- (.8,1.15) -- (.8, .85) -- (1.5,.85);
    \end{scope}

    \begin{scope}[yshift=-1cm]
    \draw[semithick, rounded corners=2.75mm, gray] (1.5,.75) -- (3.3,.75) -- (3.3,1.25) -- (.7,1.25) -- (.7, .75) -- (1.5,.75);
    \end{scope}

    \begin{scope}[yshift=-1cm, xshift=3cm]
    \draw[semithick, rounded corners=2.75mm, gray] (1.5,.75) -- (2.3,.75) -- (2.3,1.25) -- (.7,1.25) -- (.7, .75) -- (1.5,.75);
    \end{scope}

    \draw[semithick, gray] (y6) circle (3.25mm and 2.5mm) (y7) circle (3.25mm and 2.5mm);
    \draw (5.5,-.4) node[lStyle, shape=rectangle] {$\underbrace{~~~~~~~~~~~~~~~~~~~~~~~~~~~~~~}$};
    \draw (5.5,-.6) node[lStyle, shape=rectangle] {\footnotesize{other 2-classes}};
    \draw (2,-.5) node[lStyle, shape=rectangle] {\footnotesize{$C$}};
\end{tikzpicture}
\captionsetup{width=.725\textwidth}
    \caption{The independent set $I$ is 3-clean for $I_s$. {\dottedround} are the activation pairs.
    }
    \label{fig:3clean}
\end{minipage}
~~~~~~
\begin{minipage}[t]{.45\textwidth}
    \centering
\begin{tikzpicture}[yscale=1.3, every edge quotes/.style = {outer sep=1pt, right, font=\footnotesize, draw=none, fill=white, fill opacity = .9}, scale=.85]
  \foreach \i in {0,...,7}
    {
        \draw (\i,1) node (x\i) {} (\i,0) node (y\i) {};  
    }

    \draw[thick] (x1) -- (y1) -- (x2) -- (y2) -- (x1) -- (y3) -- (x2) (x3) -- (y4) -- (x4) -- (y5) -- (x3) (x5) -- (y6) -- (x6) -- (y7) -- (x7) (x1) -- (y0) -- (x0) -- (y1);

    \draw (-.7,1) node[lStyle] {\footnotesize{$I_s$}} (-.7,0) node[lStyle] {\footnotesize{$I$}};

    \draw[semithick, rounded corners=1.5mm, dashed, gray] (0.5,.85) -- (1.2,.85) -- (1.2,1.15) -- (-.2,1.15) -- (-.2, .85) -- (0.5,.85);
    \begin{scope}[yshift=-1cm]
    \draw[semithick, rounded corners=1.5mm, dashed, gray] (0.5,.85) -- (1.2,.85) -- (1.2,1.15) -- (-.2,1.15) -- (-.2, .85) -- (0.5,.85);
    \end{scope}

    \begin{scope}[yshift=-1cm]
    \draw[semithick, rounded corners=2.75mm, gray] (0.5,.75) -- (1.3,.75) -- (1.3,1.25) -- (-.3,1.25) -- (-.3, .75) -- (0.5,.75);
    \draw[semithick, rounded corners=2.75mm, gray] (2.5,.75) -- (3.3,.75) -- (3.3,1.25) -- (1.7,1.25) -- (1.7, .75) -- (2.5,.75);
    \end{scope}

    \begin{scope}[yshift=-1cm, xshift=3cm]
    \draw[semithick, rounded corners=2.75mm, gray] (1.5,.75) -- (2.3,.75) -- (2.3,1.25) -- (.7,1.25) -- (.7, .75) -- (1.5,.75);
    \end{scope}

    \draw[semithick, gray] (y6) circle (3.25mm and 2.5mm) (y7) circle (3.25mm and 2.5mm);
    \draw (5.5,-.4) node[lStyle, shape=rectangle] {$\underbrace{~~~~~~~~~~~~~~~~~~~~~~~~~~~~~~}$};
    \draw (5.5,-.6) node[lStyle, shape=rectangle] {\footnotesize{other 2-classes}};
    \draw (1,1.35) node[lStyle, shape=rectangle] {\footnotesize{$x$}};
    \draw (.5,-.5) node[lStyle, shape=rectangle] {\footnotesize{$C$}};
    \draw (2.5,-.5) node[lStyle, shape=rectangle] {\footnotesize{$C'$}};
\end{tikzpicture}
\captionsetup{width=.75\textwidth}
    \caption{The independent set $I$ is (2,2)-clean for $I_s$. {\dottedround} are the activation pairs.
    }
    \label{fig:3cleanB}
\end{minipage}
\end{figure}

The set $I$ is \Emph{weakly clean} \emph{for $I_s$} if it is either $3$-clean or $(2,2)$-clean.
Note that if $I$ is weakly clean for $I_s$, then the $I_s$-activation pair is in $N(C) \cap X$.  The key vertices of $C$ (and $C'$ for $(2,2)$-clean independent sets) might not be in $I_s$. But in that case, we can easily transform $I_s$ into $I$, as we show in the
next lemma.

\begin{lem}\label{lem:hardcase}
Let $I$ be a $3$-clean (resp.~$(2,2)$-clean) independent set for $I_s$. If at least one of the two vertices of $N(C) \cap X$ 
(resp.~$N(C \cup C') \cap X$) is not in $I_s$ then we can transform $I_s$ into $I$ using only vertices of $I_s \cup I$. 
\end{lem}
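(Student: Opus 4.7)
The plan is to reconfigure in two stages: first reach the intermediate set $I' := (I_s \setminus \{i_1,i_2\}) \cup \{j_1,j_2\}$, where $\{i_1,i_2\}$ and $\{j_1,j_2\}$ are the $I_s$- and $I$-activation pairs, and then invoke the fact that $I$ is $I'$-greedy to continue from $I'$ all the way to $I$. The greedy tail only moves tokens from vertices of $I' \setminus I \subseteq I_s$ onto vertices of $I \setminus I' \subseteq I$, so it automatically stays inside $I_s \cup I$. The whole problem therefore reduces to reaching $I'$ from $I_s$ via jumps whose destinations lie inside $I_s \cup I$.

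The key structural observation I would use is that, in both the $3$-clean and $(2,2)$-clean cases, $\{j_1,j_2\} \subseteq C$ for a single $2$-class $C$ with key vertices $\{a,b\}$. Hence the $X$-neighbors of both $j_1$ and $j_2$ are exactly $\{a,b\}$, and independence of $I'$ already forces every key vertex of $C$ lying in $I_s$ to appear in $\{i_1,i_2\}$. I would then split on how many of $a,b$ lie in $I_s$. If at most one of them does (the case the $3$-clean hypothesis directly gives, and also the easy case for $(2,2)$-clean), then the two jumps $i_1 \to j_1$ and $i_2 \to j_2$ succeed in sequence: if say $a \in I_s$, then $a$ must already be one of the $i$'s, so I relabel $i_1 := a$, and then $j_1$'s unique $I_s$-neighbor is vacated before $j_1$ is introduced; the second jump is then clean because both elements of $\{a,b\}$ are gone and $j_2$ is non-adjacent to $j_1$ (both in $I$).

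The remaining situation is the $(2,2)$-clean setting with both $a,b \in I_s$, in which case the hypothesis forces the other key vertex of $C'$ to lie outside $I_s$. Here I would exploit $C'$ by picking any $c \in C' \cap I$ as a staging vertex and performing three jumps: first $a \to c$ (valid because $c$'s $X$-neighbors are $a$, just vacated, and the other key vertex of $C'$, not in $I_s$), then $b \to j_1$ (valid because both of $\{a,b\}$ are now out and $c \in I$ is non-adjacent to $j_1$), and finally $c \to j_2$, landing in $I'$. The staged vertex $c$ will be refilled automatically during the greedy phase, since $c \in I \setminus I'$.

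The main obstacle I anticipate is the bookkeeping in this last subcase: verifying independence at each of the three intermediate states, and checking that after using $c$ as a transit the greedy tail from $I'$ still applies (which it does, by the very definition of weakly greedy). Everything else is forced by the definitions of weakly greedy and of $3$-clean / $(2,2)$-clean independent sets, so concatenating the two or three initial jumps with the greedy tail gives the required reconfiguration from $I_s$ to $I$ entirely inside $I_s \cup I$.
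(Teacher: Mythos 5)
Your proposal is correct and follows essentially the same route as the paper: two direct jumps onto the $I$-activation pair when at most one key vertex of $C$ lies in $I_s$, and in the remaining $(2,2)$-clean case a three-jump detour that parks a token on a vertex of $C'\cap I$ before landing on the activation pair, after which weak greediness finishes the transformation. Your explicit observation that independence of $I'$ forces the key vertices of $C$ lying in $I_s$ into the $I_s$-activation pair is the same justification the paper uses implicitly, and your case split on $|N(C)\cap I_s|$ matches the paper's (which writes $N(C)\cap X$ there, evidently a slip).
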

\begin{proof}
Since $I$ is $I_s$-weakly greedy, we simply need to prove that we can move the tokens from the activation pair of $I_s$ to 
the activation pair of $I$. Indeed, if we can do this, then afterward we can complete the transformation, from $I_s$ into $I$, 
greedily.
    
First suppose that $I$ is $3$-clean. By definition, the activation pair $\{i_1,i_2\}$ of $I$ is in $C$ and the activation pair 
$\{j_1,j_2\}$ of $I_s$ contains the vertex $j_1$ of $N(C) \cap I_s$ (if it exists).  We can replace\footnote{By \emph{replace $a$ with $b$} we mean to move the token from vertex $a$ to vertex $b$; this replaces $a$ with $b$ in the independent set defined by the vertices
currently with a token.} $j_1$ with $i_1$ and replace $j_2$ with $i_2$
 (since $N(C) \cap I_s$ has at most one vertex, namely $j_1$) and the conclusion follows.

Now instead assume that $I$ is $(2,2)$-clean.  If $|N(C) \cap X|\le 1$, then we can perform the same moves and the conclusion follows 
similarly. So we assume that the activation pair of $I_s$ is $\{x,j_2\}$, the two vertices of $N(C) \cap X$.  By assumption not all 
the vertices of $N(C \cup C') \cap X$ are in $I_s$. So the second key vertex of $C'$ is not in $I_s$. Thus, we can replace $x$ with a vertex $y'$ of $I \cap C'$, replace $j_2$ with $i_1$, and finally replace $y$ on $i_2$. This completes the proof.
\end{proof}

From now on, by Lemma~\ref{lem:hardcase}, we assume that the classes $C$ and $C'$ are \Emph{locked}, that is $N(C)\cap I_s$ and $N(C')\cap I_s$ each have size $2$. In particular, the activation pair of $I$ is $N(C) \cap I_s$.
If $I$ is $3$-clean (resp.~$(2,2)$-clean), then the vertex of $I \cap C$ (resp.~the two vertices of $I \cap C'$) that is not in the 
$I$-activation pair (if several such vertices exist, then we arbitrarily choose one of them) is called the \Emph{auxiliary activation vertex} (resp.~vertices) of $I$. Moreover, if $I$ is $(2,2)$-clean, then the key vertex of $C'$ that is not a key vertex of $C$ is called the \emph{auxiliary activation vertex of $I_s$}.

An independent set is \Emph{clean} \emph{for $I_s$} if it is $I_s$-greedy, $3$-clean for $I_s$, or $(2,2)$-clean for $I_s$. 
As we already mentioned, it is \emph{weakly clean} if it is clean but not $I_s$-greedy.
And if $I$ is $I_s$-greedy, then we can transform $I$ into $I_s$. We now explain informally how we will make use 
of an independent set that is $3$-clean or $(2,2)$-clean for $I_s$.  We will argue that if we can unlock a large enough class $D$, 
then (if we consider a good transformation) we can replace an $X$-vertex adjacent to $C$ (resp.~replace $x$, when $I$ is 
$(2,2)$-clean) with a vertex in the class $D$. This fact, together with the fact that $C \cap I$ (resp.~$(C \cup C') \cap I$) is large enough will allow us to find a transformation from $I_s$ into $I$. 
We formalize this intuition with the following simple example.

\begin{lem}\label{lem:atmost8}
Let $G$ be a $K_{3,3}$-free graph, and let an independent set $I$ be clean for $I_s$. 
If $|V(G) \setminus N(I_s)|\ge 3$, then we can transform $I_s$ into $I$ using at most one vertex that is not in $I_s \cup I_t$.  
\end{lem}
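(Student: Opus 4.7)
My plan is to split on the type of cleanness of $I$. If $I$ is $I_s$-greedy, the canonical greedy transformation uses only vertices of $I_s\cup I$ and no extra vertex is required, so I focus on the weakly clean cases. In these cases, Lemma~\ref{lem:hardcase} lets me assume the distinguished $2$-class $C$ (together with $C'$ in the $(2,2)$-clean case) is locked, so the $I_s$-activation pair is the two key vertices $\{x,y\}$ of $C$. The hypothesis provides three vertices $w_1,w_2,w_3$ outside $N(I_s)$ (and not in $I_s$), which serve as candidate ``parking'' vertices; the strategy is to use at most one of them to bridge the activation step, after which the weakly greedy order finishes the transformation without touching any further new vertex.

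For the $3$-clean case, let $\{a_1,a_2\}\subseteq C\cap I$ be the $I$-activation pair and $a_3\in(C\cap I)\setminus\{a_1,a_2\}$ the auxiliary activation vertex. I claim that some $w_i$ is non-adjacent to some $a\in\{a_1,a_2,a_3\}$: otherwise $\{w_1,w_2,w_3\}$ and $\{a_1,a_2,a_3\}$ would realize $K_{3,3}$ as a subgraph of $G$, contradicting $K_{3,3}$-freeness. Up to relabeling $a_1\leftrightarrow a_2$, I may assume $a\in\{a_1,a_3\}$. If $a=a_1$, I perform the swap $x\to w_i,\ y\to a_1,\ w_i\to a_2$; if $a=a_3$, I perform $x\to w_i,\ y\to a_3,\ w_i\to a_1,\ a_3\to a_2$. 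Either sequence reaches the activated set $(I_s\setminus\{x,y\})\cup\{a_1,a_2\}$, from which the weakly $I_s$-greedy order completes the transformation to $I$ using no further vertex outside $I_s\cup I$; in particular $a_3$ is reinserted at its scheduled position in the greedy tail.

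For the $(2,2)$-clean case, let $\{a_1,a_2\}=C\cap I$ and $\{b_1,b_2\}=C'\cap I$. If some $w_i$ is non-adjacent to $a_1$ or $a_2$, I proceed as in the $3$-clean case. Otherwise every $w_i$ is adjacent to both $a_1$ and $a_2$; applying $K_{3,3}$-freeness to $\{w_1,w_2,w_3\}$ versus $\{a_1,a_2,b_1\}$ then yields some $w_1\not\sim b_1$. I then execute the four-move activation $x\to w_1,\ y'\to b_1,\ w_1\to b_2,\ y\to a_1$, followed by $v\to a_2$ for an appropriate $v\in I_s\setminus\{x,y,y'\}$, and finally the greedy suffix of the weakly $I_s$-greedy order.

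The main obstacle is verifying this last subcase, since the activation reorders the canonical weakly greedy sequence and intersperses two moves targeting the auxiliary class $C'$. The verification rests on two structural observations: that $N(b_j)\cap X=\{x,y'\}$ for $j\in\{1,2\}$, so that once $x$ has been parked and $y'$ vacated, tokens slide freely into $b_1,b_2$; and that the original weakly greedy condition, applied at the appropriate step, guarantees the independence of $\{a_1,a_2,b_1,b_2\}\cup(I_s\setminus\{x,y,y',v\})$, which is exactly what is needed to resume the greedy continuation while keeping $w_1$ as the only vertex used outside $I_s\cup I$.
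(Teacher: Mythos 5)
Your handling of the $I_s$-greedy case, the $3$-clean case, and the subcase of $(2,2)$-clean where some parking vertex misses the $I$-activation pair all track the paper's proof: the same $K_{3,3}$-freeness argument produces a non-edge between a vertex of $V(G)\setminus N(I_s)$ and one of the (auxiliary) activation vertices, and your short move sequences land on the canonical configuration $(I_s\setminus\{x,y\})\cup\{a_1,a_2\}$, from which weak greediness finishes the transformation.

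The gap is in the remaining $(2,2)$-clean subcase. Your activation $x\to w_1$, $y'\to b_1$, $w_1\to b_2$, $y\to a_1$, $v\to a_2$ is move-by-move valid, but it terminates at $S=(I_s\setminus\{x,y,y',v\})\cup\{a_1,a_2,b_1,b_2\}$, which is \emph{not} the configuration $\{j_1,j_2,i_3,\dots,i_k\}$ that the definition of weak greediness certifies. Weak greediness only guarantees that the specific sets $\{j_1,\dots,j_t,i_{t+1},\dots,i_k\}$ are independent, and hence that the greedy suffix is executable from the canonical post-activation set; independence of $S$ alone --- your stated justification --- is necessary but not sufficient for a step-by-step continuation starting from $S$. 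Unless the fixed ordering happens to place $b_1,b_2$ at positions $j_3,j_4$ and $y',v$ at positions $i_3,i_4$ (true for the ordering built in the proof of Lemma~\ref{lem:condition}, but not required by the definition of $(2,2)$-clean, which is all this lemma assumes), you owe an argument that the tail can be rerouted around the displaced tokens on $b_1,b_2$ and the prematurely vacated $y',v$. The paper avoids this issue entirely: it uses $C'\cap I$ only as temporary parking ($x\to a$, $j_2\to i_3$, $a\to i_4$, then moves into $C$, and finally $i_4\to j_2$ returns the borrowed token to the second key vertex of $C'$), so the activation nets exactly the swap of $\{x,j_1\}$ for $\{i_1,i_2\}$ and the greedy tail applies verbatim. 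Your argument is repairable, either by adopting that token-return trick or by explicitly constructing a weakly greedy ordering adapted to $S$, but as written the final step does not follow.
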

\begin{proof}
The conclusion follows if the independent set $I$ is $I_s$-greedy. So we instead assume that $I$ is weakly $I_s$-greedy. 
Thus, if we can replace the activation pair of $I_s$ by the activation pair of $I$, then we can complete the transformation greedily. 
The rest of the proof consists in showing that we can do this. Let $\{i_1,i_2\}$ be the $I$-activation pair and $i_3$ be an auxiliary 
$I$-activation vertex (and $i_4$ be the other if $I$ is $(2,2)$-clean). We denote by $C$ the class containing $i_1,i_2$ (and by 
$C'$ the class containing $i_3$ if $I$ is $(2,2)$-clean). Since $G$ is $K_{3,3}$-free, there is a non-edge between some vertex $a\in 
V(G) \setminus N(I_s)$ and some vertex $i_b$ of $\{i_1,i_2,i_3\}$.

If $I$ is $3$-clean, then we let $\{j_1,j_2\}$ be the activation pair of $I_s$ (these are also the key vertices of $C$).
We replace $j_1$ with $a$, replace $j_2$ with $i_b$, replace $a$ with some vertex $i_c$ in $\{i_1, i_2 \} \setminus i_b$, 
and finally replace $i_b$ with $\{i_1,i_2\} \setminus i_c$ if $i_b=i_3$.
    
If $I$ is $(2,2)$-clean, then we let $\{x,j_1\}$ be the $I_s$-activation pair, and let $j_2$ be the vertex of $N(C') \cap I_s$ 
distinct from $x$. If $i_b$ is in the $I$-activation pair, then we can conclude as above for $3$-clean independent sets. So we assume 
that $i_b=i_3$.  We replace $x$ with $a$, replace $j_2$ with $i_3$, replace $a$ with $i_4$, replace $j_1$ with $i_1$, 
replace $i_3$ with $i_2$, and replacer $i_4$ with $j_2$.  
\end{proof}

To conclude this section, we prove that if $G$ is large enough, then the graph contains an independent set that is clean for both 
$I_s$ and $I_t$.
Namely, the following holds.

\begin{lem}\label{lem:condition}
    Let $G$ be a planar graph and $I_s,I_t$ be two independent sets of size $k$. If the number of vertices in $2$-classes is at 
    least $21k$, then there exists an independent set $I$ of size at most $2k$ that is both clean for $I_s$ and clean for $I_t$.
\end{lem}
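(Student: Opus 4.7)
The plan is to build $I$ as an independent set contained in $\C_2$ by selecting vertices class by class, then verify the two cleanness conditions. First, by Remark~\ref{rem:pathorcycle} each 2-class $C$ induces a path or cycle, so contains an independent set of size at least $\lfloor|C|/2\rfloor$; combined with Lemma~\ref{lem:deletion2perclass}, up to deleting two vertices from each of all but one of the 2-classes these independent subsets live in pairwise anticomplete classes. Since $|\C_2|\ge 21k$ and $N_2(G)\le 6k$ by Lemma~\ref{planar-complexity}, this yields a pool of roughly $(21k-12k)/2\approx 4.5k$ pairwise non-adjacent candidate vertices spread across the 2-classes; $I$ will be selected from this pool, so automatically $|I|\le 2k$ once we cap the selection.

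Second, I would classify the 2-classes by the type of their key vertices in $S:=I_s\setminus I_t$, $T:=I_t\setminus I_s$, $B:=I_s\cap I_t$. This controls how many $I_s$- and $I_t$-neighbors a vertex of the class has, and hence how it can be used in a (weakly) greedy transformation. The heart of the argument is a case split that identifies an ``activation site'' for both sides. If some 2-class $C$ is large enough to house three pairwise non-adjacent vertices, use two of them as an $I$-activation pair and the third as an auxiliary vertex, producing a 3-clean structure with respect to $I_s$ (and, via another triple in $C$ or in a second large class, with respect to $I_t$). Otherwise all 2-classes are small; then by counting, the number of classes is large compared to $|X|\le 2k$, and by pigeonhole some key vertex $x\in X$ is shared by two classes $C,C'$ each containing two independent vertices, giving the structure needed for $(2,2)$-cleanness.

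Third, I would complete $I$ greedily by adding further vertices from the pool, restricting to 2-classes whose $I_s$-key vertices are among the chosen $I_s$-activation pair (so that after the swap the additional vertices create no $I_s$-adjacency), and symmetrically for $I_t$. Since after the activation swap only at most two vertices of $I_s$ (resp.~$I_t$) are ``blocked'' by the $I$-vertices, the weak greediness criterion reduces to checking a straightforward bipartite condition, which the classification of classes makes easy to verify. Using at most $O(1)$ vertices per class and stopping once $|I|$ reaches $k$ gives $|I|\le 2k$ comfortably.

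The main obstacle is the simultaneity: the $I_s$- and $I_t$-activation pairs have to be chosen in a coordinated way so that a single independent set $I$ is weakly clean on both sides, and the 2-classes used in the completion must be compatible with both activation swaps at once. This forces careful bookkeeping based on the key-vertex types of 2-classes (which types contribute to which side, and which are neutral), and is where the threshold $21k$ is calibrated, relying throughout on the planar bounds $N_2(G)\le 6k$ from Lemma~\ref{planar-complexity} and the anticomplete decomposition from Lemma~\ref{lem:deletion2perclass}.
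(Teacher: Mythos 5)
Your opening move matches the paper exactly: delete at most two vertices per $2$-class via \Cref{lem:deletion2perclass}, take half of what remains in each class via \Cref{rem:pathorcycle}, and obtain a pool $I_0$ of at least $(21k-12k)/2=4.5k$ pairwise non-adjacent vertices. The target structures ($3$-clean via a class with three pool vertices, $(2,2)$-clean via pigeonhole on shared key vertices) are also the right ones. But there are two genuine gaps. First, the ``simultaneity'' you flag as the main obstacle is not an obstacle at all, and the coordination machinery you propose (classifying classes by key-vertex types in $I_s\setminus I_t$, $I_t\setminus I_s$, $I_s\cap I_t$) is never actually supplied. The paper's observation is that weak greediness and cleanness are monotone under adding vertices to $I$, so one simply finds a size-$k$ set $I'\subseteq I_0$ clean for $I_s$, a size-$k$ set $I''\subseteq I_0$ clean for $I_t$, and takes $I:=I'\cup I''$; independence is free because both live inside $I_0$. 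This decoupling is what makes the lemma provable without any bookkeeping across the two sides, and without it your argument does not close.

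Second, your completion step does not establish weak greediness. You propose to extend $I$ using only $2$-classes ``whose $I_s$-key vertices are among the chosen $I_s$-activation pair''; that restricts you to at most a constant number of classes and cannot reach size $k$. A pool vertex in a bad $2$-class $C'$ is adjacent to \emph{both} key vertices of $C'$ in $I_s$, so it can only enter the ordering after both of those tokens have left; verifying that a valid ordering exists is exactly the delicate part. The paper handles it by splitting bad classes into those with at least two pool vertices (at most $\ell\le 0.5k$ of them) and ``small'' ones with at most one, showing the non-small ones alone contribute at least $0.5k+\ell+1\ge 2\ell+1$ pool vertices (which also yields the class with three pool vertices by pigeonhole), and then checking that after unlocking these classes at least $0.5k-\ell+1$ of their pool vertices remain token-free, enough to absorb one token from each of the $0.5k-\ell-1$ small bad classes still needed to reach size $k$. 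This counting is where the threshold $21k$ is actually spent; calling it ``a straightforward bipartite condition'' leaves the core of the proof undone.
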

\begin{proof}
Note that if an independent set $I$ is weakly $I_s$-greedy (resp.~$I_t$-greedy), then $I$ remains so when we add vertices to $I$.  
So it suffices to find an independent set $I'$ of size $k$ that is clean for $I_s$, find another $I''$ for $I_t$, and take their 
union, as long as this union is also independent.  To ensure this union is indeed independent, we first find a large 
independent set $I_0$ and choose the clean independent sets $I'$ and $I''$ from within $I_0$.

Let $X:=I_s \cup I_t$.\aside{$X$}
By \Cref{planar-complexity}, the number of 2-classes in $G$ is at most $3|X| \le 6k$.  For each $2$-class $C$, we call $C$ a 
\Emph{good} \emph{class for $I_s$} (resp.~\emph{for $I_t$}) if $C$ has at most one neighbor in $I_s$. 
A class that is not good is called \EmphE{bad}{4mm}. 

By \Cref{lem:deletion2perclass}, after removing at most $2$ vertices per class, we assume that all the $2$-classes are 
anticomplete to each other.   The total number of vertices that we remove is at most $2|N_2(X)|\le 6|X|\le 12k$.
Moreover, by \Cref{rem:pathorcycle}, the remaining vertices of each 2-class induce a disjoint union of paths, so each
contains an independent set with at least half its vertices.  We denote by \Emph{$I_0$} the union of these independent sets; 
note that $I_0$ is also independent and $|I_0|\ge (21k-12k)/2=4.5k$.
As we mentioned above, we choose from among $I_0$ an independent set of size $k$ that is clean for $I_s$, and we do the same
for $I_t$; the union of these two sets is the desired independent set $I$, with size at most $2k$.  So below it suffices only
to construct the independent subset of $I_0$ that is clean for $I_s$.
    
If the number of vertices of $I_0$ appearing in good classes for $I_s$ is at least $k$, then we take an arbitrary set of $k$ of 
them; this set is $I_s$-greedy, so we are done.
Thus, we assume instead that the number of vertices of $I_0$ in good classes is at most $k-1$.
Hence, the number of vertices of $I_0$ that are in bad classes is at least $4.5k-(k-1)=3.5k+1$.

If more than $0.5k$ bad classes each have at least $2$ vertices in $I_0$, then by Pigeonhole 2 such bad $2$-classes $C,C'$ share a 
key vertex of $I_s$.  To form an independent set that is $(2,2)$-clean for $I_s$, we take the union of the vertices of $I_0$ 
in $C,C'$, and then continue picking a bad class (with at least two vertices of $I_0$) and adding all of its vertices, until we 
reach a set of size $k$.  To see that this set is $(2,2)$-clean, we start the ordering with the vertices of $C\cup C'$.

So we assume instead that all but at most $0.5k$ bad $2$-classes contain at most $1$ vertex of $I_0$; we call them 
\emph{small classes}.  We denote by \Emph{$\ell$} the number of non-small bad classes; thus, $\ell \le 0.5k$.
By the proof of \Cref{planar-complexity}, at most $3k$ classes are bad; so the number of small bad classes is at most $3k-\ell$.
Thus, the number of vertices of $I_0$ in small bad classes is at most $3k-\ell$.  So the number of vertices of $I_0$ in non-small 
bad classes is at least $3.5k+1-(3k-\ell)=
0.5k+\ell+1\ge 2\ell+1$, since $\ell\le 0.5k$.  Thus, by Pigeonhole, some bad $2$-class $C$ has at least $3$ vertices of $I_0$.
To form our 3-clean independent set (for $I_s$), we take all of the vertices of $I_0$ in $C$ and add vertices of $I_0$ from bad 
2-classes that are not small, up to a set of size $k$ (or when no remaining bad class has two vertices of $I_0$).
    
To order $I$, we begin with the vertices in a class of size at least 3, and continue to other bad 2-classes, always
making all vertices in a bad 2-class successive in the order.  If we reach a set of size $k$, then this independent set $I$ is 
weakly $I_s$-greedy, and we are done; so we assume we do not reach a set of size $k$.  
    
However, we do reach a set $I$ of size at least $0.5k+\ell+1$. So we need to add to $I$ at most $0.5k-\ell-1$ vertices.  
After moving in the tokens from all key vertices needed to unlock $I$, these $0.5k+\ell+1$ vertices of $I$ have at least
$0.5k+\ell+1-2\ell=0.5k-\ell+1$ vertices with no token.  Thus we can move to them one token from each of $0.5k-\ell-1$ small 
bad classes.  Afterwards, all of these small bad classes are unlocked, so the remaining tokens can move to these vertices.
Thus, the resulting set is weakly $I_s$-greedy.
\end{proof}

\begin{cor}
\label{cor:condition}
In fact, if there exist $q$ $2$-classes that each have a key vertex outside of $I_s$, and the union of these classes has size at 
least $2q+2k$, then $I_s$ is $I$-greedy.
\end{cor}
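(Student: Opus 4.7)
The plan is to exhibit a size-$k$ independent set $I$ together with orderings witnessing that $I$ is $I_s$-greedy; since greediness is symmetric between equal-size independent sets, this is exactly what the corollary asks for.

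I would first apply \Cref{lem:deletion2perclass} to the $q$ given $2$-classes $C_1,\ldots,C_q$ to produce pairwise anticomplete subsets $C_1',\ldots,C_{q-1}',C_q$ with $|C_i'|\ge|C_i|-2$ for $i<q$. The total size drops by at most $2(q-1)$, so $\sum_i|C_i'|\ge 2q+2k-2(q-1)=2k+2$. Each $C_i'$ with $i<q$ is a strict subset of a $2$-class and hence, by the refinement of \Cref{rem:pathorcycle}, induces a disjoint union of paths; $C_q$ itself is either a disjoint union of paths or a cycle. Taking a maximum independent set in each component yields an independent set of size at least $k+1$ inside $\bigcup_i C_i'$ (we lose at most one vertex if $C_q$ happens to be an odd cycle). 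I take $I$ to be any size-$k$ subset of this independent set.

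To show $I$ is $I_s$-greedy, let $T:=\{t\in I_s:N(t)\cap I\ne\emptyset\}$. Each such $t$ must be a key vertex of some $C_i$ that meets $I$, and since every $C_i$ is good for $I_s$ (it has at most one key vertex in $I_s$), no vertex of $I$ has two neighbors in $I_s$. Hence the family $\{N(t)\cap I\}_{t\in T}$ consists of pairwise disjoint nonempty sets, and in particular $|T|\le|I|=k$. The transformation proceeds in two phases: first, in any order, for each $t\in T$ jump the token from $t$ to a distinct vertex $w_t\in N(t)\cap I$ (possible by disjointness); second, in any order, jump each remaining $v\in I_s\setminus T$ to an unused vertex of $I$.

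The main obstacle is showing that every intermediate configuration remains independent. In the first phase, when jumping $t\to w_t$, the $X$-neighbors of $w_t$ are the two key vertices of $w_t$'s class $C_i$; one is $t$ (just removed), and the other is not in $I_s$ by goodness. Potential non-$X$-conflicts with previously placed $w_{t'}\in C_j'$ are ruled out by anticompleteness when $j\ne i$ and by independence of $I$ inside $C_i'$ when $j=i$. In the second phase, the moved token $v\in I_s\setminus T$ has no neighbor in $I$ by the definition of $T$, and the receiving $w\in I$ has at most one $I_s$-neighbor, which (if it exists) lies in $T$ and has already been moved out in the first phase. All of this depends essentially on the good-class hypothesis, which caps each $I$-vertex's $I_s$-neighborhood at one vertex.
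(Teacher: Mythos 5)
Your two-phase token argument is sound: the hypothesis that each of the $q$ classes has a key vertex outside $I_s$ caps every $I$-vertex's $I_s$-neighborhood at one vertex, the sets $N(t)\cap I$ are therefore pairwise disjoint, and the check that each intermediate configuration stays independent (anticompleteness across classes, independence within, and the second key vertex never carrying a token) is complete. This is a genuinely different route from the paper's: the paper pigeonholes to find a single unlocked class of size at least $3$, parks three tokens there, and then invokes the iterative ``unlock the small bad classes one by one'' machinery from the last two paragraphs of the proof of \Cref{lem:condition}; you instead spread the independent set over all $q$ classes after the surgery of \Cref{lem:deletion2perclass} and give a direct, self-contained greedy schedule. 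Your version is closer in spirit to the first case of the proof of \Cref{lem:condition} (``good classes contain at least $k$ vertices of $I_0$''), and it is arguably cleaner and more explicit than what the paper writes. Two cosmetic points: the statement of \Cref{lem:deletion2perclass} only guarantees $|C_i'|\ge|C_i|-2$, so ``strict subset'' (hence ``union of paths'') needs the construction from that lemma's proof, or one extra deletion; and you should say a word about why your $k+1/2$ lower bound rounds up to $k+1$.

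The genuine issue is which $I$ the corollary is about. You construct a fresh independent set $I$ inside the $q$ classes and prove that \emph{it} is $I_s$-greedy, i.e.\ an existential statement. But the corollary is invoked in the proof of the Main Planar Theorem contrapositively: the specific set $I$ produced by \Cref{lem:condition} (clean for both $I_s$ and $I_t$) is assumed \emph{not} to be $I_s$-greedy, and from this the paper concludes that the classes with a key vertex outside $I_s$ have total size below $2q+2k$. Knowing only that \emph{some other} independent set is $I_s$-greedy does not contradict that assumption, so your version of the conclusion is too weak for its intended use. The fix is not hard — observe that your hypothesis forces the set $I_0$ built in the proof of \Cref{lem:condition} to have at least $k$ vertices in good classes, so the construction there already returns an $I_s$-greedy set; equivalently, choose your size-$k$ set inside that same $I_0$ and note that adjoining the $I_t$-clean part preserves both $I_s$-greediness and cleanness for $I_t$ — but as written the proof establishes a different statement than the one the paper needs.
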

\begin{proof}
By Pigeonhole, one of these classes $C$ has size at least $\lceil(2q+2k)/q\rceil \ge 3$.  Recall that $C$ is $I_s$-unlocked, since
$C$ has a key vertex outside of $I_s$.  So we can start by replacing the first 3 vertices of $I_s$ with these 3 vertices of $C$.
The proof that we can finish the desired order of $I$ is precisely the final 2 paragraphs of the previous proof.
\end{proof}

\subsection{Size of the Kernel}\label{sec:combining}
In the present section, we finish the proof of our Main Planar Theorem, assuming the truth of \Cref{lem:reduc-equiv}, below.  And 
in the next (and final) section, we prove the truth of \Cref{lem:reduc-equiv}.
A vertex is \emph{important} if it is a key vertex for either (a) at least one $2$-class of size at least $7$ or (b) at least two
$2$-classes of size at least $5$.

\begin{lem}
\label{lem:reduc-equiv}
Let $G$ be a planar graph and $I$ be an independent set that is weakly clean for $I_s$.  We can transform $I_s$ into an independent
subset of $I$ whenever we can unlock from $I_s$ either (a) a $2$-class of size at least $7$ or (b) a $2$-class of size at least $5$
with a key vertex adjacent to a second $2$-class of size at least $5$.  
    Moreover, such a transformation still exists in every subgraph $G'$ formed from $G$ by deleting vertices (outside of $I$)
in $2$-classes of size at least $5$ such that every vertex important in $G$ remains important in $G'$.
\end{lem}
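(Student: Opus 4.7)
Let $C$ (and, in the $(2,2)$-clean case, also $C'$) be the $2$-classes containing the $I$-activation pair. By \Cref{lem:hardcase} we may assume both $C$ and $C'$ are locked in $I_s$, so the $I_s$-activation pair is exactly $N(C)\cap I_s$ (or $\{x,j_2\}$ in the $(2,2)$-clean case). Since $I$ is weakly $I_s$-greedy, once we can reach an independent set in which the $I_s$-activation pair has been replaced by the $I$-activation pair, weak greediness finishes the transformation into a subset of $I$. So the entire task reduces to engineering this single swap.

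\textbf{Controlled unlocking and parking.} Let $D$ be a $2$-class satisfying hypothesis (a) or (b) that we can unlock from $I_s$, and take a reconfiguration sequence $\mathcal{R}$ of minimum length that unlocks $D$. Mimicking the minimality argument in the proof of \Cref{lem:unlock}, the last jump of $\mathcal{R}$ moves a token off a key vertex of $D$ onto some vertex $z$, and all earlier jumps slide along edges in a single connected component of $G''$, the graph obtained from $G$ by deleting all big $2$-classes together with their key vertices. Let $J_0'$ denote the resulting independent set. Applying \Cref{lem:minor2vertices} twice, once to the component of $\mathcal{R}$ and once to $z$, gives that each $2$-class has at most $2(r-1)=4$ ``blocked'' vertices, i.e.~vertices with a neighbor in $(J_0'\setminus I_s)\cup\{z\}$. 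In case (a), since $|D|\ge 7$, \Cref{rem:pathorcycle} supplies at least three unblocked vertices of $D$, from which we extract a non-adjacent pair $a,b$. In case (b), the shared key vertex of $D$ and an adjacent $2$-class $D'$ (each of size $\ge 5$) yields at least one unblocked vertex in each; \Cref{lem:deletion2perclass_upd} ensures the two can be chosen non-adjacent. A symmetric planarity bound using \Cref{rem:CC'}(b) shows that $\{i_1,i_2\}\subset C\cap I$ has only a bounded number of neighbors among the parking vertices, so the target is reachable.

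\textbf{Executing the swap and finishing.} Execute $\mathcal{R}$ to reach $J_0'$, then move the freed token and the tokens of the $I_s$-activation pair (plus the second key vertex of $C'$ in the $(2,2)$-clean case) onto the parking vertices in $D$ (and $D'$). This empties the key vertices of $C$ (and $C'$), so we can place tokens on $\{i_1,i_2\}\subset C\cap I$; weak $I_s$-greediness then finishes the transformation into a subset of $I$. For the ``moreover'' statement, the vertices of $G$ used outside of $I$ during the argument are of three types: (i) vertices in non-big classes, which are untouched when passing to $G'$; (ii) key vertices of the big classes $D,D',C,C'$, which are important in $G$ and hence still exist and are still key vertices of classes of size $\ge 5$ (or $\ge 7$) in $G'$ by the importance-preserving hypothesis; and (iii) at most four parking vertices inside $D$ (and $D'$), which are available in $G'$ because the class backing each important vertex retains the required size.

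\textbf{Main obstacle.} The delicate step is case (b), where the budget $|D|\ge 5$ is tight and the swap must be routed through two different classes that share a key vertex: the intermediate configurations must remain independent while both $D$ and $D'$ simultaneously contribute parking vertices, and controlling the blocked vertices in each requires careful use of \Cref{lem:minor2vertices} and \Cref{rem:CC'}. The $(2,2)$-clean subcase adds a further layer, since we effectively park three tokens during the swap and must ensure that the auxiliary $I_s$-activation vertex (the second key vertex of $C'$) has a safe intermediate landing spot; this is where the shared-key-vertex structure guaranteed by the cleanness definition, together with planarity, provides just enough slack.
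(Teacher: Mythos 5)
Your skeleton matches the paper's (which proves this as \Cref{lem:locked_classes2} and \Cref{lem:locked_classes3}): reduce everything to swapping the $I_s$-activation pair for the $I$-activation pair, take a minimum-length unlocking sequence $\mathcal{R}$ for $D$, use \Cref{lem:minor2vertices} to find vertices of $D$ anticomplete to the tokens $\mathcal{R}$ displaced, park tokens there, then finish by weak greediness. But the execution has a genuine gap in the parking step. First, you never vacate the second key vertex of $D$: every vertex of $D$ is adjacent to \emph{both} key vertices $x,y$ of $D$, so after $\mathcal{R}$ frees $y$, no token can be placed anywhere in $D$ until the token on $x$ is also moved --- and the only safe place to move it is into $D$ itself (the move $x\to v_1$ is legal precisely because it is the token on $x$ that jumps). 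Second, your capacity count does not close: in case (a) you reserve only a non-adjacent pair $a,b$, but at least three tokens need temporary homes in $D$ (the one from $x$, the one freed at $z$, and the one from $j_1$; the tokens from $x$ and $y$ must ultimately \emph{return} to $x$ and $y$, since $I_s\setminus\{j_1,j_2\}$ must survive into the final set). This is why the paper extracts an independent triple $\{v_1,v_2,v_3\}$ from the $\ge 5$ unblocked vertices of $D$, not a pair; with your bound of $4$ blocked vertices and $|D|\ge 7$ you cannot even guarantee a triple.

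Two further steps are missing rather than merely compressed. (i) The tokens of $I_s$ that $\mathcal{R}$ displaced must be restored (the target configuration contains $I_s\setminus\{j_1,j_2\}$); the paper does this by running $\mathcal{R}^{-1}$, which is only valid because the parking vertices were chosen anticomplete to everything $\mathcal{R}$ touches. (ii) The parked tokens in $D$ may be adjacent to $C$, so before placing anything on $\{i_1,i_2,i_3\}$ one must shift them within $D$ to a second independent triple anticomplete to $C$ (the paper's Phase~2, via \Cref{lem:deletion2perclass}); without this your final placement on $i_1,i_2$ can be blocked. Finally, case (b) and the $(2,2)$-clean variant --- which you correctly flag as the hard part --- are where most of the paper's work lives (\Cref{lem:locked_classes3} splits into cases according to whether $x$, $y$, $y'$ coincide with $j_1,j_2,j_3$, and uses \Cref{lem:deletion2perclass_upd} to coordinate parking across $A$ and $B$ simultaneously); naming the obstacle is not the same as overcoming it, so as written the proposal does not establish the lemma in that case.
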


The proof of \Cref{lem:reduc-equiv} is a bit technical, which is why we defer it; but it is conceptually straightforward.  
We denote by $G'$ the graph formed from $G$ by deleting all the vertices allowed by the ``moreover'' statement.  We just 
explicitly construct in $G'$ a tranformation from $I_s$ to $I$.  In (a) we show that $I$ is $3$-clean for $I_s$ and in (b) 
we show that $I$ is $(2,2)$-clean for $I_s$ (in both cases, w.r.t.~$G'$).

\begin{mainplanarthm}
For planar graphs, $\ISRTJ$ admits a kernel of size at most $42k$.  Furthermore, we can construct this kernel in polynomial time.
\end{mainplanarthm}

\begin{proof}
By \Cref{clm:C1} for planar graphs, if $|\mathcal{C}_1|\ge 4k$, then $(G,I_s,I_t)$ is a \textsf{yes}-instance.
This is easy to check in polynomial time; so we henceforth assume that $|\mathcal{C}_1|\le 4k$.
By \Cref{clm:C3}, we have $|\mathcal{C}_3| \le 8k$.  Since $|I_s\cup I_t|\le 2k$, it suffices to construct an equivalent
instance $(G',I_s,I_t)$, where $G'$ is a subgraph of $G$, with $|\mathcal{C}_2| \le 28k$. If we can do this, then $G'$ 
has order at most $(4+28+8+2)k=42k$. 

So we assume that the number of vertices in $2$-classes (in $G$) is at least $28k$. 
By \Cref{lem:condition}, we can find an independent set \Emph{$I$}
in polynomial time  that is clean for both $I_s$ and $I_t$. 
    If $I$ is $I_s$-greedy (resp.~$I_t$-greedy), then by definition we can transform $I_s$ (resp.~$I_t$) into an independent subset 
    of $I$.  So we assume, by symmetry between $I_s$ and $I_t$, that $I$ is not $I_s$-greedy.  That is, $I$ is weakly $I_s$-greedy. 

    We bound the number of vertices in $G$ by grouping them according to the numbers of neighbors they have in $I_s$.  Recall that $|\mathcal{C}_3|\le 8k$; this handles all vertices with 3 or more neighbors in $I_s$.  And by
    \Cref{lem:atmost8} we assume that $|V(G)\setminus N(I_s)|\le 2$.  So we focus on those vertices with exactly 1 or 2 neighbors in $I_s$.  Those with 1 neighbor in $I_s$ and 0 neighbors in $I_t$ are handled by the bound $|\mathcal{C}_1|\le 4k$; in fact, the same proof gives $|\mathcal{C}_1|\le 4k-3$, so we assume $|\mathcal{C}_1|+|V(G)\setminus N(I_s)|\le 4k$.  Thus, we are mainly concerned with vertices in $2$-classes, particularly those with at least 1 key vertex in $I_s$.
    As we noted in \Cref{cor:condition}, if at least $q$ $2$-classes each have at least one key vertex outside $I_s$ (for example in $I_t$), 
    then the number of vertices in these classes is at most $2q+2k$; otherwise, $I$ is $I_s$-greedy. 
    
    So we now restrict our attention to $2$-classes for which both key vertices are in $I_s$.  (In the final paragraph of the proof, we briefly return to vertices in $2$-classes with only 1 key vertex in $I_s$.)
    By the proof of \Cref{planar-complexity}, the number of $2$-classes with both key vertices in $I_s$ is at most $3k$. 
    We now delete some vertices from some of these $2$-classes.
    We will not modify any $2$-class of size at most $4$. We call a vertex $x$ of $I_s$     
    \Emph{important} if $x$ is a key vertex either for at least two $2$-classes of size at least $5$ or for (at least) one 
    $2$-class of size at least $7$; otherwise, $x$ is \emph{unimportant}. 
    We run the following algorithm: color blue the $2$-classes of size at least $7$ and color red the remaining $2$-classes 
    of size at least $5$. Now we iteratively remove colors from classes as long as every important vertex remains a key vertex 
    for to either (at least) one blue class or two red classes; we continue removing colors until no color removal is possible. 
    
    Denote by \Emph{$n_1, n_2, n_3$} the numbers of unimportant vertices,
    important vertices adjacent to a blue class, and important vertices adjacent only to red classes.  Note that $n_1+n_2+n_3=k$,
    since each vertex of $I_s$ is counted by precisely one $n_i$.
    When the algorithm stops, no color can be removed. In particular, every red class has a key vertex that is itself adjacent 
    to exactly $2$ red classes and to $0$ blue classes (otherwise the color can be removed). 

We now apply the following three reduction rules; we call the resulting graph \Emph{$G'$}.
\medskip

\noindent\textbf{Reduction Rule 1.} For every uncolored $2$-class $D$ for which both key vertices are important, delete all the vertices of $D$ except the vertices of $I$.
\medskip

\noindent\textbf{Reduction Rule 2.} For every $2$-class $C$ colored blue, delete all the vertices of $C$ that are not in $I$,
except for at most $7$.

\medskip

\noindent\textbf{Reduction Rule 3.}  For every $2$-class $C$ colored red, delete all the vertices of $C$ that are not in $I$, 
except for at most $5$ (this means removing at most 1 vertex).

\begin{clm}
    Reduction Rules 1, 2, and 3 are safe.  That is, $(G',I_s,I_t)$ and $(G,I_s,I_t)$ are equivalent.
\end{clm}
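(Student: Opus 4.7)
The plan is to prove both directions of the equivalence. The forward direction is immediate since $G' \subseteq G$: any reconfiguration sequence valid in $G'$ remains valid in $G$. For the converse, I will assume $(G, I_s, I_t)$ is a yes-instance. For an independent set $J$, say that $J$ satisfies \emph{property $P$} if, from $J$ in $G$, one can reach a state that unlocks either (a) a $2$-class of size at least $7$, or (b) a $2$-class of size at least $5$ sharing a key vertex with another $2$-class of size at least $5$. I split into cases based on whether $P$ holds at $I_s$ and $I_t$.

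First suppose both $I_s$ and $I_t$ satisfy $P$. My plan is to apply \Cref{lem:reduc-equiv} to both and bridge the outputs through $I$. Before applying the lemma, I verify its hypothesis on $G'$: by the stopping condition of the color removal algorithm, every vertex important in $G$ is a key vertex for at least one blue class (of size $\geq 7$ in $G$) or at least two red classes (of size $\geq 5$ in $G$). Rules $2$ and $3$ keep every blue class of size $\geq 7$ and every red class of size $\geq 5$ in $G'$, and Rule $1$ only affects uncolored classes. So every vertex important in $G$ is also important in $G'$. Applying \Cref{lem:reduc-equiv} twice, I obtain reconfigurations in $G'$ from $I_s$ and $I_t$ to subsets $I_s', I_t' \subseteq I$ of size $k$. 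Since $I$ is an independent set, I reconfigure $I_s' \to I_t'$ in $G'$ by jumping tokens one-by-one from $I_s' \setminus I_t'$ onto $I_t' \setminus I_s'$ without adjacency conflicts. Concatenating the three sequences yields an $I_s \to I_t$ reconfiguration in $G'$.

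Otherwise, at least one of $P(I_s), P(I_t)$ fails. By symmetry (reverse the reconfiguration if necessary), I may assume $P$ fails at $I_s$. Here the plan is to show that no state reachable from $I_s$ in $G$ contains a token on a deleted vertex; given this, the existing $I_s \to I_t$ reconfiguration in $G$ stays within $V(G')$ and remains valid in $G'$. Suppose for contradiction some reachable $I'$ has a token on $v \in D$ for a modified class $D$. Then both key vertices of $D$ are token-free in $I'$ (since $v$ is adjacent to both). If $D$ is blue (Rule $2$), then $|N(D) \cap I'| \leq 1$, so $D$ is unlocked in $I'$, directly triggering (a). If $D$ is red (Rule $3$), then since the algorithm could not remove $D$'s color without breaking the invariant, some important key vertex $y$ of $D$ has $0$ blue classes and exactly $2$ red classes at $y$ in the final coloring (one of which is $D$); hence $D$ shares $y$ with another red class of size $\geq 5$, and unlocking $D$ triggers (b).

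The main obstacle is the Rule $1$ case, since when $|D| \leq 4$ we cannot trigger (b) via $D$ itself. I will route the contradiction through $D$'s key vertices: for any important key vertex $x$ of $D$, the invariant at the end of the algorithm guarantees that $x$ is key for at least one blue $B$ or at least two red $C_1, C_2$ in $G'$. Since $v \in D$ has a token and $v$ is adjacent to $x$, the vertex $x$ has no token in $I'$; hence either $|N(B) \cap I'| \leq 1$ (unlocking $B$, triggering (a)) or $|N(C_1) \cap I'| \leq 1$ with $C_1, C_2$ both of size $\geq 5$ sharing the key $x$ (unlocking $C_1$, triggering (b)). In each case we contradict the failure of $P$ at $I_s$, completing the argument.
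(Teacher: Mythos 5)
Your proof is correct and follows essentially the same route as the paper's: both arguments reduce to \Cref{lem:reduc-equiv} together with the stopping invariant of the coloring algorithm, via the observation that placing a token on a vertex of a modified class frees both of its key vertices and hence unlocks a blue class or a pair of red classes satisfying that lemma's hypothesis. Your reorganization into a dichotomy on the property $P$ (rather than the paper's rule-by-rule treatment) is a presentational difference only.
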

\begin{proof}
If a transformation exists from $I_s$ to $I_t$ in the reduced graph $G'$, then one also exists in the original graph $G$. 
We now prove the converse. 
    \smallskip
    
\noindent
    \textit{Rule 1.} 
Consider a transformation $\mathcal{R}$ from $I_s$ to $I_t$ in $G$. We prove that 
we can transform $I_s$ into $I$ in $G'$. By symmetry\footnote{This observation merits a bit more explanation, since we defined 
important vertices (and hence Reduction Rules 1, 2, and 3) w.r.t.~$I_s$, but not w.r.t.~$I_t$.  However, we simply note that when
we run $\mathcal{R}$ in reverse, from $I_t$ to $I_s$, the analysis in the following two paragraphs still holds.}, we can thus also tranform $I_t$ to $I$; so in $G'$ we can transform $I_s$ into 
$I_t$, via $I$. 
    
    We assume that at some point in $\mathcal{R}$, there is a token on a vertex of $D$ (otherwise we are done, trivially).
    So at some step of the transformation, $\mathcal{R}$ unlocks $D$.
    When $D$ becomes unlocked, the token has moved off of one of its key vertices, which we denote by \Emph{$y$}. 
    Since $D$ is uncolored, and $y$ is an important vertex, $y$ is adjacent to either two red classes or to one blue class.
    So when $D$ is unlocked, we have additionally unlocked either a class of size at least $7$ (in $G'$) or two classes of 
    size at least $5$ (in $G'$) with $y$ as a key vertex.   

    So by \Cref{lem:reduc-equiv} 
    we can transform $I_s$ (and $I_t$) into an independent subset of $I$ (of size $k$) using only vertices of $I$, vertices of 
    colored classes, and vertices in classes adjacent to unimportant vertices. 
    Such a transformation does not use vertices of $D\setminus I$, so deleting them does not affect the existence of 
    such a transformation; this completes the proof.
    \smallskip

\noindent
\textit{Rules 2 and 3.} If some transformation from $I_s$ into $I_t$ uses a vertex of $C$, then, by 
\Cref{lem:reduc-equiv}, 
there exists such a transformation where the only vertices of $C$ used are in $I$ or in a 
    subset that we kept of size at most $5$ (resp.~$7$).  
\end{proof}

We now complete the proof, with a final counting argument. 
By Reduction Rule 1, each uncolored class with at least 5 vertices outside $I$ has at least one key vertex that is unimportant. 
Since each unimportant vertex is the key vertex of at most one such class, the total number of uncolored classes with at least 5 
vertices outside $I$ is at most the number of unimportant vertices, $n_1$.  Thus, the total number of vertices outside $I$ in 
these (big, uncolored) classes is at most $6n_1$.  
Each blue class has a key vertex that is important, but that is not a key vertex for any other blue class.  
Thus, the total number of vertices, outside of $I$, in these blue classes is at most $7n_2$.  
And each red class has a key vertex that is important and is a key vertex for exactly one other
red class (and no blue class).  Thus, the total number of vertices, outside of $I$, in these red classes is at most $2\cdot 5n_3$.

Since the total number of $2$-classes with both key vertices in $I_s$ is at most $3k$, the total number of vertices in such $2$-classes, excluding the vertices of $I$, is at most $6n_1+7n_2+10n_3+
4(3k-n_1-n_2-n_3-r)\le 6(n_1+n_2+n_3)+12k-4r=18k-4r$, where $3k-r$ is the number of $2$-classes with both key vertices in $I_s$.
There are at most $6k$ $2$-classes in total, so the number of $2$-classes with one key vertex in $I_s$ is at most $3k+r$; 
by \Cref{cor:condition}
the number of vertices in these classes is at most $2(3k+r)+2k=6k+2r+2k$. So the total number of vertices in $2$-classes (outside $I$) 
is at most $18k-4r+(6k+2r+2k)\le 26k$. 
So, after the application of Reduction Rules 1, 2, and 3, the number of vertices in $2$-classes, excluding vertices of $I$, is at most $26k$. Since $I$ has are at most $2k$ vertices, the result holds.
\end{proof}

\subsection{Weakly Clean Independent Sets}\label{sec:weaklyclean}

In this section, we prove \Cref{lem:reduc-equiv}.  For clarity of presentation, we split the proof into $2$ parts, first 
removing vertices in each $2$-class of size at least $7$, and second removing vertices in each $2$-class of size at least $5$ 
that shares a key vertex with another $2$-class of size at least $5$.  It is straightforward to check that together 
\Cref{lem:locked_classes2} and \Cref{lem:locked_classes3} imply \Cref{lem:reduc-equiv}.
(Starting from $G$, we first apply \Cref{lem:locked_classes2} to get a subgraph $G''$; after this we apply 
\Cref{lem:locked_classes3} to $G''$ to get a graph $G$.)
We use the following definitions.
A class $C$ is \emph{$I_s$-unlocked} (resp.~\emph{$I_t$-unlocked}) if $|N(C) \cap I_s| \le 1$ (resp.~$|N(C) \cap I_t|\le 1$). A class that is not $I_s$-unlocked is \emph{$I_s$-locked}.

Recall that a vertex is \emph{important} if it is a key vertex for either (a) at least one $2$-class of size at least $7$ or 
(b) at least two $2$-classes of size at least $5$.

\begin{lem}\label{lem:locked_classes2}
    Let $G$ be a planar graph and $I$ be an independent set that is weakly clean for $I_s$.
    If we can unlock from $I_s$ a $2$-class of size at least $7$, then we can transform $I_s$ into any (size $k$) independent subset 
of $I$.
    Moreover, such a transformation still exists in every subgraph $G'$ formed from $G$ by deleting vertices (outside of $I$)
in $2$-classes of size at least $7$ such that every vertex important in $G$ remains important in $G'$.
\end{lem}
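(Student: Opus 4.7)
The plan is to adapt the minimum-length unlocking analysis of \Cref{lem:unlock} to this planar, size-$7$ setting, and to combine it with the weakly-greedy order for $I$, so as to explicitly realize the activation step $\{i_1,i_2\}\mapsto\{j_1,j_2\}$ using at most two vertices of $D$ outside $I$ as a parking lot. Throughout, $\{i_1,i_2\}$ denotes the key vertices of $C$ and $\{j_1,j_2\}\subseteq C$ the $I$-activation pair.

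First, invoke \Cref{lem:hardcase}: if at least one key vertex of $C$ lies outside $I_s$, then the entire transformation from $I_s$ into $I$ takes place inside $I_s\cup I$, and the conclusion (including its ``moreover'') is immediate. So assume $C$ is $I_s$-locked, meaning $\{i_1,i_2\}\subseteq I_s$. Since $I$ is weakly $I_s$-greedy, it suffices to reach the intermediate state $\{j_1,j_2\}\cup(I_s\setminus\{i_1,i_2\})$; the rest of the transformation into a size-$k$ subset of $I$ then follows by the greedy order afforded by weakly-greediness. Next, take a minimum-length reconfiguration sequence $\mathcal{R}$ from $I_s$ that unlocks $D$, reaching a state $J_0'$. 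The same minimality argument as in \Cref{lem:unlock} yields: (i) the last step of $\mathcal{R}$ is a jump $x_D\to z$ for some key vertex $x_D$ of $D$, (ii) every earlier step slides a token along an edge, and (iii) every vertex whose token-status changed during $\mathcal{R}$, apart from $z$, lies in a single connected component $K$ of
\[
G'' := G - \bigcup\bigl\{\mathcal{C}_Y\cup Y : \mathcal{C}_Y\text{ is a }2\text{-class with }|\mathcal{C}_Y|\ge 7\bigr\}.
\]
If the other key vertex $y_D$ of $D$ still carries a token in $J_0'$, an extra slide $y_D\to c$, for a suitably chosen $c\in D$, produces a state $J_1$ in which both key vertices of $D$ are free.

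The technical core is then to choose $c\in D$ and $a\in D$ so that the four moves $i_1\to a$, $i_2\to j_2$, $a\to j_1$, $c\to y_D$ (the last performed after running $\mathcal{R}$ in reverse) are all legal. Legality requires $a$ and $c$ to avoid the $D$-neighbours of $K$, of $z$, of $\{j_1,j_2\}$, and of each other. Using \Cref{rem:pathorcycle}, \Cref{lem:minor2vertices} with $r=3$, and \Cref{rem:CC'}(a), each of these sources contributes at most two blockers in $D$; the remaining tokens of $J_1$ lie in $X\setminus\{x_D,y_D\}$ and add no blockers by the definition of a $2$-class. The threshold $|D|\ge 7$ is exactly what makes this counting fit. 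Once valid $a$ and $c$ exist, executing the four moves above and then reversing $\mathcal{R}$ brings us to the intermediate target $\{j_1,j_2\}\cup(I_s\setminus\{i_1,i_2\})$, from which the greedy completion finishes the transformation. For the ``moreover'' clause, observe that outside of $I$ the construction uses only the two vertices $a,c\in D$, while every other move stays inside the component $K\subseteq G''$, which is disjoint from every $2$-class of size at least $7$. Thus, any importance-preserving deletion inside $2$-classes of size $\ge 7$ still leaves enough of $D$ (or of an alternative large class that keeps an important key vertex of $D$ important) to locate $\{a,c\}$, so the same transformation carries over to $G'$.

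\textbf{Principal obstacle.} The tightest spot is the blocker count when $|D|=7$ and $G[D]$ is a $7$-cycle, where $\alpha(G[D])=3$ is sharp. Naively summing the blockers coming from $K$, from the component of $z$, from $\{j_1,j_2\}$, and from the two in-cycle neighbours of $c$ exceeds the available slack; overcoming this requires either absorbing $z$ into $K$ (which typically follows from the connectedness argument used in \Cref{lem:unlock}) or exploiting the planar embedding to argue that $\{j_1,j_2\}\subseteq C$ and $K$ cannot simultaneously attack the same independent pair inside $D$. This refined case analysis is where the bulk of the verification work will go.
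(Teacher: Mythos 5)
Your skeleton matches the paper's (minimum-length unlocking sequence $\mathcal{R}$ for a class $D$ with $|D|\ge 7$, the properties (i)--(iii), parking tokens in $D$, reversing $\mathcal{R}$, then swapping in the activation pair of $I$), but two of the details you identify as delicate are in fact where your version breaks, and the fix is structural rather than a finer case analysis. First, your final configuration is wrong: the token that left $x_D$ on the last step of $\mathcal{R}$ sits on $z$ and is never returned (the reversal of $\mathcal{R}$ must omit that last step, since $x_D$ is adjacent to your parked token on $c$), so you end at $(I_s\setminus\{x_D,i_1,i_2\})\cup\{z,j_1,j_2\}$ rather than $(I_s\setminus\{i_1,i_2\})\cup\{j_1,j_2\}$. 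The weakly-greedy order is defined relative to $I_s$, and $z$ may be adjacent to later targets $j_t$, so the greedy completion does not apply from that state. Second, as you concede, reserving only two vertices $a,c$ of $D$ that simultaneously avoid $N(Z)\cap D$, the neighbours of $z$, $N(C)\cap D$, and each other does not fit inside a $7$-cycle. The paper resolves both problems at once by parking \emph{three} tokens in $D$ (from both key vertices of $D$ and from one key vertex of $C$) and scheduling the avoidance constraints at different times: while the tokens of $\mathcal{R}$ are displaced one only needs an independent triple of $D$ avoiding $N(Z)\cap D$ (at most $2$ vertices by \Cref{lem:minor2vertices}, and the remaining $\ge 5$ vertices induce a union of paths, hence contain such a triple); then, \emph{after} reversing $\mathcal{R}$, one slides within $D$ to a triple avoiding $N(C)\cap D$ (again at most $2$ vertices by \Cref{rem:CC'}). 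The third parked token, together with the auxiliary activation vertex of $I$ (a third vertex of $C\cap I$, which exists precisely because $I$ is $3$-clean), is what allows tokens to be cycled back onto \emph{both} key vertices of $D$ at the end, landing exactly on $(I_s\setminus\{i_1,i_2\})\cup\{j_1,j_2\}$.

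Separately, the hypothesis is that $I$ is \emph{weakly clean}, which includes the $(2,2)$-clean case; your proof only sets up the $3$-clean situation (activation pair of $I_s$ equal to the key-vertex pair of a single class $C$ containing three vertices of $I$). In the $(2,2)$-clean case the auxiliary vertices live in a second class $C'$ sharing only one key vertex with $C$, the shuffle needs both auxiliary vertices $i_3,i_4$ of $C'$ and the auxiliary activation vertex $j_3$ of $I_s$, and the interaction between $D$, $C$, and $C'$ requires \Cref{lem:deletion2perclass}. This is a substantial portion of the paper's argument and not a routine adaptation, so it must be addressed explicitly.
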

\begin{proof}
    We first assume that $I$ is $3$-clean. We will explain at the end of the proof how to adapt our proof for $(2,2)$-clean 
independent sets.  The transformation we construct below is illustrated in Figure~\ref{fig:lemma_classes2}.
    Let $\{i_1,i_2\}$\aside{$i_1,i_2,i_3$} and $\{j_1,j_2\}$\aaside{$j_1,j_2$}{0mm} be the activation pairs, respectively, 
    of $I$ and $I_s$ and $i_3$ be the auxiliary activation vertex of $I$. Recall, by Lemma~\ref{lem:hardcase}, that we can assume 
    the key vertices of the class $C$ containing $i_1,i_2,i_3$ are in $I_s$ (and thus are $j_1,j_2$).
    Our goal in this proof is to show that we can replace $\{j_1,j_2\}$ by $\{i_1,i_2\}$. If we can do this, then the conclusion 
    follows immediately, since $I$ is weakly $I_s$-greedy.

    Among all $2$-classes of size at least $7$ that can be unlocked from $I_s$, we choose a class \Emph{$D$} that can be unlocked 
    in the minimum number of steps. We denote by $x,y$ the two vertices of $N(D)\cap I_s$.
    (In the rest of the proof, we assume that $\{x,y\}$ and $\{j_1,j_2\}$ are disjoint; if not, then we simply omit useless 
    moves in the proposed transformation).  We consider a shortest transformation $\mathcal{R}$ from $I_s$ into an independent set 
    \Emph{$I'$} where $D$ is unlocked (the transformation might be empty if $D$ is initially $I_s$-unlocked).  As we already 
    observed in the proof of \Cref{lem:unlock}: (i) the last step of $\mathcal{R}$ must consist in moving a token on $\{x,y\}$, 
    by symmetry we call it $y$, to another vertex, call it \Emph{$y'$}, and, (ii) each jump of the transformation except the last one 
    consists of moving a token (from its current vertex) to an adjacent vertex. 

\begin{figure}[!t]
    \centering
\begin{tikzpicture}[yscale=1.3, every edge quotes/.style = {outer sep=1pt, above, font=\footnotesize, draw=none, fill=white, fill opacity = .9}]
    \foreach \i in {1,2,3,4,5,6,9,10,11}
    {
        \draw  (\i,0) node (y\i) {};  
    }
	\draw (3,1) node (x3) {} (5,1) node (x5) {} (9.5,1) node (x9) {} (10.5,1) node (x10) {};
	\draw (-.5,.7) node (x0) {} 
	(y4) ++ (0,4*\offset) node[lStyle] (y4p) {}
	(y9) ++ (-2*\offset,-2*\offset) node[lStyle] (y9p) {};
	\draw[thick] (y1) -- (y2) -- (y3) -- (y4) -- (y5) -- (y6);

    \foreach \from/\to/\lab in {y2/y1/4, y4/y3/5, y6/y5/6}
	\draw[thick, mygreen] (\from) edge[bend right=30, ->, shorten <= 0.8mm, shorten >= .8mm, "\lab"] (\to); 

     \draw[thick, mypurple] (x9) edge[bend right=10, ->, shorten <= 0.8mm, shorten >= .8mm, "3"] (y6); 
     \draw[thick, mypurple] (x5) edge[bend right=20, ->, shorten <= 0.8mm, shorten >= .8mm, "0"] (x0); 
     \draw[thick, mypurple] (x0) edge[bend right=0, ->, shorten <= 0.8mm, shorten >= .8mm, "2"] (y4p); 
	\begin{scope}[every edge quotes/.style = {outer sep=1.5pt, right, draw=none, fill=white, fill opacity=.9, font=\footnotesize}] 
		\draw[thick, mypurple] (x3) edge[bend right=10, ->, shorten <= 0.8mm, shorten >= .8mm, "1"] (y2); 
	\end{scope}
	\begin{scope}[every edge quotes/.style = {outer sep=6.0pt, right, draw=none, fill=white, fill opacity=.9, font=\footnotesize}] 
       		\draw[thick, myblue] (y5) edge[bend right=10, ->, shorten <= 0.8mm, shorten >= .8mm, "10"] (x3); 
	\end{scope}
	\begin{scope}[every edge quotes/.style = {outer sep=2.75pt, right, pos = .5, draw=none, fill=none, fill opacity=.9, font=\footnotesize}] 
       		\draw[thick, myblue] (x10) edge[ ->, shorten <= 0.8mm, shorten >= .8mm, "7"] (y9p); 
	\end{scope}
	\begin{scope}[every edge quotes/.style = {outer sep=1.0pt, below, draw=none, fill=white, fill opacity=.9, font=\footnotesize}] 
       		\draw[thick, myblue] (y10) edge[bend right=5, ->, shorten <= 0.8mm, shorten >= .8mm, "11"] (x5); 
       		\draw[thick, myblue] (y3) edge[bend right=15, ->, shorten <= 0.8mm, shorten >= .8mm, "9"] (y10); 
       		\draw[thick, myblue] (y1) edge[bend right=20, ->, shorten <= 0.8mm, shorten >= .8mm, "8"] (y11); 
	\end{scope}

	\foreach \where/\lab in {x0/{y'}, x3/x, x5/y, x9/{j_1}, x10/{j_2}}
	\draw (\where) ++ (0,13*\offset) node[lStyle] {\footnotesize{$\lab$}};

	\foreach \where/\lab in {y1/{v_1'}, y2/{v_1}, y3/{v_2'}, y4/{v_2}, y5/{v_3'}, y6/{v_3}}
	\draw (\where) ++ (0,-14*\offset) node[lStyle, fill=white, opacity=.9, shape=rectangle] {\footnotesize{$\lab$}};

	\foreach \where/\lab in {y9/{i_1}, y10/{i_3}, y11/{i_2}}
	\draw (\where) ++ (0,-12.5*\offset) node[lStyle, fill=white, opacity=.9, shape=rectangle] {\footnotesize{$\lab$}};
    \begin{scope}[yshift=-1.05cm]
    \draw[semithick, rounded corners=3.25mm, gray] (1.5,.68) -- (6.3,.68) -- (6.3,1.265) -- (.7,1.265) -- (.7, .68) -- (1.5,.68);
	    \draw (0.45,1) node[lStyle] {\footnotesize{$D$}};
    
    \begin{scope}[xshift=3.15in]
    	\draw[semithick, rounded corners=3.25mm, gray] (1.5,.68) -- (3.3,.68) -- (3.3,1.265) -- (.7,1.265) -- (.7, .68) -- (1.5,.68);
	    \draw (3.55,1) node[lStyle] {\footnotesize{$C$}};
    \end{scope}
    \end{scope}
\end{tikzpicture}
\captionsetup{width=.5\textwidth}
    \caption{
	    \textcolor{mypurple}{\textbf{---} phase 1}
	    \textcolor{mygreen}{\textbf{---} phase 2}
	    \textcolor{myblue}{\textbf{---} phase 3}
    }
    \label{fig:lemma_classes2}
\end{figure}

    We denote by \Emph{$Z$} the set of vertices that were involved in a move during the sequence $\mathcal{R}$, excluding vertex 
    $y$.  (A vertex $v$ is \emph{involved in a move} if there exist two steps of the sequence such that one has a token 
    on $v$ and one has no token on $v$.)  By (i) and (ii) above, all the vertices of $Z$ belong to the same component of 
    $G \setminus (D \cup \{ x,y\})$.  So, by \Cref{lem:minor2vertices}, the set $N(Z) \cap D$ has size at most $2$. 
    Since $|D|\ge 7$, there exists $D'\subseteq D$ with $|D'|\ge 5$ and $N(Z)\cap D'=\emptyset$.  So \Cref{rem:pathorcycle} 
    ensures that $D$ contains an independent set $v_1,v_2,v_3$ that is anticomplete to $I' \setminus I_s$. 

    Recall that at this point $D$ is unlocked (since $x$ is not in $I'$) and $\{v_1,v_2,v_3\}$ is anticomplete to $I' \setminus x$. 
    We can now move $x \rightarrow v_1$, $y' \rightarrow v_2$, and $j_1 \rightarrow v_3$ (if $C=D$, then we simply omit 
    the third move of this sequence). We call this part of the transformation Phase 1\aside{Phase 1}; 
    see Figure~\ref{fig:lemma_classes2}.
    
    After this sequence of moves, we reach an independent set $J$ containing only one key vertex of $C$. We would like 
    to replace $j_2$ with a vertex of $\{i_1,i_2,i_3\}$. 
    However, we cannot do this directly, since vertices of the current independent set might be adjacent to $\{i_1,i_2,i_3\}$. 
    To allow these moves, we first cancel all the moves of $\mathcal{R}$ (except the last one which moved the token initially on $x$) by applying $\mathcal{R}^{-1}$, the moves of $\mathcal{R}$ in reverse order.
     Since all vertices involved in moves of $\mathcal{R}$, and thus of $\mathcal{R}^{-1}$, are anticomplete to $\{v_1,v_2,v_3\}$, applying $\mathcal{R}^{-1}$ does not create any conflict; that is, we keep an independent set throughout the transformation. 

    Note that, at this point, we have transformed $I_s$ into $(I_s \setminus \{x,y,j_1\}) \cup \{v_1,v_2,v_3\}$. If $C=D$, then to conclude we simply need to move tokens from $\{v_1,v_2\}$ to $\{i_1,i_2\}$. This is indeed possible, since $G[D]$ is a cycle or a disjoint union of paths, and we can always transform an independent set of size at most $3$ on a path/cycle into another as long as 
$|D|\ge 7$.
    Otherwise,  \Cref{rem:pathorcycle} ensures that $D$ contains an independent set $\{v_1',v_2',v_3'\}$ anticomplete to $C$ (and 
thus to $\{i_1,i_2,i_3\}$).  So we can transform $(I_s \setminus \{x,y,j_1\}) \cup \{v_1,v_2,v_3\}$ into $(I_s \setminus \{x,y,j_1\}) \cup \{v_1',v_2',v_3'\}$; this is Phase 2.\aside{Phase 2}

    Now we finally use the moves $j_2 \rightarrow i_1$, $v_1' \rightarrow i_2$, $v_2' \rightarrow i_3$, $v_3' \rightarrow x$, and 
$i_3 \rightarrow y$; this is Phase 3.\aside{Phase 3} We have thus reached the independent set $(I_s\setminus\{j_1,j_2\})\cup\{i_1,i_2\}$.
Since $I$ is weakly $I_s$-greedy, we can transform $I_s$ into $I$; this proves the lemma.

\begin{figure}[!b]
    \centering
\begin{tikzpicture}[yscale=1.3, every edge quotes/.style = {outer sep=1pt, above, font=\footnotesize, draw=none, fill=white, fill opacity = .9}]
    \foreach \i in {1,2,3,4,5,6,7,9,10,11,12}
    {
        \draw  (\i,0) node (y\i) {};  
    }
    \draw (3,1) node (x3) {} (5,1) node (x5) {} (9.5,1) node (x9) {} (10.5,1) node (x10) {} (11.5,1) node (x11) {};
	\draw (-.5,.7) node (x0) {} 
	(y4) ++ (0,4*\offset) node[lStyle] (y4p) {}
	(y9) ++ (-2*\offset,-2*\offset) node[lStyle] (y9p) {};
    \draw[thick] (y1) -- (y2) -- (y3) -- (y4) -- (y5) -- (y6) -- (y7);

    \foreach \from/\to/\lab in {y2/y1/4, y4/y3/5, y6/y5/6}
	\draw[thick, mygreen] (\from) edge[bend right=30, ->, shorten <= 0.8mm, shorten >= .8mm, "\lab"] (\to); 

     \draw[thick, mypurple] (x10) edge[bend right=0, ->, shorten <= 0.8mm, shorten >= .8mm, "3"] (y6); 
     \draw[thick, mypurple] (x5) edge[bend right=20, ->, shorten <= 0.8mm, shorten >= .8mm, "0"] (x0); 
     \draw[thick, mypurple] (x0) edge[bend right=0, ->, shorten <= 0.8mm, shorten >= .8mm, "2"] (y4p); 
	\begin{scope}[every edge quotes/.style = {outer sep=1.5pt, right, draw=none, fill=white, fill opacity=.9, font=\footnotesize}] 
		\draw[thick, mypurple] (x3) edge[bend right=10, ->, shorten <= 0.8mm, shorten >= .8mm, "1"] (y2); 
	\end{scope}
	\begin{scope}[every edge quotes/.style = {outer sep=1.0pt, below left, draw=none, fill=white, fill opacity=.9, font=\footnotesize}] 
       		\draw[thick, myblue] (y7) edge[bend right=10, ->, shorten <= 0.8mm, shorten >= .8mm, "12"] (x3); 
	\end{scope}
	\begin{scope}[every edge quotes/.style = {outer sep=0.0pt, right, pos = .5, draw=none, fill=none, fill opacity=.9, font=\footnotesize}] 
       		\draw[thick, myblue] (x9) edge[->, shorten <= 0.8mm, shorten >= .8mm, "7"] (y9p); 
	\end{scope}
	\begin{scope}[every edge quotes/.style = {outer sep=1.0pt, below, draw=none, fill=white, fill opacity=.9, font=\footnotesize}] 
       		\draw[thick, myblue] (y1) edge[pos = .3, bend right=20, ->, shorten <= 0.8mm, shorten >= .8mm, "8"] (y10); 
       		\draw[thick, myblue] (y5) edge[pos = .5, bend right=32, ->, shorten <= 0.8mm, shorten >= .8mm, "9"] (y7); 
       		\draw[thick, myblue] (y3) edge[pos = .7, bend right=20, ->, shorten <= 0.8mm, shorten >= .8mm, "11"] (y12); 
	\end{scope}
       		\draw[thick, myblue] (y12) edge[out=60, in = 30, ->, shorten <= 0.8mm, shorten >= .8mm, "13"] (x5); 
	\begin{scope}[every edge quotes/.style = {pos = .8, outer sep=0.3pt, left, draw=none, fill=none, font=\footnotesize}] 
       		\draw[thick, myblue] (y11) edge[bend left = 20, ->, shorten <= 0.8mm, shorten >= .8mm, "14"] (x11); 
	\end{scope}
	\begin{scope}[every edge quotes/.style = {pos = .95, outer sep=1.3pt, right, draw=none, fill=none, font=\footnotesize}] 
       		\draw[thick, myblue] (x11) edge[left, bend left = 20, ->, shorten <= 0.8mm, shorten >= .8mm, "10"] (y11); 
	\end{scope}

    \foreach \where/\lab in {x0/{y'}, x3/x, x5/y, x9/{j_2}, x10/{j_1}, x11/{j_3}}
	\draw (\where) ++ (0,13*\offset) node[lStyle] {\footnotesize{$\lab$}};

    \foreach \where/\lab in {y1/{v_1'}, y2/{v_1}, y3/{v_2'}, y4/{v_2}, y5/{v_3'}, y6/{v_3}, y7/{~v_3''}}
	\draw (\where) ++ (0,-14*\offset) node[lStyle, fill=none, opacity=.9, shape=rectangle] {\footnotesize{$\lab$}};

    \foreach \where/\lab in {y9/{i_1}, y10/{i_2}, y11/{i_3}, y12/{i_4}}
	\draw (\where) ++ (0,-12.5*\offset) node[lStyle, fill=white, opacity=.9, shape=rectangle] {\footnotesize{$\lab$}};
    \begin{scope}[yshift=-1.05cm]
    \draw[semithick, rounded corners=3.25mm, gray] (1.5,.68) -- (7.35,.68) -- (7.35,1.265) -- (.7,1.265) -- (.7, .68) -- (1.5,.68);
	    \draw (0.45,1) node[lStyle] {\footnotesize{$D$}};
    
    \begin{scope}[xshift=3.15in]
    	\draw[semithick, rounded corners=3.25mm, gray] (1.5,.68) -- (2.3,.68) -- (2.3,1.265) -- (.7,1.265) -- (.7, .68) -- (1.5,.68);
	    \draw (0.5,1) node[lStyle] {\footnotesize{$C$}};
    \end{scope}
    
    \begin{scope}[xshift=3.93in]
    	\draw[semithick, rounded corners=3.25mm, gray] (1.5,.68) -- (2.3,.68) -- (2.3,1.265) -- (.7,1.265) -- (.7, .68) -- (1.5,.68);
	    \draw (2.55,1) node[lStyle] {\footnotesize{$C'$}};
    \end{scope}

    \end{scope}
\end{tikzpicture}
\captionsetup{width=.5\textwidth}
    \caption{
	    \textcolor{mypurple}{\textbf{---} phase 1}
	    \textcolor{mygreen}{\textbf{---} phase 2}
	    \textcolor{myblue}{\textbf{---} phase 3}
    }
    \label{fig:lemma_classes2b}
\end{figure}

\paragraph{Modification for $\bm{(2,2)}$-clean independent sets.}
For $(2,2)$-clean independent sets, we will need to use additional activation vertices. We let $C'$ be the class containing the 
two additional auxiliary vertices $i_3,i_4$, and let $j_3$ be the auxiliary activation vertex of $I_s$. We assume that $j_1$ is a 
key vertex of both classes $C$ and $C'$; see \Cref{fig:lemma_classes2b}.  The first phase of the transformation is identical. 

If $C=D$, then we move the tokens from $\{v_1,v_2\}$ to $\{i_1,i_2\}$, and are done.  So we assume $C \ne D$. By~\Cref{lem:deletion2perclass} with $C_1=D$ and $C_r=C$, 
and since $|D| \ge 7$, there exists an independent subset $\{v_1',v_2',v_3'\}$ of $D$ that is anticomplete to $\{i_1,i_2\}$. We apply Phase 2 to move the tokens from $\{v_1,v_2,v_3\}$ to $\{v_1',v_2',v_3'\}$. We then move $j_2 \rightarrow i_1$, $v_1' \rightarrow i_2$. At most two vertices of $D$ are adjacent to $C$, and at most two are adjacent to $C'$; so there exist three vertices in $D$ not 
adjacent to $C \cup C'$. Since $D$ is a subgraph of a cycle, this set of size $3$ has an independent subset of size two, 
$\{v_2'',v_3''\}$.  We move the tokens from $\{v_2',v_3'\}$ to $\{v_2'',v_3''\}$. Finally, we perform the moves $j_3 \rightarrow 
i_3$, $v_2'' \rightarrow i_4$, $v_3'' \rightarrow x$, $i_4 \rightarrow y$, and $i_3 \rightarrow j_3$; this completes the proof. 
\end{proof}

The following lemma has a proof with a similar flavor.

\begin{lem}
\label{lem:locked_classes3}
    Let $G$ be a planar graph and $I$ be an independent set that is weakly clean for $I_s$.
If we can unlock from $I_s$ a $2$-class of size at least $5$ that has a key vertex $x$ that is also a key vertex of another $2$-class 
also of size at least $5$, then we can transform $I_s$ into any independent set of $I$.
    Moreover, such a transformation still exists in every subgraph $G'$ formed from $G$ by deleting vertices (outside of $I$)
in $2$-classes of size at least $5$ (each with a key vertex that is also a key vertex for another $2$-class of size at least $5$) 
such that every vertex important in $G$ remains important in $G'$.
\end{lem}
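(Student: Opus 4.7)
The plan is to adapt the proof of \Cref{lem:locked_classes2}, replacing the single class $D$ of size at least $7$ by the pair of $2$-classes $D_1, D_2$ (each of size at least $5$, sharing the key vertex $x$). Write the key vertices of $D_1, D_2$ as $\{x, u_1\}$ and $\{x, u_2\}$, respectively. I would handle the $3$-clean case explicitly; the $(2,2)$-clean case adapts along the same lines as in \Cref{lem:locked_classes2}. Choose a minimum-length transformation $\mathcal{R}$ from $I_s$ that unlocks at least one of $D_1, D_2$; by symmetry assume it unlocks $D_1$. As before, the last move of $\mathcal{R}$ shifts a token from some key $y$ of $D_1$ to a vertex $y'$, every earlier move is along an edge, and the set $Z$ of vertices touched by moves of $\mathcal{R}$ (excluding $y$) lies in a single component of $G \setminus (D_1 \cup \{x, u_1\})$. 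By \Cref{lem:minor2vertices}, $Z$ has at most $2$ neighbors in each of $D_1$ and $D_2$.

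The first new ingredient is to ensure that \emph{both} classes are unlocked. If $y = x$, this is automatic, since $x$ is the shared key. Otherwise $y = u_1$, and $x$ still holds a token, so $D_2$ remains locked; in that case I append one extra move $x \to v_0$, where $v_0 \in D_1$ is chosen anticomplete to $D_2$ and to every currently occupied vertex. Such a $v_0$ exists because $|D_1| \ge 5$, at most $2$ vertices of $D_1$ are adjacent to $D_2$ (\Cref{rem:CC'}(a)), and at most $2$ are adjacent to $Z \cup \{y'\}$ (\Cref{lem:minor2vertices}). After this optional move I reach an independent set $I^\star$ in which both $D_1$ and $D_2$ are unlocked.

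From here the three phases of \Cref{lem:locked_classes2} carry over almost verbatim with the substitution ``$D$'' $\to$ ``$D_1 \cup D_2$''. Each phase requires an independent subset of size $3$ (or size $2$ in one sub-step of the $(2,2)$ case) inside $D_1 \cup D_2$ that is anticomplete to some prescribed set $F$: for Phase 1, $F$ is essentially $Z \cup \{y'\}$; for Phase 2 in the $3$-clean case, $F$ is the set of vertices adjacent to $C \cup \{i_1, i_2, i_3\}$; for the $(2,2)$ sub-step, $F$ additionally includes neighbors of $C'$. The existence of such an independent set follows from \Cref{lem:deletion2perclass_upd} together with $|D_1|, |D_2| \ge 5$: after deleting at most $2$ vertices per class to make the two reduced classes pairwise anticomplete and each anticomplete to $F$, each reduced class still contains a subgraph of a path on at least $1$ vertex, so by \Cref{rem:pathorcycle} the union yields the desired independent set of size $3$. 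The final Phase 3 swap $j_2 \to i_1$, etc., is identical to \Cref{lem:locked_classes2}. The ``moreover'' conclusion then follows because every $D_i$-vertex used by the constructed transformation lies either in $I$ or in a small constant-size set of auxiliary positions, and the hypothesis that every vertex important in $G$ remains important in $G'$ guarantees that both $D_1$ and $D_2$ survive (with size at least $5$) in $G'$.

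The main obstacle I anticipate is bookkeeping: carefully enumerating, phase by phase, the several forbidden sets that each of the vertices $v_1, v_2, v_3$ (and later $v_1', v_2', v_3'$, $v_2'', v_3''$) must avoid, and verifying that $|D_1|, |D_2| \ge 5$ really does leave enough independent slack once all blocked vertices are subtracted. The $(2,2)$-clean sub-step requiring anticompleteness to $C \cup C'$ is the most delicate, since there both $C$ and $C'$ pile up obstructions on $D_1 \cup D_2$ simultaneously, and the corresponding edge cases (where $C$ or $C'$ coincides with $D_1$ or $D_2$) need to be checked separately, in analogy with the ``$C = D$'' shortcut in \Cref{lem:locked_classes2}.
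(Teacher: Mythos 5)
Your high-level strategy matches the paper's: take a minimum-length sequence $\mathcal{R}$ unlocking one of the two classes, use connectivity of the touched vertices $Z$ plus \Cref{lem:minor2vertices}/\Cref{lem:deletion2perclass_upd} to find safe parking spots, park tokens, shift them away from $C$, and then perform the activation swap. But there is a genuine gap in the execution: you never vacate the second key vertex $u_2$ of $D_2$. Every vertex of $D_2$ is by definition adjacent to \emph{both} of its key vertices $x$ and $u_2$, so no choice of ``anticomplete to $F$'' can make a vertex of $D_2$ available while $u_2$ still carries a token (and in the intended application $u_2 \in I_s$ is the typical case, since the red classes have both key vertices in $I_s$). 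Your ``first new ingredient'' only frees $x$, which unlocks $D_2$ in the formal sense but does not permit any token to enter it. Consequently the substitution ``$D \to D_1\cup D_2$'' in the three phases cannot be verbatim: the union has \emph{three} key vertices, so an extra token must be parked at the start and an extra token restored at the end. The paper's proof handles exactly this: it parks the tokens of $x$ and of $D_2$'s second key \emph{both inside $D_1$} (its moves $x\to a_1$, $y'\to a_2$), which is what finally permits $j_1$'s token and the wandering token to enter $D_2$; and at the end it routes one returning token through $i_3$ as a temporary spot so that all three keys can be reoccupied in a legal order ($b_1'\to y$, $b_2'\to x$, $i_3\to y'$). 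Your claim that ``the final Phase 3 swap is identical to \Cref{lem:locked_classes2}'' cannot be right for the same reason.

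A secondary problem is the counting. You write that after deleting at most $2$ vertices per class for mutual anticompleteness and $2$ more for anticompleteness to $F$, ``each reduced class still contains \dots at least $1$ vertex, so the union yields the desired independent set of size $3$'': two leftover vertices cannot yield an independent set of size $3$. The correct accounting (and the one the paper uses via \Cref{lem:deletion2perclass_upd}(a)) is that \emph{two} deletions per class suffice to achieve anticompleteness to both the other class and the connected set $Z$ simultaneously, leaving at least $3$ vertices per class inducing a union of paths, hence an independent set of size $2$ in each class -- four parking spots in total, two per class, which is precisely the number of tokens (from $x$, from $D_2$'s second key, from $j_1$, and the wanderer) that must be parked.
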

\begin{proof}
    The transformation we construct below is illustrated in Figures~\ref{fig:lemma_classes2c} and~\ref{fig:lemma_classes2d}.
    We first assume that $I$ is $3$-clean. At the end of the proof, we will explain how to adapt it for $(2,2)$-clean independent 
    sets.  Let $\{i_1,i_2\}$ and $\{j_1,j_2\}$ be the activation pairs, respectively, of $I$ and $I_s$, and let $i_3$ be the 
    auxiliary activation vertex of $I$.  We denote by $C$ the $2$-class containing $\{i_1,i_2,i_3\}$.
    The proof consists in showing that we can replace $\{j_1,j_2\}$ in $I_s$ by $\{i_1,i_2\}$. If we can, then the conclusion 
    follows directly, since $I$ is weakly $I_s$-greedy.

    Among all the key vertices $x$ adjacent to at least two $2$-classes of size at least $5$ such that at least one of them can 
    be unlocked from $I_s$, we choose a vertex $x$ and a class $A$ with key vertex $x$ that can be unlocked with the minimum number 
    of steps.  We consider a shortest transformation $\mathcal{R}$ from $I_s$ into an independent set $I'$ where $A$ is unlocked.
    We denote by $B$ another $2$-class of size at least $5$ for which $x$ is a key vertex. We denote by $y$ and $y'$ the second 
    key vertices, respectively, of $A$ and $B$. Even if $x$ and $y$ are not symmetric, in the following transformation, we do not 
    care which of $x$ or $y$ has been moved when $A$ is unlocked; so we assume that the token on $y$ has been moved.

\begin{figure}[!h]
    \centering
\begin{tikzpicture}[yscale=1.3, every edge quotes/.style = {outer sep=1pt, above, font=\footnotesize, draw=none, fill=white, fill opacity = .9}]
    \clip (-.5,-1.5) rectangle + (16.5,3.9);  
    \foreach \i in {1,2,3,4,6,7,8,9,11,12,13}
    {
        \draw  (\i,0) node (x\i) {};  
    }
    \draw (4,1) node (y) {} (5,1) node (x) {} (6,1) node (y') {} (2,1) node (unnamed) {} (11.5,1) node (j1) {} (12.5,1) node (j2) {};
	\draw 
	(y') ++ (0,2*\offset) node[lStyle] (y'p) {} 
    (y) ++ (0,-2*\offset) node[lStyle] (yp) {}
    (x1) ++ (0,-3*\offset) node[lStyle] (x1p) {}
    (x6) ++ (3*\offset,0) node[lStyle] (x6p) {}
    (x8) ++ (0,2*\offset) node[lStyle] (x8p) {};
    \draw[thick] (x1) -- (x2) -- (x3) -- (x4)  (x6) -- (x7) -- (x8) -- (x9);

	\begin{scope}[every edge quotes/.style = {outer sep=0.2pt, below, draw=none, fill=none, font=\footnotesize}] 
    \foreach \from/\to/\lab in {x1/x2/6, x3/x4/5, x6/x7/8, x8/x9/7}
	\draw[thick, mygreen] (\from) edge[pos = .5, bend right=20, below, ->, shorten <= 0.8mm, shorten >= .8mm, "\lab"] (\to); 
     \draw[thick, myblue] (x2) edge[bend right=20, pos = .5, ->, shorten <= 0.8mm, shorten >= .8mm, "11"] (x13); 
    \end{scope}
	\begin{scope}[every edge quotes/.style = {outer sep=3.0pt, left, draw=none, fill=none, font=\footnotesize}] 
     \draw[thick, myblue] (x7) edge[pos = .4, bend right=0, ->, shorten <= 0.8mm, shorten >= .8mm, "12"] (x); 
    \end{scope}
	\begin{scope}[every edge quotes/.style = {outer sep=12.0pt, right, draw=none, fill=none, font=\footnotesize}] 
     \draw[thick, myblue] (x9) edge[bend right=0, ->, shorten <= 0.8mm, shorten >= .8mm, "~13"] (yp); 
    \end{scope}

	\begin{scope}[every edge quotes/.style = {outer sep=0.5pt, above, draw=none, fill=none, font=\footnotesize}] 
     \draw[thick, mypurple] (x) edge[pos=.4, bend right=0, ->, shorten <= 0.8mm, shorten >= .8mm, "1"] (x3); 
     \draw[thick, mypurple] (y') edge[bend right=30, out = -45, in = -90, ->, shorten <= 0.8mm, shorten >= .8mm, "2"] (x1p); 
     \draw[thick, mypurple] (j1) edge[bend right=0, ->, shorten <= 0.8mm, shorten >= .8mm, "3"] (x8p); 
     \draw[thick, mypurple] (unnamed) edge[pos = 0.7, bend right=0, ->, shorten <= 0.8mm, shorten >= .8mm, "4"] (x6p); 
     \draw[thick, mypurple] (y) edge[bend right=0, ->, shorten <= 0.8mm, shorten >= .8mm, "0"] (unnamed); 
     \draw[thick, myblue] (j2) edge[pos = .5, bend right=0, ->, shorten <= 0.8mm, shorten >= .8mm, "9"] (x11); 
     \draw[thick, myblue] (x4) edge[ out = -60, in = -120, bend right=20,pos = .5, ->, shorten <= 0.8mm, shorten >= .8mm, "10"] (x12); 
     \draw[thick, myblue] (x13) edge[out = 60, in = 20,pos = .5, ->, shorten <= 0.8mm, shorten >= .8mm, "14"] (y'); 
    \end{scope}

    \foreach \where/\lab in {y'/y', x/x, y/y, j1/{j_1}, j2/{j_2}}
	\draw (\where) ++ (0,13*\offset) node[lStyle] {\footnotesize{$\lab$}};

    \foreach \where/\lab in {x1/{a_2}, x2/{a_2'}, x3/{a_1}, x4/{a_1'}, x6/{b_2}, x7/{b_2'}, x8/{b_1}, x9/{b_1'} }
	\draw (\where) ++ (0,-14*\offset) node[lStyle, fill=none, opacity=.9, shape=rectangle] {\footnotesize{$\lab$}};
    \foreach \where/\lab in {x11/{i_1}, x12/{i_2}, x13/{i_3}}
	\draw (\where) ++ (0,-12.5*\offset) node[lStyle, fill=white, opacity=.9, shape=rectangle] {\footnotesize{$\lab$}};
    \begin{scope}[yshift=-1.05cm]
    \draw[semithick, rounded corners=3.25mm, gray] (1.5,.68) -- (4.35,.68) -- (4.35,1.265) -- (.7,1.265) -- (.7, .68) -- (1.5,.68);
	    \draw (0.5,1) node[lStyle] {\footnotesize{$A$}};
    
    \begin{scope}[xshift=1.97in]
    	\draw[semithick, rounded corners=3.25mm, gray] (1.5,.68) -- (4.3,.68) -- (4.3,1.265) -- (.7,1.265) -- (.7, .68) -- (1.5,.68);
	    \draw (0.5,1) node[lStyle] {\footnotesize{$B$}};
    \end{scope}
    
    \begin{scope}[xshift=3.93in]
    	\draw[semithick, rounded corners=3.25mm, gray] (1.5,.68) -- (3.3,.68) -- (3.3,1.265) -- (.7,1.265) -- (.7, .68) -- (1.5,.68);
	    \draw (0.5,1) node[lStyle] {\footnotesize{$C$}};
    \end{scope}

    \end{scope}
\end{tikzpicture}
\captionsetup{width=.5\textwidth}
    \caption{
	    \textcolor{mypurple}{\textbf{---} phase 1}
	    \textcolor{mygreen}{\textbf{---} phase 2}
	    \textcolor{myblue}{\textbf{---} phase 3}
    }
    \label{fig:lemma_classes2c}
\end{figure}
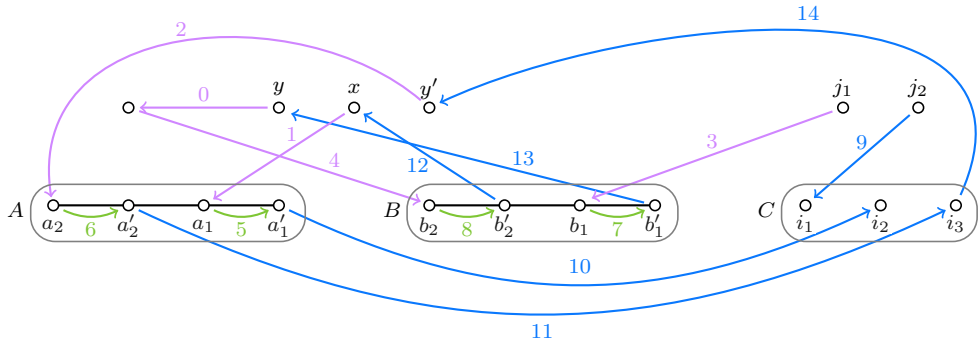

    As in the proof of \Cref{lem:locked_classes2}, we can argue using \Cref{lem:unlock} that all the vertices $Z$ involved in the 
    transformation between $I_s$ and $I'$ (except for the last vertex $x$ replaced in the sequence, which is a key vertex of $A$) 
    induce a connected subgraph.  By applying \Cref{lem:deletion2perclass_upd}(a) to $Z,A,B$, we can find independent sets 
    $\{a_1,a_2\}\subseteq A$ and $\{b_1,b_2\}\subseteq B$ that are anticomplete to each other and both anticomplete to vertices of 
    $I' \setminus I_s$.  In other words, the only vertex of $I'$ adjacent to $a_1,a_2$ (resp.~$b_1,b_2$) is the unique key vertex 
    (resp.~are the two key vertices) of $A$ (resp.~of $B$). 

    We now perform the moves $x \rightarrow a_1$, $y' \rightarrow a_2$, $j_1 \rightarrow b_1$, and then move 
    the token initially on $y$ to $b_2$. As in the proof of \Cref{lem:locked_classes2}, we cancel all these moves of $\mathcal{R}$   
    except the last, which moved $x$, by applying $\mathcal{R}^{-1}$. After this transformation, we reach the independent set $(I_s 
    \setminus \{ x,y,y',j_1 \}) \cup \{ a_1,a_2,b_1,b_2 \}$.

    By \Cref{lem:deletion2perclass}, we can find subsets of size $3$ in $A$ and $B$ that are anticomplete to each other and also 
    anticomplete to $C$. In particular, we can find independent sets $\{a_1',a_2'\}$ in $A$ and $\{b_1',b_2'\}$ in $B$ that are
    anticomplete to $\{i_1,i_2,i_3\}$. We can thus move the tokens on $\{a_1,a_2,b_1,b_2\}$ onto $\{a_1',a_2',b_1',b_2'\}$.

We finally perform the moves $j_2 \rightarrow i_1$, $a_1' \rightarrow i_2$, $a_2' \rightarrow i_3$, $b_1' \rightarrow y$, $b_2' \rightarrow x$, and $i_3 \rightarrow y'$. We have thus reached the independent set where the $I_s$-activation pair has been 
replaced by the $I$-activation pair. Since $I$ is weakly $I_s$-greedy, we can transform $I_s$ into $I$ and the conclusion follows.

\paragraph{Modification for $\bm{(2,2)}$-clean independent sets.}
For $(2,2)$-clean independent sets, we will need to use both auxiliary activation vertices. The first phase of the transformation 
is identical (we simply choose $j_1$ as the common key vertex of $C$ and $C'$). Recall that this transformation ends with 
$x\to a_1$, $y'\to a_2$, $j_1\to b_1$, and moving the token initially on $y$ to $b_2$.
When we select $a_1',a_2',b_1',b_2'$, we need to be careful. 
\smallskip

{\bf Case 1: $\bm{x\notin \{j_1,j_2,j_3\}}$.} 
Applying Lemma~\ref{lem:deletion2perclass} to $A$, $B$, and $C$ ensures that there exist sets of three vertices in each of $A,B$ 
that are anticomplete to each other and both anticomplete to $C$.
So there exist $\{a_1',a_2'\}\subseteq A$ and $\{b_1',b_2'\}\subseteq B$ that are both anticomplete to $C$. 
We perform the moves $j_2 \rightarrow i_1$ and $b_1' \rightarrow i_2$.  We consider $ A \cup B \cup x$ as a connected component $W$.
Applying \Cref{lem:deletion2perclass_upd}(b) to $W$ and $C,C'$ ensures that by deleting $4$ vertices in $W$ we remove all edges to 
$C$ and $C'$.  Recall that $A$ and $B$ each have size at least $5$.  By symmetry between $A$ and $B$, we assume that we deleted at 
most $2$ vertices of $A$; so $A$ has at least $3$ remaining (undeleted) vertices.  Trivially, $B$ has at least $1$ remaining vertex.
So we can find an independent set $\{a_1'',a_2'',b_1''\}$ that is anticomplete to $\{i_3,i_4\}$. We then perform the moves $j_3 \rightarrow i_3$, $a_1'' \rightarrow i_4$. 

\begin{figure}[!h]
    \centering
\begin{tikzpicture}[yscale=1.3, every edge quotes/.style = {outer sep=1pt, above, font=\footnotesize, draw=none, fill=white, fill opacity = .9}]
    \clip (-.5,-1.5) rectangle + (16.5,3.9);  
    \foreach \i in {1,2,3,4,6,7,8,9,11,12,13,14}
    {
        \draw  (\i,0) node (x\i) {};  
    }
    \draw (4,1) node (y) {} (5,1) node (x) {} (6,1) node (y') {} (2,1) node (unnamed) {} (11.5,1) node (j2) {} 
    (12.5,1) node (j1) {} (13.5,1) node (j3) {};

	\draw 
	(y') ++ (0,2*\offset) node[lStyle] (y'p) {} 
    (y) ++ (0,-2*\offset) node[lStyle] (yp) {}
    (x1) ++ (0,-3*\offset) node[lStyle] (x1p) {}
    (x6) ++ (3*\offset,0) node[lStyle] (x6p) {}
    (x8) ++ (0,2*\offset) node[lStyle] (x8p) {};
    \draw[thick] (x1) -- (x2) -- (x3) -- (x4)  (x6) -- (x7) -- (x8) -- (x9);

	\begin{scope}[every edge quotes/.style = {outer sep=0.2pt, below, draw=none, fill=none, font=\footnotesize}] 
    \foreach \from/\to/\lab in {x1/x2/6, x3/x4/5, x6/x7/8, x8/x9/7}
	\draw[thick, mygreen] (\from) edge[pos = .5, bend right=20, below, ->, shorten <= 0.8mm, shorten >= .8mm, "\lab"] (\to); 
     \draw[thick, myblue] (x9) edge[out = -60, in = -120, bend right=25,pos = .5, ->, shorten <= 0.8mm, shorten >= .8mm, "10"] (x12); 
     \draw[thick, myblue] (x4) edge[bend right=20, pos = .5, ->, shorten <= 0.8mm, shorten >= .8mm, "12"] (x14); 
     \draw[thick, myblue] (j3) edge[bend right=0, pos = .2, ->, shorten <= 0.8mm, shorten >= .8mm, "~~11"] (x13); 
    \end{scope}

	\begin{scope}[every edge quotes/.style = {outer sep=0.5pt, above, draw=none, fill=none, font=\footnotesize}] 
     \draw[thick, mypurple] (x) edge[pos=.4, bend right=0, ->, shorten <= 0.8mm, shorten >= .8mm, "1"] (x3); 
     \draw[thick, mypurple] (y') edge[bend right=30, out = -45, in = -90, ->, shorten <= 0.8mm, shorten >= .8mm, "2"] (x1p); 
     \draw[thick, mypurple] (j1) edge[bend right=0, ->, shorten <= 0.8mm, shorten >= .8mm, "3"] (x8p); 
     \draw[thick, mypurple] (unnamed) edge[pos = 0.7, bend right=0, ->, shorten <= 0.8mm, shorten >= .8mm, "4"] (x6p); 
     \draw[thick, mypurple] (y) edge[bend right=0, ->, shorten <= 0.8mm, shorten >= .8mm, "0"] (unnamed); 
    \end{scope}
	\begin{scope}[every edge quotes/.style = {draw=none, fill=none, right, font=\footnotesize}] 
     \draw[thick, myblue] (j2) edge[pos = .6, bend right=0, ->, shorten <= 0.8mm, shorten >= .8mm, "9~~"] (x11); 
    \end{scope}

    \foreach \where/\lab in {y'/y', x/x, y/y, j1/{j_1}, j2/{j_2}, j3/{j_3}}
	\draw (\where) ++ (0,13*\offset) node[lStyle] {\footnotesize{$\lab$}};

    \foreach \where/\lab in {x1/{a_2}, x2/{a_2'}, x3/{a_1}, x4/{a_1'}, x6/{b_2}, x7/{b_2'}, x8/{b_1}, x9/{b_1'} }
	\draw (\where) ++ (0,-14*\offset) node[lStyle, fill=none, opacity=.9, shape=rectangle] {\footnotesize{$\lab$}};
    \foreach \where/\lab in {x11/{i_1}, x12/{i_2}, x13/{i_3}, x14/{i_4}}
	\draw (\where) ++ (0,-12.5*\offset) node[lStyle, fill=white, opacity=.9, shape=rectangle] {\footnotesize{$\lab$}};
    \begin{scope}[yshift=-1.05cm]
    \draw[semithick, rounded corners=3.25mm, gray] (1.5,.68) -- (4.35,.68) -- (4.35,1.265) -- (.7,1.265) -- (.7, .68) -- (1.5,.68);
	    \draw (0.5,1) node[lStyle] {\footnotesize{$A$}};
    
    \begin{scope}[xshift=1.97in]
    	\draw[semithick, rounded corners=3.25mm, gray] (1.5,.68) -- (4.3,.68) -- (4.3,1.265) -- (.7,1.265) -- (.7, .68) -- (1.5,.68);
	    \draw (0.5,1) node[lStyle] {\footnotesize{$B$}};
    \end{scope}

    \begin{scope}[xshift=3.93in]
    	\draw[semithick, rounded corners=3.25mm, gray] (1.5,.68) -- (2.3,.68) -- (2.3,1.265) -- (.7,1.265) -- (.7, .68) -- (1.5,.68);
	    \draw (0.5,1) node[lStyle] {\footnotesize{$C$}};
    \end{scope}
    
    \begin{scope}[xshift=4.725in]
    	\draw[semithick, rounded corners=3.25mm, gray] (1.5,.68) -- (2.3,.68) -- (2.3,1.265) -- (.7,1.265) -- (.7, .68) -- (1.5,.68);
	    \draw (2.5,1) node[lStyle] {\footnotesize{$C'$}};
    \end{scope}

    \end{scope}
\end{tikzpicture}
\captionsetup{width=.4\textwidth}
    \caption{
	    \textcolor{mypurple}{\textbf{---} phase 1}
	    \textcolor{mygreen}{\textbf{---} phase 2}
	    \textcolor{myblue}{\textbf{---} phase 3}\\
        We omit the final few moves of the transition,\\ since they depend on the cases described below.
    }
    \label{fig:lemma_classes2d}
\end{figure}
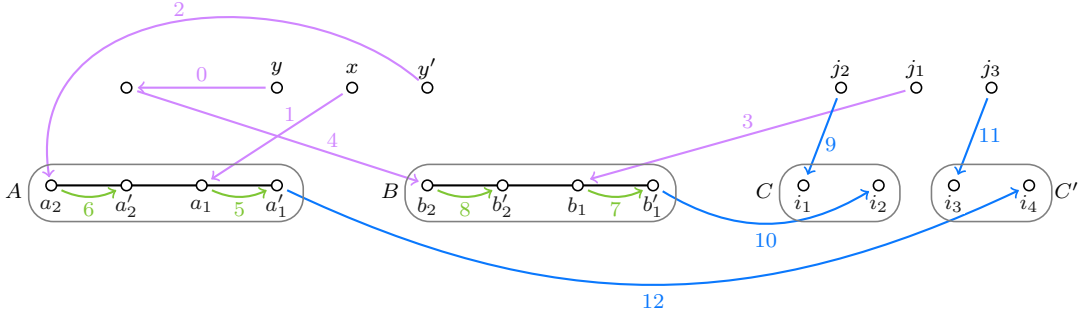

To complete the transformation we distinguish $3$ cases.
If $y,y'\notin\{j_1,j_2,j_3\}$, then we complete the tranformation with $a_2'' \rightarrow y$, $b_1''\rightarrow y'$, $i_4 \rightarrow x$, $i_3 \rightarrow j_3$.
If instead $y,y' \in \{j_1,j_2,j_3\}$, then two of the positions $i_1,i_2,i_3,i_4,a_2'',b_1''$ are empty. Since this set is an independent set, we can move tokens to guarantee that $a_2''$ and $b_1''$ are empty; that is, there is no token in $A \cup B$. 
We move $i_3\to x, i_4\to j_3$.
Finally, if instead exactly one of $y,y'$ is in $\{j_1,j_2,j_3\}$, say $y$, then we assume that $a_2''$ is not in the current 
independent set. So we move: $b_1'' \rightarrow y'$, $i_3 \rightarrow x$, and $i_4 \rightarrow j_3$. 
\smallskip

{\bf Case 2: $\bm{x \in \{j_1,j_2,j_3\}}$.} 
We again perform the first phase similarly except that if $x=j_1$, then we replace the move 
$j_1 \rightarrow b_1$ by $j_2 \rightarrow b_1$.
Applying Lemma~\ref{lem:deletion2perclass} to $A$, $B$, and $C$ ensures that there exist sets of three vertices in each of $A,B$ 
that are anticomplete to each other and both anticomplete to $C$.
So there exist $a_1',a_2'\in A$ and $b_1',b_2'\in B$ anticomplete to $C$.  Now we again shift the tokens in $A$ to $\{a_1',a_2'\}$
and shift the tokens in $B$ to $\{b_1',b_2'\}$.

If $x \in \{j_1,j_2\}$, then there is no token on $\{j_1,j_2\}$, so to complete the transformation we perform the moves $a_1' \rightarrow i_1$, $b_1' \rightarrow i_2$, $a_2' \rightarrow y$, and $b_2' \rightarrow y'$, and we are done. 
Thus we instead assume that $x=j_3$. Now we apply Lemma~\ref{lem:deletion2perclass} to $A$, $B$, and $C'$, instead of $C$. 
The above sequence transforms $I$ into $I \setminus \{j_1,j_3\} \cup \{i_3,i_4\}$. 
To complete the transformation, we add the moves $j_2 \rightarrow i_1, i_3 \rightarrow i_2$, and $i_4 \rightarrow j_3$.
\end{proof}

\bibliographystyle{habbrv}
{
\small
\bibliography{token-jumping}
}

\newpage
\appendix

\end{document}